          \DeclareSymbolFont{usualmathcal}{OMS}{cmsy}{m}{n}
          \DeclareSymbolFontAlphabet{\mathcalbf}{usualmathcal}
            \DeclareSymbolFont{usualmathcal}{OMS}{cmsy}{m}{n}
            \DeclareSymbolFontAlphabet{\mathcalbf}{usualmathcal}
            \providecommand{\diracdelta}[1][]{\ensuremath{\deltaup_{#1}}}
            \providecommand{\lap}{\ensuremath{\Deltaup}}
            \providecommand{\pic}{\ensuremath{\mathrm\pi}}
            \providecommand{\measure}[1]{\ensuremath{\mathcalbf{\uppercase{#1}}}}
            \providecommand{\mathcalbf}{\mathcal}
            \newcommand{\mathsfit}{\mathsf}
      \def\olprovideenvironment{\@star@or@long\provide@environment}
      \def\provide@environment#1{%
              \@ifundefined{#1}%
                      {\def\reserved@a{\newenvironment{#1}}}%
                      {\def\reserved@a{\renewenvironment{dummy@environ}}}%
              \reserved@a
      }
      \def\dummy@environ{}
      \colorlet{a}{magenta}
      \colorlet{b}{green!75!blue}
      \colorlet{c}{yellow!87.5!red}
      \colorlet{d}{cyan}
      \colorlet{e}{red}
      \colorlet{f}{blue}
      \colorlet{g}{white}
      \colorlet{i}{black}
      \colorlet{h}{i!50!g}
      \colorlet{j}{a!75!g}
        \providecommand{\linkedurl}[1]{\url{1}}%
        \providecommand{\linkedurl}[1]{\texttt{#1}}%
      \providecommand{\Ignore}[1]{}
      \providecommand{\ignore}[1]{}
      \providecommand{\freeze}[1]{}%
      \providecommand{\crossout}[1]{{\textcolor{i!20}{#1}}}
      \providecommand{\highlightcolor}{a}
      \providecommand{\highlight}[1]{{\color{\highlightcolor}#1}}
      \providecommand{\memo}[1]{%
        \ensuremath{%
          \framebox{\tiny\textbf{\kern-2pt\textsf{#1}}\kern-2pt}%
        }%
        \xspace}
      \newcounter{margnote}[page]
      \providecommand{\mgcolor}{a}
      \providecommand{\mgcolorset}[1]{\renewcommand{\mgcolor}{\alphalph{#1}}}
      \providecommand{\mgcolorsetbycounter}[1]{%
        \ifthenelse{\value{#1}<11}{%
          \ifthenelse{\value{#1}=0}{
            \renewcommand{\mgcolor}{j}
          }{
            \renewcommand{\mgcolor}{\alph{#1}}%
          }
        }{%
          \renewcommand{\mgcolor}{a}%
        }%
      }
      \providecommand{\mgcolormake}{\mgcolorsetbycounter{margnote}}
      \providecommand{\mgcolorstepby}[1]{
        \setcounter{tmpcounter}{\value{margnote}}%
        \addtocounter{tmpcounter}{#1}%
        \mgcolorsetbycounter{tmpcounter}%
      }%
      \providecommand{\margnotecolor}{%
        \ifthenelse{\value{margnote}=0}{%
          \mgcolorset{10}
        }{%
          \ifthenelse{\value{margnote}<7}{%
            \mgcolormake%
          }{%
            \ifthenelse{\value{margnote}=7}{\mgcolorset{10}}{%
              \ifthenelse{\value{margnote}<11}{%
                \mgcolormake%
              }{%
                \ifthenelse{\value{margnote}<17}{%
                  \mgcolorstepby{-10}%
                }{%
                  \mgcolorset{10}%
                }%
              }%
            }%
          }%
        }%
      }%
      \providecommand{\margnotemark}{{\colorbox{\mgcolor}{\tiny\color{g}\upshape\texttt{\arabic{page}.\arabic{margnote}}}}\,}
      \providecommand{\margnote}[2][]{%
        \ifthenelse{%
          \boolean{shownotes}%
        }{%
          \stepcounter{margnote}%
          \margnotecolor%
          \margnotemark %
          \marginpar{%
            \color{\mgcolor}%
            \texttt{\bfseries{%
              \begin{minipage}{2cm}%
                \raggedright\tiny%
                \margnotemark%
                #2%
                \\
                {\ifx|#1|{}\else{ - #1}\fi}%
              \end{minipage}%
              }%
            }%
          }%
        }{%
        }%
      }%
      \providecommand{\mathnote}[2][]{%
        \ifthenelse{%
          \boolean{shownotes}%
        }{%
          \stepcounter{margnote}%
          \margnotecolor%
          \text{%
            \colorbox{g}{%
              \color{\mgcolor}%
              \texttt{\bfseries{%
                \tiny%
                    \margnotemark\,%
                    #2%
                    \ifx|#1|{}\else{ - #1}\fi%
                }%
              }%
            }%
          }%
        }{%
        }%
      }%
      \providecommand{\textnote}[2][]{%
        \ifthenelse{%
          \boolean{shownotes}%
        }{%
          \stepcounter{margnote}%
          \margnotecolor%
          \ \\
          \text{%
              \begin{minipage}{\textwidth}
              \color{\mgcolor}%
              \texttt{%
                \margnotemark%
                #2%
                \ifx|#1|{}\else{ - #1}\fi%
              }%
              \end{minipage}
          }%
        }{}%
      }%
      \providecommand{\Todo}[2][To do:]{
        \ifthenelse{\boolean{shownotes}}{
          \begin{tikzpicture}
           \node[fill=a!17]{
             \begin{minipage}{\textwidth}
               \tiny
               \texttt{#1}
               \texttt{\bfseries{#2}}
             \end{minipage}
           };
          \end{tikzpicture}
        }{}}
      \newcommand{\revisionsheader}{\ \clearpage\Warning{the following part is under development/revision}}
      \newcommand{\revisionsfooter}{\ \newline\Warning{end of part under development/revision}\clearpage}
      \providecommand{\HighlightBox}[2][a!6.25]{
        \begin{center}
          \begin{tikzpicture}
            \node[fill=#1]{
              \begin{minipage}{\textwidth}
                #2
              \end{minipage}
            };
          \end{tikzpicture}
        \end{center}
      }
      \providecommand{\Warning}[1]{    
        \HighlightBox[b!25]{%
          \texttt{\bfseries{\small Warning: #1}}
        }
      }
      \providecommand{\margincomment}[1]{
      \ifthenelse{\boolean{showcomments}}{\marginpar{\tiny #1}}{}
      }
      \providecommand{\changes}[2][]{%
        \ifthenelse{\boolean{showchanges}}{{%
            \ifx|#1|{}\else\margnote{#1}\fi%
            \highlight{#2}%
        }}{%
          #2}}
      \providecommand{\mathchanges}[2][]{%
        \ifthenelse{\boolean{showchanges}}{{\ifx|#1|{}\else\mathnote{#1}\fi\highlight{#2}}}{#2}}
      \providecommand{\changefromto}[3][replace with]{%
        \ifthenelse{\boolean{showchanges}}{%
          {\crossout{#2}\margnote{#1}}{\highlight{#3}}}{%
          #3\xspace}%
      }
      \providecommand{\ChangePar}[3][]{%
        \ifthenelse{\boolean{showchanges}}{
          {\par\textcolor{i!20}{#2}\ifx|#1|\else\margnote{#1}\fi}{\par\textcolor{a}{#3}}
        }{%
          \par #3%
        }%
      }
      \providecommand{\InsertPar}[1]{
        \ifthenelse{\boolean{showchanges}}
        {{\par$\mapsto$ \textcolor{blue}{#1}}}
        {\par #1}
      }
      \providecommand{\mathchangefromto}[3][]{\crossout{#2}\ifx|#1|\else\mathnote{#1}\fi\highlight{#3}}
      \let\trueMakeUppercase\MakeUppercase
      \newcommand{\UCmath}[1]{%
        \begingroup
        \ucmathlist\trueMakeUppercase{#1}%
        \endgroup
      }
        \newcommand{\ucmathlist}{%
          \def\alpha{A}%
          \def\beta{B}%
          \let\gamma\Gamma
          \let\delta\Delta
          \def\epsilon{E}%
          \def\varepsilon{E}%
          \def\zeta{Z}%
          \def\eta{H}%
          \let\theta\Theta
          \let\vartheta\Theta
          \def\iota{I}%
          \def\kappa{K}%
          \let\lambda\Lambda
          \def\mu{M}%
          \def\nu{N}%
          \let\xi\Xi
          \def\omicron{O}
          \let\pi\Pi
          \let\varpi\Pi
          \def\rho{P}%
          \def\varrho{P}%
          \let\sigma\Sigma
          \def\varsigma{C}
          \def\tau{T}%
          \let\upsilon\Upsilon
          \let\phi\Phi
          \let\varphi\Phi
          \def\chi{X}%
          \let\psi\Psi
          \let\omega\Omega
      }}{
        \newcommand{\ucmathlist}{
          \def\alpha{\mathrm{A}}%
          \def\beta{\mathrm{B}}%
          \let\gamma\Gamma
          \let\delta\Delta
          \def\epsilon{\mathrm{E}}%
          \def\varepsilon{\mathrm{E}}%
          \def\zeta{\mathrm{Z}}%
          \def\eta{\mathrm{H}}%
          \let\theta\Theta
          \let\vartheta\Theta
          \def\iota{\mathrm{I}}%
          \def\kappa{\mathrm{K}}%
          \let\lambda\Lambda
          \def\mu{\mathrm{M}}%
          \def\nu{\mathrm{N}}%
          \let\xi\Xi
          \let\pi\Pi
          \let\varpi\Pi
          \def\rho{\mathrm{P}}%
          \def\varrho{\mathrm{P}}%
          \let\sigma\Sigma
          \def\tau{\mathrm{T}}%
          \let\upsilon\Upsilon
          \let\phi\Phi
          \let\varphi\Phi
          \def\chi{\mathrm{X}}%
          \let\psi\Psi
          \let\omega\Omega
        }
      \providecommand{\mathscript}
      	       {\mathscr}
      \providecommand{\cD}{\ensuremath{\mathscript D}\xspace}
      \providecommand{\hA}{\ensuremath{\mathcalbf A}\xspace}
      \providecommand{\bbbold}{\mathbb}
      \providecommand{\rN}{\ensuremath{\bbbold N}\xspace}
      \providecommand{\rP}{\ensuremath{\bbbold P}\xspace}
      \providecommand{\rR}{\ensuremath{\bbbold R}\xspace}
      \providecommand{\rT}{\ensuremath{\bbbold T}\xspace}
      \providecommand{\rZ}{\ensuremath{\bbbold Z}\xspace}
      \providecommand{\Ae}[1][]{\ensuremath{\ifx|#1|{\ }\else{\:#1\text{-}}\fi\text{almost everywhere }}\xspace}
      \providecommand{\Aa}[1][]{\ensuremath{\text{ for }\ifx|#1|{}\else{\:#1\text{-}}\fi\text{almost all }}}
      \providecommand{\as}[1][]{\ensuremath{\ifx|#1|{\ }\else{#1\text{-}}\fi\text{almost surely}}\xspace}
      \providecommand{\aposteriori}{aposteriori\xspace}
      \providecommand{\apriori}{{apriori}\xspace}
       \providecommand{\naturals}{\ensuremath{\rN}}
       \providecommand{\NO}[1][]{\ensuremath{\naturals_0\ifx|#1|{}\else^{#1}\fi}}
       \providecommand{\integers}{\rZ}
       \providecommand{\reals}{\rR}
       \providecommand{\R}[1]{\reals^{#1}}
       \providecommand{\fieldmats}[3][F]{\csname#1\endcsname{#2\times#3}}
       \providecommand{\fieldtens}[3][F]{\csname#1\endcsname{{#2}_1\times\dotsb\times{#2}_{#3}}}
       \providecommand{\realmats}[2]{\fieldmats[R]{#1}{#2}}
       \providecommand{\realsqmats}[1]{\realmats{#1}{#1}}
       \providecommand{\RO}[1][]{{\reals_{0+}\ifx|#1|{}\else^{#1}\fi}}
       \providecommand{\RP}[1][]{{\reals_+\ifx|#1|\else^{#1}\fi}}
       \providecommand{\ring}[1][A]{\csname r#1\endcsname}
       \providecommand{\field}[1][K]{\csname r#1\endcsname}
       \providecommand{\torus}[1]{\rT\ifthenelse{\equal{#1}1}{}{^#1}}
       \providecommand{\one}{\ensuremath{\bbbold 1}}
       \providecommand{\zerofun}{\ensuremath{\bbbold 0}}
       \providecommand{\ones}[1][]{\one\ifx|#1|\else_{#1}\fi}
       \providecommand{\zeros}[1][]{\zerofun\ifx|#1|\else_{#1}\fi}
       \providecommand{\diracdelta}[1][]{\ensuremath{{\mathrm{\delta}}\ifx|#1|{}\else_{#1}\fi}}
       \providecommand{\pic}{\ensuremath{\mathrm\pi}}%
       \providecommand{\pifracl}[2][]{\fracl{\ifx|#1|\else#1\fi\pic}{#2}}
       \providecommand{\pifrac}[2][]{\frac{\ifx|#1|\else#1\fi\pic}{#2}}
       \providecommand{\take}{\smallsetminus}
       \providecommand{\takesetof}[1]{\take\setof{#1}}
       \providecommand{\takeset}\takesetof
       \providecommand{\takeel}\oldneg%
       \providecommand{\inner}{\cdot}
       \providecommand{\vecprod}{\times}
       \providecommand{\outerp}{\wedge}
       \providecommand{\frobinner}{\!:\!}
       \providecommand{\W}{\ensuremath{\varOmega}\xspace}
       \providecommand{\w}{\ensuremath{\omega}\xspace}
       \providecommand{\qp}[2][]{\ensuremath{\ifx|#1|\left(\else\csname#1\endcsname(\fi{#2}\ifx|#1|\right)\else\csname#1\endcsname)\fi}}
       \providecommand{\qpreg}[1]{\ensuremath{(#1)}}
       \providecommand{\qpbig}[1]{\qp[big]{#1}}%
       \providecommand{\qpBig}[1]{\ensuremath{\Big(#1\Big)}}
       \providecommand{\qpbigg}[1]{\ensuremath{\bigg(\!#1\!\bigg)}}
       \providecommand{\qpBigg}[1]{\ensuremath{\Bigg(\!#1\!\Bigg)}}
       \providecommand{\qb}[2][]{\ifx|#1|\left[\else\csname#1\endcsname[\fi{#2}\ifx|#1|\right]\else\csname#1\endcsname]\fi}
       \providecommand{\qc}[2][]{\ensuremath{\ifx|#1|\left\{\else\csname#1\endcsname\{\fi{#2}\ifx|#1|\right\}\else\csname#1\endcsname\}\fi}}
       \providecommand{\qa}[2][]{\ifx|#1|\left\langle\else\csname#1\endcsname\langle\fi{#2}\ifx|#1|\right\rangle\else\csname#1\endcsname\langle\fi}%
       \providecommand{\qareg}[1]{\ensuremath{\langle#1\rangle}}
       \providecommand{\qabig}[1]{\ensuremath{\big\langle#1\big\rangle}}
       \providecommand{\qaBig}[1]{\ensuremath{\Big\langle#1\Big\rangle}}
       \providecommand{\qabigg}[1]{\ensuremath{\bigg\langle#1\bigg\rangle}}
       \providecommand{\qaBigg}[1]{\ensuremath{\Bigg\langle#1\Bigg\rangle}}
       \providecommand{\opinter}[2]{\ensuremath{\left(#1,#2\right)}\xspace}
       \providecommand{\opclinter}[2]{\ensuremath{\left(#1,#2\right]}\xspace}
       \providecommand{\clopinter}[2]{\ensuremath{\left[#1,#2\right)}\xspace}   
       \providecommand{\opintertopinfty}[1]{\opinter{#1}\infty}
       \providecommand\optyinter\opintertopinfty
       \providecommand{\opinterbotinfty}[1]{\opinter{-\infty}{#1}}
       \providecommand\tyopinter\opinterbotinfty
       \providecommand{\clintertopinfty}[1]{\clopinter{#1}\infty}
       \providecommand{\cltyinter}\clintertopinfty
       \providecommand{\clinterbotinfty}[1]{\opclinter{-\infty}{#1}}
       \providecommand{\tyclinter}\clinterbotinfty
       \providecommand{\expp}[1]{\ensuremath{\e^{#1}}}
       \providecommand{\compowqp}[2]{\ensuremath{\qp{\!#2\!\!}^{\kern -.4em #1}\!}}
       \providecommand{\powqpreg}[2]{\ensuremath{%
           \qpreg{#2}^{\kern 0em\lower .1ex\hbox{\scriptsize $#1$}}\kern-.3em}}
       \providecommand{\powqpbig}[2]{\ensuremath{%
           \qpbig{#2}^{\kern -.2em\lower .3ex\hbox{\scriptsize $#1$}}\kern-.3em}}
       \providecommand{\powqpBig}[2]{\ensuremath{%
           \qpBig{#2}^{\kern -.2em\lower .3ex\hbox{\scriptsize $#1$}}\kern-.3em}}
       \providecommand{\powqpbigg}[2]{\ensuremath{%
           \qpbigg{#2}^{\kern -.2em\lower .3ex\hbox{\scriptsize $#1$}}\kern-.3em}}
       \providecommand{\powqpBigg}[2]{\ensuremath{%
           \qpBigg{#2}^{\kern -.2em\lower .3ex\hbox{\scriptsize $#1$}}}}
       \providecommand{\powp}[3][]{{#3}\ifx|#1|^{#2}\else{#1}^{#2}\fi}%
       \providecommand{\pow}[2][]{\ifx|#1|\operatorname{pow}^{#2}\else\powp{#2}{#1}\fi}%
       \providecommand{\ppow}[3][]{\powp[#1]{#3}{#2}}
       \providecommand{\norm}[2][]{\ifx|#1|\left|\else\csname#1\endcsname|\fi#2\ifx|#1|\right|\else\csname#1\endcsname|\fi}
       \providecommand{\normon}[3][]{\norm[#1]{#2}_{#3}}
       \providecommand{\abs}[2][]{\ensuremath{\ifx|#1|{\left|#2\right|}\else{\csname#1\endcsname|{#2}\csname#1\endcsname|}\fi}}
       \providecommand{\Norm}[2][]{\ifx|#1|\left\|\else\csname#1\endcsname\|\fi{#2}\ifx|#1|\right\|\else\csname#1\endcsname\|\fi}
       \providecommand{\Normon}[3][]{\Norm[#1]{#2}_{#3}}
       \providecommand{\normonsob}[5][]{\normon[#1]{#2}{\sob{#3}{#4}\if|#5|{}\else(#5)\fi}}
       \providecommand{\Normonsob}[5][]{\Normon[#1]{#2}{\sob{#3}{#4}\if|#5|{}\else(#5)\fi}}
       \providecommand{\normonsobh}[4][]{\normon[#1]{#2}{\sobh{#3}\if|#4|{}\else(#4)\fi}}
       \providecommand{\normonsobhz}[4][]{\normon[#1]{#2}{\sobhz{#3}\if|#4|{}\else(#4)\fi}}
       \providecommand{\Normonsobh}[4][]{\Normon[#1]{#2}{\sobh{#3}\if|#4|{}\else(#4)\fi}}
       \providecommand{\Normonsobhz}[4][]{\Normon[#1]{#2}{\sobhz{#3}\if|#4|{}\else(#4)\fi}}
       \providecommand{\Normonleb}[4][]{\Normon[#1]{#2}{\leb{#3}\if|#4|\else(#4)\fi}}
       \providecommand{\ltwop}[3][]{\ensuremath{\qa{#2,#3}\ifx|#1|\else_{#1}\fi}}
       \providecommand{\ltwopreg}[2]{\ensuremath{\qareg{#1,#2}\ifx|#1|\else_{#1}\fi}}
       \providecommand{\ltwopbig}[2]{\ensuremath{\qabig{#1,#2}\ifx|#1|\else_{#1}\fi}}
       \providecommand{\ltwopBig}[2]{\ensuremath{\qaBig{#1,#2}\ifx|#1|\else_{#1}\fi}}
       \providecommand{\ltwopbigg}[2]{\ensuremath{\qabigg{#1,#2}\ifx|#1|\else_{#1}\fi}}
       \providecommand{\ltwopBigg}[2]{\ensuremath{\qaBigg{#1,#2}\ifx|#1|\else_{#1}\fi}}
       \providecommand{\average}[2][]{{\qa{#2}\ifx|#1|\else_{#1}\fi}}
       \providecommand{\ensemble}[2]{\ensuremath{\left\{ #1:\;#2 \right\}}}
       \providecommand{\setofsuch}{\ensemble}%
       \providecommand{\ceil}[1]{\ensuremath{\left\lceil{#1}\right\rceil}}
       \providecommand{\setof}[1]{{\qc{#1}}}
       \providecommand{\pair}[2]{\qp{#1,#2}}
       \providecommand{\triple}[3]{\qp{#1,#2,#3}}
       \providecommand{\conditionalto}[1]{{\left|{#1}\right.}}
      \providecommand{\measure}[1]{\ensuremath{\mathcalbf{\MakeUppercase{#1}}}}
      \providecommand{\probmeasure}[2][]{{\measure{#2}}\ifx|#1|\else_{#1}\fi}
      \providecommand{\Prob}{}
      \renewcommand{\Prob}[1][]{\probmeasure[{#1}]{p}}
      \providecommand{\randvars}[1][\Prob]{\operatorname{RV}\ifx|#1|{}\else{(#1)}\fi}
      \providecommand{\discrandvars}[1][\Prob]{\operatorname{DRV}\ifx|#1|{}\else{({#1)}\fi}} 
      \providecommand{\contrandvars}[1][\Prob]{\ensuremath{\operatorname{CDRV}\ifx|#1|{}\else(#1)\fi}} 
       \def\env@matrix{\hskip -\arraycolsep
        \let\@ifnextchar\new@ifnextchar
        \array{*\c@MaxMatrixCols c}}
       \renewcommand*\env@matrix[1][c]{\hskip -\arraycolsep
         \let\@ifnextchar\new@ifnextchar
         \array{*\c@MaxMatrixCols #1}}
       \providecommand{\irow}[2]{#1_{#2}}%
       \providecommand{\icol}[2]{#1^{#2}}%
       \providecommand{\ijrowcol}[3]{\icol{\irow{#1}{#2}}{#3}}
       \providecommand{\entry}[1]{\qb{#1}}
       \providecommand{\vecentry}[2]{\irow{#1}{#2}}
       \providecommand{\colvecentry}\vecentry
       \providecommand{\covecentry}[2]{\icol{#1}{#2}}
       \providecommand\rowvecentry\covecentry
       \providecommand{\rowof}[1]{\qb{#1}}
       \providecommand{\getentryi}[2]{\irow{\entry{#1}}{#2}}
       \providecommand{\getcolentry}\getentryi
       \providecommand{\getvecentry}[2]{\getentryi{\vec #1}{#2}}
       \providecommand{\discolvecitwo}[1]{\discolvectwo{\vecentry{#1}1}{\vecentry{#1}2}}
       \providecommand{\discolvecintwo}\discolvecitwo%
       \providecommand{\dismatof}[2][r]{\begin{bmatrix}[#1]#2\end{bmatrix}}
       \providecommand{\matentry}[3]{\ijrowcol{#1}{#2}{#3}}
       \providecommand{\block}[5]{\ijrowcol{#1}{\ifx#2#3{\rowof{#2}}\else\rowof{{#2}\dotsc{#3}}\fi}{\ifx#4#5{\rowof{#4}}\else\rowof{{#4}\dotsc{#5}}\fi}}
       \providecommand{\colblock}[3]{\getvecentry{#1}{\ifx#2#3{#2}\else\fromto{#2}{#3}\fi}}
       \providecommand{\dismatskeldots}[4]{
         \dismatof[c]{
           #1&\dotsc&#3
           \\
           \vdots & \ddots &\vdots
           \\
           #2&\dotsc&#4
         }
       }
       \providecommand{\dismatcommfromtofromto}[5]{
         \dismatskeldots{#1#2#4}{#1#3#4}{#1#2#5}{#1#3#5}
       }
       \providecommand{\dismatcustfromtofromto}[6][matentry]{
         \dismatcommfromtofromto{\csname#1\endcsname{#2}}#3#4#5#6
       }
       \providecommand{\dismatcustfromtofromto}[6][matentry]{
         \dismatskeldots{%
           \csname#1\endcsname{#2}{#3}{#4}%
         }{%
           \csname#1\endcsname{#2}{#3}{#6}%
         }{%
           \csname#1\endcsname{#2}{#5}{#4}%
         }{%
           \csname#1\endcsname{#2}{#5}{#6}%
         }%
       }%
       \providecommand{\dismatcustfromtofromto}[6][matentry]{
         \dismatof{
           \csname#1\endcsname{#2}{#3}{#4}&\dotsc&\csname#1\endcsname{#2}{#3}{#6}
           \\
           \vdots & \ddots &\vdots
           \\
           \csname#1\endcsname{#2}{#5}{#4}&\dotsc&\csname#1\endcsname{#2}{#5}{#6}
         }
       }
       \providecommand{\dissysaxbdotsnm}[5]{\begin{matrix}[r]%
           \matentry{#1}11\vecentry{#2}1&+\dotsb&+\matentry{#1}1{#5}\vecentry{#2}{#5}
           &
           =
           \ifx|#3|0\else{\vecentry {#3}1}\fi
           \\
           \dotsb
           \\
           \matentry{#1}{#4}1\vecentry{#2}1&+\dotsb&+\matentry{#1}{#4}{#5}\vecentry{#2}{#5}
           &
           =
           \ifx|#3|0\else{\vecentry {#3}{#4}}\fi
       \end{matrix}}
       \providecommand{\seqof}[1]{\qp{#1}}%
       \providecommand{\seqs}[2]{\seqof{#1}_{#2}}
       \providecommand{\sets}[2]{\setof{#1}_{#2}}%
       \providecommand{\seqi}[3][]{\seqs{#2_{#3}}{\ifx|#1|{#3}\else{{#3}\in{#1}}\fi}}%
       \providecommand{\sequ}[3][]{\seqs{#2^{#3}}{\ifx|#1|{#3}\else{{#3}\in{#1}}\fi}}%
       \providecommand{\subseqi}[4][]{\seqs{#2_{{#3}_{#4}}}{\ifx|#1|{#4}\else{{#4}\in{#1}}\fi}}%
       \providecommand{\seqsinat}[2]{\seqsi{#1}{#2}{\naturals}}
       \providecommand{\seti}[3][]{\sets{#2_{#3}}{\ifx|#1|_{#3}\else_{{#3}\in{#1}}\fi}}%
       \providecommand{\setu}[3][]{\sets{#2^{#3}}{\ifx|#1|{#3}\else{{#3}\in{#1}}\fi}}%
       \providecommand{\seqsi}[3]{\seqi[#3]{#1}{#2}}
       \let\liminf\relax
       \DeclareMathOperator*{\liminf}{liminf}
       \let\limsup\relax
       \DeclareMathOperator*{\limsup}{limsup}
       \providecommand{\limofat}[3][]{\ensuremath{\lim_{\ifx|#1|{}\else{#1\ni}\fi#3}{#2}}}
       \providecommand{\limsupofat}[3][]{\ensuremath{\limsup_{\ifx|#1|{}\else{#1\ni}\fi#3}{#2}}}
       \providecommand{\liminfofat}[3][]{\ensuremath{\liminf_{\ifx|#1|{}\else{#1\ni}\fi#3}{#2}}}
       \providecommand{\stringdotsfrom}[3][]{\ensuremath{#2\ifx|#1|\else#1\fi\,#3\ifx|#1|\else#1\fi\,\dotsc}}
       \providecommand{\listdotsfrom}[3][]{\ensuremath{#2\ifx|#1|\else#1\fi,#3\ifx|#1|\else#1\fi,\dotsc}}
       \providecommand{\stringdotsfromto}[3][]{\ensuremath{#2\ifx|#1|\else#1\fi\,\dotsc\,#3\ifx|#1|\else#1\fi}}
       \providecommand{\listdotsfromto}[3][]{\ensuremath{#2\ifx|#1|\else#1\fi,\dotsc,#3\ifx|#1|\else#1\fi}}
       \providecommand{\listifromto}[5][]{\ensuremath{{#2}_{#3}\ifx|#1|\else#1\fi},\text{ for }\ensuremath{\rangefromto{#3}{#4}{#5}}\xspace}
       \providecommand{\listufromto}[5][]{\ensuremath{{#2}^{#3}\ifx|#1|\else#1\fi},\text{ for }\ensuremath{\rangefromto{#3}{#4}{#5}}\xspace}
       \providecommand{\listitwo}[2][]{\ensuremath{#2_1\ifx|#1|\else#1\fi,#2_2\ifx|#1|\else#1\fi}}
       \providecommand{\listutwo}[2][]{\ensuremath{#2^1\ifx|#1|\else#1\fi,#2^2\ifx|#1|\else#1\fi}}
       \providecommand{\listithree}[2][]{\ensuremath{#2_1\ifx|#1|\else#1\fi,#2_2\ifx|#1|\else#1\fi,#2_3\ifx|#1|\else#1\fi}}
       \providecommand{\listithreez}[2][]{\ensuremath{#2_0\ifx|#1|\else#1\fi,#2_1\ifx|#1|\else#1\fi,#2_2\ifx|#1|\else#1\fi}}
       \providecommand{\listifourz}[2][]{\ensuremath{#2_0\ifx|#1|\else#1\fi,#2_1\ifx|#1|\else#1\fi,#2_2\ifx|#1|\else#1\fi,#2_3\ifx|#1|\else#1\fi}}
       \providecommand{\listuthree}[2][]{\ensuremath{#2^1\ifx|#1|\else#1\fi,#2^2\ifx|#1|\else#1\fi,#2^3\ifx|#1|\else#1\fi}}
       \providecommand{\sums}[2]{\ensuremath{\sum_{#1\in#2}}}
       \providecommand{\jump}[2][]{\ensuremath{\left\llbracket #2\right\rrbracket\ifx|#1|{}\else_{#1}\fi}}
       \providecommand{\fromto}[2]{\ensuremath{\setof{#1\dotsc#2}}}%
       \providecommand{\integerbetween}[2]{\ensuremath{={#1},\dotsc,{#2}}}
       \providecommand{\rangefromto}[3]{\ensuremath{#1\integerbetween{#2}{#3}}}
       \providecommand{\e}{\ensuremath{\operatorname{e}\!}\xspace}
       \providecommand{\ic}{\ensuremath{\operatorname{i}}\xspace}
       \providecommand{\d}{}
       \renewcommand{\d}[1][]{\ensuremath{\operatorname{d}\!\ifx|#1|\else{_{#1}}\fi}}
       \providecommand{\ds}[1][]{\d{\measure S}}
       \providecommand{\D}[1][]{\ensuremath{\operatorname{D}\!\ifx|#1|\else{_{#1}}\fi}}
      \providecommand{\registered}%
      {\ensuremath{^\text{\textregistered}}}
      \providecommand{\tand}{\ensuremath{\text{ and }}}
      \providecommand{\ballname}{\operatorname{B}}              %
      \providecommand{\Ball}[3][]{\ballname_{{#3}}^{#1}\!\qp{{#2}}}
      \providecommand{\ball}{\Ball}
      \providecommand{\card}{\ensuremath{\#}}
      \providecommand{\constant}[1]{\ensuremath{C_{#1}}}
      \providecommand{\constext}[2][]{\constant{\textup{#2}{\ifx|#1|{}\else{,\ensuremath{#1}}\fi}}}            %
      \providecommand{\constref}[2][]{\ensuremath{\constant{\textup{\ref{#2}{\ifx|#1|{}\else{,\ensuremath{#1}}\fi}}}}}
      \providecommand{\constdef}[2][]{\label{#2}\ensuremath{\constant{\textup{\ref{#2}{\ifx|#1|{}\else{,\ensuremath{#1}}\fi}}}}}
      \providecommand{\funkref}[3][]{\ensuremath{{#3}_{\textup{\ref{#2}{\ifx|#1|{}\else{,\ensuremath{#1}}\fi}}}}}
      \providecommand{\diam}{\operatorname{diam}}
      \providecommand{\curl}{\operatorname{curl}}
      \renewcommand{\curl}[1][]{\nabla\ifx|#1|{}\else\kern-2pt_{#1}\fi\kern-2pt\vecprod}
      \renewcommand{\div}[1][]{\nabla\ifx|#1|{}\else\kern-2pt_{#1}\fi\kern-1pt\inner}
      \providecommand{\divof}[2][]{\div[#1]\ifx|#2|{}\else\qb{#2}\fi}%
      \providecommand{\divideabyb}[2]{\operatorname{div}(a,b)}
      \providecommand{\grad}{}
      \renewcommand{\grad}[1][]{\nabla\ifx|#1|\else_{#1}\fi}
      \providecommand{\rot}[1][]{\nabla\ifx|#1|\else_{#1}\fi\outerp}
      \providecommand{\rowdiv}[1][]{\D\ifx|#1|{}\else\kern-1pt_{#1}\kern-2pt\fi\cdot}
      \providecommand{\rowdivof}[2][]{\rowdiv[#1]\ifx|#2|{}\else\qb{#2}\fi}
      \providecommand{\inv}[1][]{\operatorname{inv}\ifx|#1|\else^{#1}\fi}
      \providecommand{\ivt}[1]{\operatorname{ivt}\ifx|#1|\else^{#1}\fi}
      \providecommand\tensorinvariant\ivt
      \providecommand{\inverse}[2][]{\powp[#1]{-1}{#2}}
      \providecommand{\inverseqp}[1]{\inverse{\qp{#1}}}
      \providecommand{\inverseof}[1]{\inverseqp{#1}}
      \providecommand{\mod}{}
      \renewcommand{\mod}[1][]{\operatorname{mod}\ifx|#1|\else\kern-1pt_{#1}\fi}
      \let\oldfrac\frac
      \renewcommand{\frac}[3][]{\ifx|#1|\oldfrac{#2}{#3}\else\begin{array}{#1}{#2}\\\hline{#3}\end{array}\fi}
      \providecommand{\fracl}[3][]{\ifx|#1|\nicefrac{#2}{#3}\else{#2}#1/{#3}\fi}
      \providecommand{\qpfracl}[3][]{\qp{\ifx|#1|\fracl{#2}{#3}\else{#2}#1/{#3}\fi}}
      \providecommand{\qpfrac}[3][]{\qp{\ifx|#1|\frac{#2}{#3}\else{#2}#1/{#3}\fi}}
      \providecommand{\absfracl}[3][]{\abs{\ifx|#1|\fracl{#2}{#3}\else{#2}#1/{#3}\fi}}
      \providecommand{\absfrac}[3][]{\abs{\ifx|#1|\frac{#2}{#3}\else{#2}#1/{#3}\fi}}
      \providecommand{\fraclff}[3][]{\ifx|#1|{#2}/{#3}\else{#1}\fracl{#2}{#3}\fi}
      \providecommand{\eye}[1][]{\vec{\mathrm I}\ifx|#1|{}\else_{#1}\fi}%
      \providecommand{\numeye}[1][]{\boldsymbol{\mathsf{I}}\ifx|#1|{}\else_{#1}\fi}%
      \providecommand{\Eye}[1]{
        \begin{bmatrix}
        \ifthenelse{#1>1}{
          \ifthenelse{#1>2}{
            \ifthenelse{#1>3}{
              \ifthenelse{#1>4}{
                1&\zeroentry&\dotso&\zeroentry
                \\
                \zeroentry&1&\dotso&\zeroentry
                \\
                \vdots&\vdots&\ddots&\vdots
                \\
                \zeroentry&\zeroentry&\dotso&1
              }{        
                1&\zeroentry&\zeroentry&\zeroentry
                \\
                \zeroentry&1&\zeroentry&\zeroentry
                \\
                \zeroentry&\zeroentry&1&\zeroentry
                \\
                \zeroentry&\zeroentry&\zeroentry&1
              }
            }{
              1&\zeroentry&\zeroentry
              \\
              \zeroentry&1&\zeroentry
              \\
              \zeroentry&\zeroentry&1
            }
          }{
            1&\zeroentry
            \\
            \zeroentry&1
          }
        }{
          1
        }
        \end{bmatrix}
      }
      \providecommand{\lebmeas}[1][]{\measure L^{#1}}     %
      \providecommand{\lebmeasof}[2][]{\ifx|#1|\left|#2\right|\else\lebmeas[#1]\qp{#2}\fi}         %
      \providecommand{\meshsize}[1][]{h\ifx|#1|\else_{#1}\fi}
      \providecommand{\maxi}[2]{#1\vee#2}                       %
      \providecommand{\mini}[2]{#1\wedge#2}                     %
      \providecommand{\argmin}{\operatorname{argmin}\nolimits}
      \providecommand{\argmax}{\operatorname{argmax}\nolimits}
      \providecommand{\Argmax}{\operatorname{Argmax}\nolimits}
      \let\oldneg\neg
      \renewcommand{\neg}[1]{\left[#1\right]_-}
      \providecommand{\dash}[1][']{\ifthenelse{\equal{#1}{'}\OR\equal{#1}{''}}{#1}{^{(#1)}}}
      \providecommand{\pdfrac}[2][]{\ensuremath{\frac{\partial\ifx|#1|\phantom{#2}\else{#1}\fi}{\partial{#2}}}} %
      \providecommand{\pdfracpow}[3][]{\ensuremath{\frac{\partial^{#3}\ifx|#1|\phantom{#2}\else{#1}\fi}{\partial{#2}^{#3}}}} %
      \providecommand{\pd}[2][]{\ensuremath{\partial_{#2}}{\ifx|#1|{}\else{\qb{#1}}\fi}} %
      \providecommand{\dd}[2][]{\ensuremath{\ifx|#1|\frac{\d}{\d{#2}}\else\frac[l]{\d{#1}}{\d{#2}}\fi}}    %
      \renewcommand{\Im}{\operatorname{im}}                 %
      \renewcommand{\Re}{\operatorname{re}}                 %
      \providecommand{\imaginpart}[1][]{\Im{\ifx|#1|{}\else\qp{#1}\fi}} %
      \providecommand{\realpart}[1][]{\Re{\ifx|#1|{}\else\qp{#1}\fi}} %
      \providecommand\determinant\det
      \providecommand{\trace}{\operatorname{tra}}             %
      \providecommand{\traceof}[1]{\trace\qp{#1}}             %
      \providecommand{\transpose}{\intercal}%
      \providecommand{\Transpose}[1]{\ensuremath{{#1}^{\transpose}}}
      \providecommand{\Transposemat}[1]{\Transpose{\mat{#1}}}
      \providecommand{\transposemat}[1]{\Transposemat{#1}}
      \providecommand{\orthogonalto}[1][]{\ensuremath{\perp\ifx|#1|{}\else{_{#1}}\fi}}
      \providecommand{\rowof}[1]{\ensuremath{\vecof{#1}}}
      \providecommand{\discolvec}[2][r]{\ensuremath{\begin{bmatrix}[#1]#2\end{bmatrix}}}
      \providecommand{\discolvectwo}[3][r]{\ensuremath{\discolvec[#1]{#2\\#3}}}
      \providecommand{\discolvecitwo}[1]{\discolvectwo{\vecentry{#1}1}{\vecentry{#1}2}}
      \providecommand{\zeroentry}{\ifthenelse{\boolean{showzeroentries}}{{0}}{\phantom0}}
      \providecommand{\zeroentrywarning}{\ifthenelse{\boolean{showzeroentries}}{}{%
          \ensuremath{\text{($0$ entries omitted)\xspace}}}}
      \providecommand{\smint}{\ensuremath{{\text{\textbf{/}}}\kern-.75em\smallint}}
      \renewcommand{\smint}[1][]{\lower12.3pt\hbox{\begin{tikzpicture}\draw[line width=.75pt] (-3pt,-0.5)--(1pt,-0.5) node[pos=0.6]{$\int$};\path (3pt,-24pt)node {\scriptsize $#1$};\end{tikzpicture}}}
      \providecommand{\lap}{\ensuremath{\Delta}}
      \providecommand{\lapin}[1][]{\lap\ifx|#1|\else_{#1}\fi}
      \providecommand{\normalsymbol}{\operatorname{\mathbf{n}}}
      \renewcommand{\normalsymbol}{\operatorname{\mathbf{n}}}
      \providecommand{\normal}[1][]{\normalsymbol\ifx|#1|\else_{#1}\fi}%
      \providecommand{\normalto}[2][]{\ensuremath{\normal[#2]\ifx|#1|\else\qp{#1}\fi}}
      \providecommand{\normalder}[1][]{\ensuremath{\normal\ifx|#1|\else\qp{#1}\fi{\inner\grad}}}
      \providecommand{\tangentialsymbol}{\operatorname{\textbf{t}}}
      \providecommand{\tangentialto}[2][]{\tangentialsymbol\ifx|#1|\else^{#1}\fi\ifx|#2|\else_{#2}\fi}
      \providecommand{\intersected}{\ensuremath{\cap}}
      \providecommand{\meet}{\intersected}
      \providecommand{\union}[1]{\ensuremath{\bigcup\nolimits_{#1}}}
      \providecommand{\unions}[3][]{\union{#2\in{#3}\ifx|#1|\else:#1\fi}}
      \providecommand{\powersetof}[1]{\ensuremath{2^{#1}}}
        \let\vec\undefined
        \providecommand{\vec}[1]{\ensuremath{\boldsymbol{#1}}}
        \renewcommand{\vec}[1]{\ensuremath{\boldsymbol{#1}}}
      \providecommand{\tildevec}[1]{\tilde{\vec{#1}}}
      \providecommand{\hatmat}[1]{\hat{\mat{#1}}}
      \providecommand{\geomat}[1]{\vec{\UCmath{#1}}}
      \providecommand{\tildevec}[1]{\ensuremath{\tilde{\vec{#1}}}}
      \providecommand{\mat}[1]{\geomat{#1}} %
      \providecommand{\Prob}[1][]{\ensuremath{\operatorname{Prob}\ifx|#1|{}\else_{#1}\fi}}
      \providecommand{\pdf}[2][]{\ensuremath{\operatorname{pdf}_{#2\ifx|#1|{}\else{\conditionalto{#1}}\fi}}\xspace}
      \providecommand{\expectation}{\ensuremath{\operatorname{E}}}
      \providecommand{\EX}[1][]{\ensuremath{\expectation\ifx|#1|{}\else_{#1}\fi}}
      \providecommand{\gausskernel}[3][x]{%
        \ensuremath{
          \exp\frac{-\if#20{#1}\else(#1-\mu)\fi^2}{%
            2\if#31{}\else\powp2{#3}\fi}%
        }%
      }
      \providecommand{\gaussdistribution}[3][x]{%
        \ensuremath{\frac1{\sqrt{2\pic}\if#31{}\else#3\fi}%
          \gausskernel[#1]{#2}{#3}
        }%
      }%
      \providecommand{\boundary}{\partial}
      \providecommand{\sogroup}[1]{\operatorname{SO}(#1)}
      \providecommand{\PD}[1]{\operatorname{PD}\qpreg{#1}}
      \providecommand{\pdspace}[1]{\PD{\linspace v}}
      \providecommand{\pdmats}[2][F]{\PD{\csname#1\endcsname{#2}}}
      \providecommand{\SPD}{\operatorname{SPD}}
      \providecommand{\spdmats}[2][F]{\SPD(\csname#1\endcsname{#2})}
       \providecommand{\Continuous}{\ensuremath{\operatorname C}\xspace}%
       \providecommand{\Hspace}{\ensuremath{\operatorname H}\xspace}
       \providecommand{\Lebesgue}{\ensuremath{\operatorname L}\xspace}
       \providecommand{\Besovspace}{\ensuremath{\operatorname B}\xspace}
       \providecommand{\Weaklyder}{\ensuremath{\operatorname W}\xspace}
       \providecommand{\dual}[1]{\ensuremath{{#1}'}}
       \providecommand{\dualspace}[2][]{\dual{\linspace{#2}\ifx|#1|\else{_{#1}}\fi}}
       \providecommand{\bidual}[1]{\ensuremath{{#1}''}}
       \providecommand{\bidualspace}[2][]{\bidual{\linspace{#2}\ifx|#1|\else{_{#1}}\fi}}
       \providecommand{\cont}[1]{\ensuremath{\Continuous^{#1}}}
       \providecommand{\diff}[2][]{\ensuremath{\cD\ifx|#1|\else^{#1}\fi(#2)}}
       \providecommand{\BV}[1]{\ensuremath{\operatorname{BV}}}
       \providecommand{\leb}[1]{\ensuremath{\Lebesgue_{#1}}}
       \providecommand{\lebloc}[1]{\ensuremath{{{\Lebesgue}^{\kern-.20em\lower .1ex\hbox{\tiny\textrm{\textup{loc}}}}_{#1}}}}
       \providecommand{\lebnorm}[3][]{\ensuremath{\Norm{#2}_{\leb{#3}\ifx|#1|{}\else(#1)\fi}}}
       \providecommand{\bes}[3][]{\ensuremath{\Besovspace^{#2}_{#3\ifx|#1|\else,#1\fi}}}
       \providecommand{\sob}[2]{\ensuremath{{\smash\Weaklyder}^{#1}_{#2}}}
       \providecommand{\sobh}[1]{\ensuremath{\Hspace^{#1}}}
       \providecommand{\vecsobh}[1]{\ensuremath{\vec\Hspace^{#1}}}
       \ProvideDocumentCommand{\hdiv}{ O{} O{}}{\vecsobh{\operatorname{div}}\ifx+#1+\else_{0|#1}\fi\ifx|#2|\else(#2)\fi}
       \providecommand{\hcurl}[1][]{\vecsobh{\operatorname{curl}}\ifx|#1|\else(#1)\fi}
       \providecommand{\sobhz}[2][]{\sobh{#2}_{0\ifx+#1+\else|#1\fi}}
       \providecommand{\Lip}[1][]{\ensuremath{\operatorname{Lip}}\ifx|#1|{}\else{\qp{#1}}\fi}
       \ProvideDocumentCommand{\polyring}{ O{X} O{A} }{\ring[#2][#1]}
       \ProvideDocumentCommand{\polyfield}{ O{X} O{} }{\field[#2][#1]}
       \providecommand{\polyreals}[1][]{\polyfield[][R]\ifx|#1|\else^{#1}\fi}
       \providecommand{\poly}[2][]{\ensuremath{\rP\ifx#1\else_{#1}\fi^{#2}}}
       \providecommand{\Symmatrices}[2][R]{\ensuremath{\operatorname{Sym}{(\csname#1\endcsname{#2})}}}
       \providecommand{\SAmatrices}[2][F]{\ensuremath{\operatorname{SA}{(\csname#1\endcsname{#2})}}}
       \providecommand{\mesh}[2][]{\ensuremath{\mathcalbf{\MakeUppercase{#2}}\ifx|#1|\else_{#1}\fi}}
      \providecommand{\crouzeixraviart}[1][1]{\operatorname{CR}\ifx|#1|{}\else{^{#1}}\fi}
      \providecommand{\linspace}[1]{\mathscript{\MakeUppercase{#1}}}
      \providecommand{\linop}[1]{\mathcalbf{\MakeUppercase{#1}}}
      \providecommand{\nlop}[1]{\mathcalbf{\MakeUppercase{#1}}}
      \providecommand{\Lin}{\operatorname{Lin}}
      \providecommand{\CL}{\operatorname{CL}}
      \providecommand{\linops}[3][]{\ensuremath{\Lin\ifx|#1|\else^{#1}\fi\qp{{#2}\to{#3}}}}
      \providecommand{\linopss}[3][]{\linops[#1]{\linspace{#2}}{\linspace{#3}}}
      \providecommand{\clinops}[3][]{\ensuremath{\CL\ifx|#1|\else^{#1}\fi\qp{{#2}\to{#3}}}}
      \providecommand{\fepartition}[2][]{\mathscript{\MakeUppercase{#2}}\ifx|#1|{}\else_{#1}\fi}
      \providecommand{\fespace}[2][]{\mathbb{\MakeUppercase{#2}}\ifx|#1|{}\else_{#1}\fi}
      \providecommand{\hatfespace}[2][]{\widehat{\mathbb{\MakeUppercase{#2}}}\ifx|#1|{}\else_{#1}\fi}
      \providecommand{\vespace}[1][]{\fespace v\ifx|#1|\else_{#1}\fi}
      \providecommand{\hatvespace}[1][]{\hatfespace v\ifx|#1|\else_{#1}\fi}
      \providecommand{\fe}[2][]{\ensuremath{\UCmath{#2}\ifx|#1|\else_{#1}\fi}}%
      \providecommand{\vecfe}[2][]{\ensuremath{\vec{\fe{#2}}\ifx|#1|{}\else{_{#1}}\fi}}%
      \providecommand{\matfe}[2][]{\ensuremath{\mat{\fe{#2}}\ifx|#1|{}\else{_{#1}}\fi}}%
      \providecommand{\hatmatfe}[2][]{\ensuremath{\hatmat{\UCmath{#2}}\ifx|#1|{}\else{_{#1}}\fi}}%
      \providecommand{\EOC}{\ensuremath{\operatorname{EOC}}\xspace}
      \providecommand{\tol}{\ensuremath{\operatorname{tol}}\xspace}
      \providecommand{\Forall}{\:\forall\:}
      \providecommand{\Exists}{\:\exists\:}
      \providecommand{\Foreach}{\text{ for each }}%
      \providecommand{\emptyset}{\varnothing}
      \renewcommand{\emptyset}{\varnothing}
      \providecommand{\gets}{\mapsfrom}
      \renewcommand{\gets}{\mapsfrom}
      \renewcommand{\gets}{\leftarrow}
      \providecommand{\funk}[3]{\ensuremath{#1:#2\to#3}}
      \providecommand{\isomorphicto}{\leftrightarrows}
      \providecommand\isomorphic\isomorphicto
      \providecommand{\implies}{\ensuremath{\:\Rightarrow\:}\xspace}
      \renewcommand{\implies}{\ensuremath{\:\Rightarrow\:}\xspace}
      \providecommand{\nty}[1][n]{\ensuremath{#1\to\infty}}
      \providecommand{\restriction}[2]{\left.#1\right|_{#2}}
      \renewcommand{\restriction}[2]{\left.#1\right|_{#2}}
      \providecommand{\evalat}[3][]{\qb{#2}_{\ifx|#1|{}\else#1=\fi#3}}
      \providecommand{\evaldiff}[4][]{\qb{#2}^{\ifx|#1|{}\else#1=\fi#3}_{\ifx|#1|{}\else#1=\fi#4}}
      \providecommand{\aka}[1]{(also known as {#1})\xspace}
      \providecommand{\akaindexemph}[2][]{\aka{\indexemph[#1]{#2}}}
      \providecommand{\bs}{\char '134}   %
      \providecommand{\Program}[1]{\textsf{#1}\xspace}
      \providecommand{\matlabplot}[2][]{%
        \begin{center}
          \includegraphics[width=0.9375\linewidth,trim=64 200 64 200,clip]{#2}%
          \ifx|#1|\else\\#1\fi
        \end{center}%
      }
      \providecommand{\texcommand}[1]{\texttt{\bs{\nolinkurl{#1}}}\xspace}
      \providecommand{\codename}[1]{\nolinkurl{#1}\xspace}
      \providecommand{\colorvarname}[2][a]{\colorvar[#1]{\Verb{#2}}}
      \providecommand{\codevarname}[1]{\colorvarname[a]{#1}}
      \providecommand\olco\codevarname
      \providecommand{\matlab}{{\small\Program{MATLAB}}\xspace}%
      \providecommand\MATLAB\matlab
      \ProvideDocumentCommand{\codesnip}{ O{.} O{1.0} m}{%
        \newline
        \begin{minipage}{#2\linewidth}
          \lstinputlisting{#1/#3}
        \end{minipage}
      }
      \providecommand{\codeprint}[2][.]{
        \ \newline
        \begin{minipage}{\linewidth}
          \lstinputlisting{#1/#2}
          \framebox{Contents of file %
            \ifthenelse{\isundefined\pickuppath}{%
             \codename{#2}%
            }{%
              \providecommand{\fullpickuppath}{}%
              \renewcommand{\fullpickuppath}{\pickuppath/\ifx|#1|\else#1/\fi#2}%
              \href{\fullpickuppath}{\codename{#2}}%
          }}
        \end{minipage}
      }
      \providecommand{\codenoprint}[2][.]{
              \providecommand{\fullpickuppath}{}%
              \renewcommand{\fullpickuppath}{\pickuppath/\ifx|#1|\else#1/\fi#2}%
              \href{\fullpickuppath}{\codename{#2}}%
      }
      \providecommand{\indexen}[2][]{{\ifthenelse{\boolean{shownotes}}{\color b}{}#2\ifx|#1|\index{#2}\else\index{#1}\fi}}
      \providecommand{\indexemph}[2][]{\emph{\indexen[#1]{#2}}}
      \providecommand{\indexma}[2][]{{\ifthenelse{\boolean{shownotes}}{\color b}{}#2\ifx|#1|\index{\(#2\)}\else\index{<#1@\(#2\)}\fi}}
      \providecommand{\ListParameters}{}
      \renewcommand{\ListParameters}%
      {
      	 \setlength{\topsep}{0pt}
      	 \setlength{\leftmargin}{0pt}
               \setlength{\itemsep}{0pt}
      	 \setlength{\parsep}{0pt}
      	 \setlength{\parskip}{0pt}
               \setlength{\labelsep}{0pt}
      	 \setlength{\itemindent}{0pt}
      }
      {%
        \begin{list}%
          {}%
          {\ListParameters%
          
      }}%
      {\end{list}}
      \newcounter{tmpcounter}
      \newcounter{LetterListItem}
      \renewcommand{\theLetterListItem}{(\alph{LetterListItem})}
      \newcounter{CapitalListItem}
      \renewcommand{\theCapitalListItem}{\Alph{CapitalListItem}.}
      \newcounter{NumberListItem}
      \renewcommand{\theNumberListItem}{\arabic{NumberListItem}}
      {
      	\begin{list}%
      	{\theNumberListItem.\ }%
      	{\usecounter{NumberListItem}%
      	 \ListParameters
      	}
      }%
      {\end{list}}
      \newcounter{QuestionListItem}
      \renewcommand{\theQuestionListItem}{\textbf{Question \arabic{QuestionListItem}}}
      {
      	\begin{list}%
      	{\theQuestionListItem.\ }%
      	{\usecounter{QuestionListItem}%
      	 \ListParameters
      	}
      }%
      {\end{list}}
      \newcounter{RomanListItem}
      \renewcommand{\theRomanListItem}{(\roman{RomanListItem})}
      {
      	\begin{list}%
      	{\theRomanListItem\ }%
      	{\usecounter{RomanListItem}
      	 \ListParameters
      	}
      }%
      {\end{list}}
      \newcounter{StepsItem}
      {
      	\begin{list}%
      	{Step \theStepsItem.\ }%
      	{\usecounter{StepsItem}%
      	 \ListParameters
      	}
      }%
      {\end{list}}
      \newcounter{CasesListItem}
      \renewcommand{\theCasesListItem}{\Alph{CasesListItem}}
      {
      	\begin{list}%
      	{\emph{Case \theCasesListItem.}\ }%
      	{\usecounter{CasesListItem}%
      	 \ListParameters
      	}
      }%
      {\end{list}}
      \newcounter{QAListItem}
      \renewcommand{\theQAListItem}{Q\arabic{QAListItem}:}
      {
      	\begin{list}%
      	{\theQAListItem}%
      	{\usecounter{QAListItem}
      	 \ListParameters
      	}
      }%
      {\end{list}}
      \providecommand{\ListParameters}{}
      \renewcommand{\ListParameters}
      {
      	 \setlength{\topsep}{0em}
      	 \setlength{\leftmargin}{0em}
               \setlength{\itemsep}{0ex}
      	 \setlength{\parsep}{.5ex}
      	 \setlength{\itemindent}{\labelsep}
      	 \addtolength{\itemindent}{\labelwidth}
      }
        \providecommand{\ObsName}{Remark}%
        \providecommand{\RemName}{Remark}%
        \providecommand{\NotName}{Notation}%
        \providecommand{\BFNName}{Big~Fat~Note}%
        \providecommand{\DefName}{Definition}%
        \providecommand{\ExaName}{Example}%
        \providecommand{\TheName}{Theorem}%
        \providecommand{\LemName}{Lemma}%
        \providecommand{\ProName}{Proposition}%
        \providecommand{\CorName}{Corollary}%
        \providecommand{\PbmName}{Problem}%
        \providecommand{\HypName}{Hypothesis}%
        \providecommand{\AlgName}{Algorithm}%
        \providecommand{\ExeName}{Exercise}%
        \providecommand{\SolName}{Solution}%
        \providecommand{\ClaName}{Claim}%
        \providecommand{\EsyName}{Essay}%
        \providecommand{\Proofname}{Proof}%
        \providecommand{\Derivename}{Derivation}%
        \providecommand{\Thecounter}{The}
        \providecommand{\Thecounter}{subsection}
      \newcommand{\oltikzgetxy}[3]{%
        \tikz@scan@one@point\pgfutil@firstofone#1\relax
        \edef#2{\the\pgf@x}%
        \edef#3{\the\pgf@y}%
      }
      \providecommand{\pdfformat}[1]{
         \provideboolean{pdfoutput}
         \setboolean{pdfoutput}{#1}%
        \ifthenelse{\boolean{pdfoutput}}{
          \typeout{using pdf}
\makeatletter
\usepackage{pdfsync}
          \providecommand{\graphext}{pdf}
          \renewcommand{\graphext}{pdf}
          \providecommand{\graphextex}{pdf_t}
          \renewcommand{\graphextex}{pdf_t}
        }{
          \typeout{using eps}
          \RequirePackage[dvips]{graphicx,xcolor}
          \providecommand{\graphext}{eps}
          \renewcommand{\graphext}{eps}
          \providecommand{\graphextex}{eps_t}
          \renewcommand{\graphextex}{eps_t}
        }
        \RequirePackage{epsfig}
        \RequirePackage{tikz}
        \RequirePackage{rotating}
\makeatletter
        \RequirePackage{graphicx}
        \RequirePackage{xcolor}
        \provideboolean{darkcolortheme}
        \definecolor{SussexFlint}{rgb}{.00,.19,.21}
        \definecolor{SussexGrey}{rgb}{.51,.58,.49}
        \definecolor{SussexOrange}{rgb}{.94,.29,.00}
        \definecolor{SussexYellow}{rgb}{1.00,.73,.00}
        \definecolor{SussexRed}{rgb}{.94,.01,.49}
        \definecolor{SussexPurple}{rgb}{.48,.06,.44}
        \definecolor{SussexGreen}{rgb}{.00,.58,.46}
        \definecolor{OmarGreen}{rgb}{.00,.68,.36}
        \definecolor{SussexBlue}{rgb}{.00,.58,.65}
        \definecolor{OmarBlue}{rgb}{.00,.38,.65}
        \colorlet{a}{OmarBlue}%
        \colorlet{b}{SussexOrange}
        \colorlet{c}{SussexGreen}
        \colorlet{d}{SussexPurple}%
        \colorlet{e}{SussexRed}
        \colorlet{f}{SussexYellow}
        \colorlet{g}{white}%
        \colorlet{h}{SussexGrey}%
        \colorlet{i}{black}%
        \colorlet{j}{SussexFlint}
        \colorlet{colora}{a}
        \colorlet{colorb}{b}
        \colorlet{colorc}{c}
        \colorlet{colord}{d}
        \colorlet{colore}{e}
        \colorlet{colorf}{f}
        \colorlet{colorg}{g}
        \colorlet{colorh}{h}
        \colorlet{colori}{i}
        \colorlet{colorj}{j}
        \newcommand{\mausDarkColorTheme}{
          \colorlet{a}{SussexYellow!50!yellow}
          \colorlet{b}{SussexBlue}%
          \colorlet{c}{SussexRed!50!red}
          \colorlet{d}{SussexOrange!50!yellow}
          \colorlet{e}{SussexGreen!50!green}
          \colorlet{f}{SussexPurple!50!magenta}
          \colorlet{g}{black}%
          \colorlet{h}{SussexFlint!50!black}
          \colorlet{i}{white}%
          \colorlet{j}{SussexGrey}
        }
        \ifthenelse{\boolean{darkcolortheme}}{\mausDarkColorTheme}{}
\makeatletter
      }
      \providecommand{\solution}{\textbf{\SolName.}\xspace}
      \newcounter{phantomedinput}
      \newcounter{phantombox}
      \renewcommand{\thephantombox}{\Alph{phantombox}}%
      \providecommand{\phantombox}[1]{\stepcounter{phantombox}%
        \ensuremath{\boxed{%
            {\ifthenelse{\boolean{showphantoms}}{#1}{\phantom{#1}}}%
            {\texttt{\tiny\ \colorbox{i!50}{\color g\thephantombox}}
            }%
          }%
        }%
      }
      \newcommand{\consolution}[2][]{
        \ifthenelse{\boolean{hidesolution}}{#1\setboolean{showphantoms}{false}}{%
          {\setboolean{showphantoms}{true}\color{i!50}\par \small {\solution}\ #2\par\ \\[5pt]}}
      }
      \providecommand{\showmarks}[1]{%
        \ifthenelse{%
          \boolean{showmarks}}{%
          \marginpar{%
            \tiny [$#1$ mark\ifthenelse{\equal{#1}1}{\phantom{s}}s]}%
        }{}}%
      \newcommand{\condibreak}{\ifthenelse{\boolean{hidesolution}}{\clearpage}{}}
      \newcommand{\solutibreak}{\ifthenelse{\boolean{hidesolution}}{}{\clearpage}}
      \newcommand{\questionly}[1]{\ifthenelse{\boolean{hidesolution}}{#1}{}}
      \newcommand{\solutionly}[1]{\ifthenelse{\boolean{hidesolution}}{}{#1}}
       \providecommand{\qeyword}[1]{\index{#1}\ifthenelse{\boolean{shownotes}}{{\tiny\color e\colorbox{e!6.25}{#1}}}{}}
       \providecommand{\pathword}[2][]{%
         \label{#2}%
         \ifthenelse{\boolean{shownotes}}{%
           \ \\\index{#2@\tiny\codevarname{#2}}{\ensuremath{\tiny\color f\href{\pathwordbase/#2}{\colorvarname[f]{#2}}}}\\
         }{}%
       }
       \providecommand{\targword}[2][]{%
         \label{#2}%
         \ifthenelse{\boolean{shownotes}}{%
           \index{#2@\tiny\codevarname{#2}}{\ensuremath{\tiny\color f\href{\pathwordbase/#2}{\colorvarname[e]{#2}\ifx|#1|\else\colorvarname[e]{[#1]}\fi}}}\\
         }{}%
       }
       \providecommand{\sourceurl}[2][]{%
         \ifthenelse{\boolean{shownotes}}{{\ \\\tiny\colorbox{d!6.25}{\color d\texttt{source: \ifx|#1|\href{#2}{#1}\else\url{#2}\fi}}}}}
       \providecommand{\sourcecite}[2][]{\ifthenelse{\boolean{shownotes}}{{\ \\\tiny\colorbox{d!6.25}{\color d\texttt{source: \citet[#1]{#2}}}}}{%
       }}
       \providecommand{\conword}[2][]{\ifthenelse{\boolean{shownotes}}{#2}{#1}}
       \providecommand{\solword}[2][]{\ifthenelse{\boolean{hidesolution}}{#1}{#2}}
       \providecommand{\solghost}[1]{\ifthenelse{\boolean{showphantoms}}{#1}{\phantom{#1}}}
        \newcommand{\llabel}[1]{\hypertarget{llineno:#1}{\linelabel{#1}}}
        \newcommand{\lref}[1]{\hyperlink{llineno:#1}{\ref*{#1}}}
        \newcommand\llabel[1]{}
        \newcommand\lref[1]{}
      \providecommand{\mailto}[1]{\href{mailto:#1}{\nolinkurl{#1}}}
      \providecommand{\oldetails}[2]{\ifthenelse{\boolean{showoldetails}}{#1}{#2}}
   \newtheoremstyle{plain}%
     {}%
     {}%
     {\mdseries\slshape}%
     {\parindent}%
     {\bfseries}%
     {.}%
     {.5em}%
     {}%
   \newtheoremstyle{note}%
     {}%
     {}%
     {}%
     {\parindent}%
     {\bfseries}%
     {.}%
     {.5em}%
     {}%
   \newtheoremstyle{claim}%
     {}%
     {}%
     {\mdseries\slshape}%
     {}%
     {\bfseries}%
     {}%
     {.5em}%
     {}%
   \newtheoremstyle{exercise}%
     {}%
     {}%
     {}%
     {}%
     {\bfseries}%
     {.}%
     {1em}%
     {}%
   \newtheoremstyle{break}%
     {}%
     {}%
     {}%
     {}%
     {\bfseries}%
     {.}%
     {\newline}%
     {}%
     \theoremstyle{plain}
\newtheorem{The}{\TheName}[section]}%
      \theoremstyle{plain}
      \renewcommand{\Thecounter}{subsection}
      \newtheorem*{The*}{\TheName}
      \newtheorem*{Lem*}{\LemName}
      \newtheorem*{Pro*}{\ProName}
      \newtheorem*{Cor*}{\CorName}
      \newtheorem*{Pbm*}{\PbmName}
      \newtheorem*{Hyp*}{\HypName}
      \newtheorem*{Exe*}{\ExeName}
      \newtheorem*{Txx*}{\ExeName} %
      \newtheorem*{Con*}{Conclusion}
      \newtheorem*{Sum*}{Summary}
      \theoremstyle{claim}
      \theoremstyle{note}
      \newtheorem*{Obs*}{\ObsName}
      \newtheorem*{Def*}{\DefName}
      \newtheorem*{Exa*}{\ExaName}
      \newtheorem*{Alg*}{\AlgName}
      \theoremstyle{break}
       \newtheorem{The}[subsection]{Theorem}
     \newaliascnt{Lem}{subsection}%
     \newtheorem{Lem}[Lem]{Lemma}
     \newaliascnt{Pro}{subsection}
     \newtheorem{Pro}[Pro]{Proposition}
     \crefname{Lem}{lemma}{lemmata}
     \crefname{Pro}{proposition}{propositions}
     \newenvironment{The}[1][]{%
       \ifx&#1&%
       \subsection{\TheName\xspace}%
       \else%
       \subsection[\MakeUppercase#1 theorem]{\TheName\ (#1)}%
       \fi%
       \slshape}{%
       \upshape}
     \newenvironment{Pro}[1][]{\subsection{\ProName\xspace{\ifx&#1&{}\else{ (#1)}\fi}}\slshape}{\upshape}
     \newenvironment{Lem}[1][]{\subsection{\LemName\xspace{\ifx&#1&{}\else{ (#1)}\fi}}\slshape}{\upshape}
     \newenvironment{Cor}[1][]{\subsection{\CorName\xspace{\ifx&#1&{}\else{ (#1)}\fi}}\slshape}{\upshape}
     \newenvironment{Def}[1][]{\subsection{\DefName\xspace{\ifx&#1&{}\else{ of \indexen{#1}}\fi}}}{}
     \newenvironment{Obs}[1][]{\subsection{\ObsName\xspace{\ifx&#1&{}\else{ (#1)}\fi}}}{}
     \newenvironment{Alg}[1][]{\subsection{\AlgName\xspace{\ifx&#1&{}\else{ (#1)}\fi}}}{}
   \providecommand{\qed}{\vrule height 5pt depth 0pt width 3pt}
   \providecommand{\qqed}{{\raggedright{\ \hfill\qed}}}
   \newcounter{passo}
   \newenvironment{Proof}[1][]%
   {\par\noindent{\bf \Proofname\ifx|#1|.\ \else\ #1.\ \fi}\setcounter{passo}{0}}%
   {\qqed\par}
   {\par\noindent{\bf \Derivename\ #1}\setcounter{passo}{0}}%
   {\qqed\par}
   \newenvironment{Proof*}[1][{}]%
   {\subsection{\Proofname\ #1}\setcounter{passo}{0}}
   {\qqed\par}
\renewcommand{\leq}{\leqslant}
\renewcommand{\geq}{\geqslant}
\renewcommand{\rot}{\nabla\!\times\!}%
\providecommand{\codevarname}[1]{\colorvarname{#1}}
\renewcommand{\codevarname}[1]{\colorvarname{#1}}
\providecommand{\tol}{\codevarname{tol}}
\renewcommand{\tol}{\codevarname{tol}}
\providecommand{\res}{\codevarname{res}}
\renewcommand{\res}{\codevarname{res}}
\providecommand{\maxiter}{\codevarname{maxiter}}
\renewcommand{\linop}[1]{\mathcal{\MakeUppercase{#1}}}
\providecommand{\feop}[2][h]{\mathcal{\MakeUppercase{#2}}\ifx|#1|\else_{#1}\fi}
  \renewcommand{\fe}[2][]{\ensuremath{\mathsfit{#2}\ifx|#1|\else_{#1}\fi}}%
  \renewcommand{\vecfe}[2][]{\ensuremath{\vec{\mathsfit{#2}}\ifx|#1|{}\else{_{#1}}\fi}}%
  \renewcommand{\matfe}[2][]{\ensuremath{\mat{\mathsfit{\MakeUppercase{#2}}}\ifx|#1|{}\else{_{#1}}\fi}}%
  \renewcommand{\fe}[2][]{\ensuremath{\mathsf{#2}\ifx|#1|\else_{#1}\fi}}%
  \renewcommand{\vecfe}[2][]{\ensuremath{\vec{\mathsf{#2}}\ifx|#1|{}\else{_{#1}}\fi}}%
  \renewcommand{\matfe}[2][]{\ensuremath{\mat{\mathsf{\MakeUppercase{#2}}}\ifx|#1|{}\else{_{#1}}\fi}}%
\providecommand{\tildefespace}[2][]{\tilde{\fespace[#1]{#2}}}
\providecommand{\DNewton}{\operatorname{\mathfrak D}\!}
\providecommand{\aposteriori}{a posteriori\xspace}
\providecommand{\apriori}{{a priori}\xspace}
\renewcommand{\aposteriori}{a posteriori\xspace}
\renewcommand{\apriori}{{a priori}\xspace}
\renewcommand{\colorvarname}[2][i]{\texttt{#2}}%
\renewcommand{\Argmax}[1][]{\operatorname{Argmax}\ifx|#1|\else\limits_{#1}\fi}
\renewcommand{\argmax}[1][]{\operatorname{argmax}\nolimits}
\providecommand{\normonsob}[4][\W]{\normon{#2}{\sob{#3}{#4}\if|#1|{}\else(#1)\fi}}
\providecommand{\Normonsob}[4][\W]{\Normon{#2}{\sob{#3}{#4}\if|#1|{}\else(#1)\fi}}
\providecommand{\normonsobh}[3][\W]{\normon{#2}{\sobh{#3}\if|#1|{}\else(#1)\fi}}
\providecommand{\Normonsobh}[3][\W]{\Normon{#2}{\sobh{#3}\if|#1|{}\else(#1)\fi}}
\providecommand{\Normonleb}[3][\W]{\Normon{#2}{\leb{#3}\if|#1|{}\else(#1)\fi}}
\renewcommand{\normonsob}[4][\W]{\normon{#2}{\sob{#3}{#4}\if|#1|{}\else(#1)\fi}}
\renewcommand{\Normonsob}[4][\W]{\Normon{#2}{\sob{#3}{#4}\if|#1|{}\else(#1)\fi}}
\renewcommand{\normonsobh}[3][\W]{\normon{#2}{\sobh{#3}\if|#1|{}\else(#1)\fi}}
\renewcommand{\Normonsobh}[3][\W]{\Normon{#2}{\sobh{#3}\if|#1|{}\else(#1)\fi}}
\renewcommand{\Normonleb}[3][\W]{\Normon{#2}{\leb{#3}\if|#1|{}\else(#1)\fi}}
\numberwithin{equation}{section}
\providecommand{\authoromar}{Omar Lakkis}
\providecommand{\authoramireh}{Amireh Mousavi}
\providecommand{\ourshorttitle}{%
  Least-squares Galerkin to gradient recovery for HJB equation}
\providecommand{\ourtitle}{%
  A least-squares Galerkin approach to gradient recovery for
  Hamilton-Jacobi-Bellman equation with Cordes coefficients}
\providecommand{\ourabstract}{%
  We propose a conforming finite element method to approximate the strong solution of the second-order Hamilton-Jacobi-Bellman equation with Dirichlet boundary conditions and coefficients that satisfy the Cordes condition.
  We show the convergence of the continuum semismooth Newton method for the fully nonlinear Hamilton-Jacobi-Bellman equation.
  Using this linearization approach for the equation yields a recursive sequence of linear elliptic boundary value problems (BVPs) in nondivergence form.  
  We numerically solve these BVPs using the least-squares gradient recovery method proposed by \citet{LakkisMousavi:21:article:A-least-squares}. 
  We offer an optimal-rate \apriori and \aposteriori error bounds for the approximation.
  The \aposteriori error estimators are used to drive an adaptive refinement procedure. 
  We close with computer experiments on both uniform and adaptive meshes to reconcile the theoretical findings.
}
\providecommand{\ourkeywords}{%
  Hamilton--Jacobi--Bellman equations, Cordes coefficients, semismooth
  Newton linearization, least-squares approach, gradient recovery,
  optimal \apriori error bound, \aposteriori error bound, adaptive
  refinement}
\title[\ourshorttitle]{\ourtitle}
\author{\authoromar}
\author{\authoramireh}
\renewcommand{\d}{\operatorname d\!}
\begin{document}
\ifthenelse{\boolean{shownotes}}{
  \tableofcontents
  \listoffigures
  \setcounter{page}0
  \clearpage
}
\maketitle
\begin{abstract}
  \ourabstract
\end{abstract}
\subsection*{Keywords:}\ %
\ourkeywords
\subsection*{AMS subject classifications.} 65N15, 65N30, 65N50, 35D35, 47J25
\section{Introduction}
We develop a Galerkin least-squares numerical method to approximate a function $\funk u\W\reals$ that satisfies the following elliptic Dirichlet boundary value problem (BVP) associated to the \indexen{Hamilton--Jacobi--Bellman (HJB)} partial differential equation (PDE)
\begin{equation}
  \label{eq:HJB0-inhomogeneous}
  \sup_{\alpha\in\mathcal{A}}
  \qp{\linop L^\alpha u - f^\alpha}=0\text{ in } \W
  \tand
  \restriction u{\boundary\W}=r.
\end{equation}
Here and throughout $\W$ is a bounded convex domain in $\R d$,
$d\in\naturals$ (typically $d=2,3$), $\mathcal{A}$ is a compact metric
space called the \indexemph{(admissible) control set},
$r\in\sobh{3/2}(\boundary\W)$. For each admissible
\indexemph{control} $\alpha$ in $\mathcal{A}$, the corresponding forcing term
$f^\alpha$ is a member of $\leb2(\W)$ and the elliptic operator $\linop L^\alpha$ is
defined by a triple of functions $\funk{\triple{\mat a^\alpha}{\vec b^\alpha}{c^\alpha}}\W{\realsqmats d\times\R d\times\reals}$ and
\begin{equation}
  \label{op:HJB}
  \linop L^\alpha v
  :=
  \mat A^\alpha\frobinner\D^2 v
  +
  \vec b^\alpha\inner \nabla v
  -
  c^\alpha v
  \Foreach v\in\sobh2(\W)
  .
\end{equation}
Here, $\grad\phi$ and $\D^2\phi$ respectively indicate the gradient
and Hessian of a function $\phi$, and $\mat M\frobinner\mat
N:=\traceof{\transposemat M\mat N}$ defines the \indexen{Frobenius
  product} for two equally sized matrices $\mat M,\mat N\in\realmats
mn$, $m,n\in\naturals$. Additionally, if $m=n$, $\trace\mat{M}$
denotes $\mat{M}$'s trace, i.e., the sum of its (possibly repeated)
eigenvalues.

Excepting special situations, e.g., when the control set $\mathcal{A}$
is a singleton, equation (\ref{eq:HJB0-inhomogeneous}) is not linear
in $u$ and both its mathematical and computational analysis must be
approached as a fully nonlinear elliptic equation.

The HJB equation (\ref{op:HJB}) was introduced in the context of
\indexemph{dynamic programming} developed by
\citet{Bellman:57:book:Dynamic,Bellman&Kalaba} for optimal control
models. The second order equation considered here appears in optimal
control problems with stochastic processes; we refer to
\citet{Fleming&Sner} and the references therein as a source of
information on the modeling and analysis.

Fully nonlinear PDEs, including the HJB equations, of which a special
case is a reformulation of the Monge--Ampère equation
\cite{Lions:84:inproceedings:Hamilton-Jacobi-Bellman,
  krylov2001nonlinear}, play an essential role in many fields of
natural and social sciences as well as technology.  The practical
relevance motivates further the search for practical numerical
methods. The nonvariational
nature of fully nonlinear PDEs forms a challenge to their numerical
approximation via standard Galerkin methods, making finite differences
a natural first resort; but the flexibility
that Galerkin methods offer in terms of geometry approximation
and powerful adaptive mesh refinement techniques makes seeking
such methods worthwhile.

One of the main difficulties in seeking solutions to general fully
nonlinear PDEs, is the lack of classical solutions, which necessitates
often the investigation of solutions in a weak or generalized sense.
Since the structure of such PDEs precludes a natural variational
formulation the direct application of a weak solution in $\sobh1(\W)$
is not practical.  The natural approach to weak solutions, instead,
traced by \citet{Crandall&Lions} for first order equations and
extended to second order equations by \citet{Lions1,Lions2}, relies on
the concept of \indexemph{viscosity solution}, rooted in the theory of
vanishing viscosity methods in fluid dynamics intiated by
\citet{Hopf:50:article:The-partial}. Independent developments to weak
solutions for fully nonlinear eqautions and HJB can be traced also to
\citet{Aleksandrov:61:article:Investigation-on-the-Maximum} who
underscored the importance of the \indexemph{maximum principle} for
viscosity solutions and subsequent work connecting to stochastic
control by
\citet{Krylov:72:article:Control,Krylov:79:article:On-the-maximum-principle}.
This avenue has since led to a flurry in the theory of fully nonlinear
problems, often in connection with the theory of stochastic control
\citep{CaffarelliCabre:95:book:Fully,FlemingSoner:06:book:Controlled,Krylov:09:book:Controlled,Krylov:18:book:Sobolev}.

From a numerical perspective, a milestone was reached with the seminal
result of \citet{Barles&Souganidis}, demonstrating that consistency,
stability, and monotonicity of an approximating scheme guarantee the
convergence of the approximate solution to the exact viscosity
solution.  The importance of the Barles--Souganidis theorem
is vindicated by the extensive literature built upon it
\citep{Barles&Jakobsen,Oberman,Debrabant&Jakobsen,FengJensen:17:article:Convergent}.
The finite difference, including semi-Lagrangian or carefully designed
wide-stencil methods are well suited in approximating a viscosity
solution, as they preserve the maximum principle and consistency
\citep{Motzkin&Wasow,%
  KuoTrudinger:92:article:Discrete,%
  Bonnans&Zidani,%
  FroeseOberman:11:article:Convergent}.
Moreover, certain Galerkin methods, such as $\poly1$ on meshes that
satisfy the maximum mrinciple, have been shown to converge to the
viscosity solution, as demonstrated by \citep{Jensen&Smears,
  NochettoZhang:18:article:Discrete, Salgado&Zhang}.

\margnote[Omar]{continue from here}
While besides stability and consistency, monotonicity-based
discretization methods are guaranteed to converge, providing this
property in discretizations is not always immediately obvious and may
preserve only in specific cases.

To ease the strict requirement of discretization monotonicity,
\citet{Sme&Sul-2} explored the HJB equation under the Cordes condition
on the coefficients of the ellipic operators $\linop L^\alpha$
\citep{Cordes:59:article:Vereinfachter}. They introduced and analyzed
a finite element approximation that converges without the need to
maintain monotonicity.
In this approach, the underlying PDE is reformulated into a
\indexemph{second-order binonlinear form}, i.e., a form in two
variables which is nonlinear in at least one of its arguments, which
they then discretize employing a discontinuous Galerkin finite element
scheme.
 
The provided convergence analysis relies on the strong monotonicity of
the binonlinear form, considered as a generalization of coercivity for
nonlinear operators, but as a functional rather than in
discretization.

In \citet{Gal&Sul}, a similar convergence argument is employed, but
with a distinction: the HJB equation is treated as a variational
binonlinear form problem, utilizing a conforming finite element
method. The fundamental analytical tool in this work is also the
strong monotonicity of the binonlinear form.

There are two strategies to deal with nonlinear PDEs such as the HJB
equation. In the first strategy, the nonlinear problem is discretized
and the resulting nonlinear finite-dimensional system is linearized
with a nonlinear solver such as Newton's method. Many computational
methods for approximating the solution of the HJB equation follow this
approach \citep{Oberman, Sme&Sul-2, Gal&Sul}.
In this scenario, if either the discretization is monotone or, in the
functional setting, the binonlinear form is strongly monotone, the
error analysis of the method becomes possible. Furthermore, strong
monotonicity establishes the framework for applying the Browder-Minty
theorem to demonstrate the well-posedness of solutions to nonlinear
equations.
It is however not always possible to achieve a monotone discretization
or a strongly monotone binonlinear form for a fully nonlinear
problem. To the best of our knowledge, all methods that provide an
approximate solution and offer a satisfactory convergence analysis for
the HJB equation follow this strategy by somehow enforcing
monotonicity.

In a second strategy, which we pursue here, the nonlinear PDE is first
linearized, for example, by using Newton's method in the appropriate
infinite-dimensional space, into an iterative sequence of linear PDEs
in nondivergence form at the continuum level. Subsequently, these
linear PDEs are discretized.  By following this strategy, the
convergence rate of the nonlinear solver is independent of the
discrete space parameters, such as the meshsize or polynomial degree
in Galerkin methods. Furthermore any numerical method applicable to
linear problems in nondivergence form can be extended to fully
nonlinear problems, such as HJB equation we study herein. An instance
of the second strategy is
\citet{LakkisPryer:11:article:A-finite,Lakkis&Pryer}, who derived a
computational method for linear elliptic problems in nondivergence
form used to solve the linearized iteration for a class of fully
nonlinear problems in which the nonlinearity is \indexemph[algebraic
  nonlinearity]{algebraic}, i.e., a nonlinearity that can be written
without resorting to $\sup_\alpha$ operations with infinitely many
$\alpha$s (e.g., the Monge--Ampère).

In this paper, we adopt the strategy of
first-linearize-then-discretize.  In the linearization step we
establish the Newton differentiability of the HJB operator from
$\sobh2(\W)$ to $\leb2(\W)$, subject to a uniform convergence of controls
Assumption~\ref{assum-uniformly-convergent-coefficients}.

While \cite{Sme&Sul-2} have demonstrated this concept for the operator
from $\sob 2s(\W;\mesh T)$ to $\leb{l}(\W)$, where $1\leq l < s \leq
\infty$, these spaces are mesh-dependent and encompass only finite
element spaces (not $\sobh2(\W)\rightarrow \leb2(\W)$).

The Newton differentiability of the HJB operator allows us to extend
the classical Newton linearization to the HJB operator, even when it
is not necessarily Fréchet differentiable. This extended method is
referred to as the \emph{semismooth Newton} linearization
\citep{Hintermuller:10:booklet:Semismooth, Ito&Kunisch}. With this
approach, the solution of the nonlinear HJB equation is realized as
the limit of a recursive sequence of solutions to linear problems in
nondivergence form. We complete the linearization theory by
demonstrating the superlinear convergence rate of this recursive
method.

We also introduce the algorithmic form of the resulting recursive
method, known as \emph{Howard's algorithm} or \emph{policy iteration},
which, to the best of our knowledge, is presented here for the first
time in the infinite dimensional setting for HJB PDEs.

At each iteration, this algorithm updates a space-dependent function,
denoted as $q$ and taking values in the control set $\mathcal A$. This
function $q$ determines the linear operator $\linop l^{q_{n}}$ as seen
in (\ref{eq:recursive-practical-a-e}). Consequently, Howard's
algorithm yields a sequence of control-seeking parts and nondivergence
form PDE solvers, essentially constituting a parametrized
linearization procedure.

In the context of connecting fully nonlinear and linear problems, the
bridge is formed by nondivergence form linear operators. To discretize
PDEs in this scenario, we employ a least-squares Galerkin gradient
recovery method. This approach offers a straightforward means of
dealing with linear equations in nondivergence form. It can be
interpreted as a mixed finite element technique and provides a
convenient framework for deriving \aposteriori error estimates and
corresponding adaptive methods, as demonstrated in
\citet{LakkisMousavi:21:article:A-least-squares}.

The least-squares approach enables us to replace any constraint required
to ensure the problem is well-posed with an additional term in the quadratic
(least-squares) form. This flexibility to work in a general space is a
significant advantage, as constructing finite element approximations
that exactly satisfy conditions like rotational-free or vanishing
tangential trace can be challenging.

The rest of our article is arranged as follows: in \S\ref{sec:set-up},
we clarify the problem assumptions and the existence theory concerning
to the well-posedness of the strong solution for the HJB equation.

In \S\ref{sec:smismooth-Newton-method}, we introduce the concept of
Newton differentiability for operators and illustrate its application
to the HJB operator in the continuous setting, particularly from
$\sobh2(\W)$ to $\leb2(\W)$.  This is done under the assumption of
uniform convergence on the policy operators, as outlined in
Assumption~\ref{assum-uniformly-convergent-coefficients}. We also
discuss the use of the semismooth Newton method for linearizing the
HJB equation. Subsequently, we present the algorithmic expression of
the linearization procedure in the form of Howard's algorithm, which
involves solving a second-order elliptic equation in nondivergence
form.  In \S\ref{sec:Variational-linear}, we provide a review of the
least-squares Galerkin approach, which includes gradient recovery as
proposed in \cite{LakkisMousavi:21:article:A-least-squares}. This
method is employed to solve linear elliptic PDEs in nondivergence
form, and we also revisit the key error bounds associated with it.  In
\S\ref{sec:Discretization}, we return to the nonlinear HJB equation
and discuss the error analysis of the approximation, which includes
both the associated \apriori and \aposteriori error estimates. Owing
to the \aposteriori error bound, we design error indicators to be
employed in an adaptive refinement strategy. Additionally, we outline
the algorithms used during the implementation phase to approximate the
solution of the HJB equation.  In \S\ref{sec:numerical-experiment} we
present two numerical tests that validate the theoretical results.

\section{Set-up and notation}
\label{sec:set-up} 
We now briefly review ellipticty and Cordes conditions in
\S\ref{sec:assumptions-on-data}, refornulate the HJB-Dirichlet problem
\eqref{eq:HJB0-inhomogeneous} into a homogeneous Dirichlet problem in
\S\ref{sec:homogeneous-Dirichlet}, and recall that it, and thus the
original heterogeneous version (\ref{eq:HJB0-inhomogeneous}), admits a
unique strong solution under $\leb\infty(\W;\cont0(\mathcal A))$ data
assumptions in
\S\ref{The:well-posedness-strong-solution}--\S\ref{rem:relaxing-continuity-uniformly-bounded}.
\subsection{Assumptions on the data}
\label{sec:assumptions-on-data}

Throughout this paper, concerning the BVP (\ref{eq:HJB0-inhomogeneous}), we suppose that the coefficients $\mat A^\alpha$ satisfy the \indexemph{uniform ellipticity condition}
\begin{equation}
  \label{def:uniformly-elliptic}
  \constref[\hA,\flat]{def:uniformly-elliptic}
  \eye
  \leq
  \mat A^\alpha
  \leq
  \constref[\hA,\sharp]{def:uniformly-elliptic}
  \eye,
  \text{ a.e. in }\W,\Foreach \alpha \in \mathcal{A}
\end{equation}
for some positive constants $\constref[\hA,\flat]{def:uniformly-elliptic}$ and $\constref[\hA,\sharp]{def:uniformly-elliptic}$ independent of $\alpha\in\mathcal A$, while the tensor-valued $\mat A^\alpha$, vector-valued $\vec b^\alpha$, nonnegative scalar-valued $c^\alpha$ satisfy satisfy one of the Cordes conditions as outlined by \cite{Sme&Sul-2}. These conditions are as follows
\begin{enumerate}[(a)\ ]
\item
  there exists 
  $\lambda >0$ and $\varepsilon\in (0,1)$ such that for $\alpha \in \mathcal{A}$, 
  \begin{equation}
    \label{def:general-Cordes-condition}
    \dfrac{\norm{\mat A^\alpha}^2
      + \fraclff{\norm{\vec b^\alpha}^2}{2\lambda}
      + (\fraclff{c^\alpha}\lambda)^2}{
      (\trace\mat A^\alpha
      + \fracl {c^\alpha}\lambda ) ^2
    }
    \leq
    \dfrac{1}{d+\varepsilon}
    \text{ a.e. in } \W,
  \end{equation}
  where for a tensor/matrix $\mat M\in\realsqmats n$, $\norm{\mat M}$ signifies its
  Frobenius norm, $(\mat M\frobinner\mat M)^{\fracl{1}{2}}$.
\item
  or in the case of a \emph{homogeneously second-order} $\linop l^\alpha$, i.e., $\vec b^\alpha =0$ and
  $c^\alpha=0$, take $\lambda=0$ and replace
  \begin{equation}
    \label{def:special-Cordes-condition}
    \dfrac{\norm{ \mat A^\alpha }^2}{(\trace \mat A^\alpha)^2}
    \leq
    \dfrac{1}{d-1+\varepsilon}
    \text{ a.e. in }\W
  \end{equation}
  for some $\varepsilon\in (0,1)$.
\end{enumerate}
For more details on the Cordes condition see \citet[\S2.2]{LakkisMousavi:21:article:A-least-squares}.
We take the admissible \indexemph[admissible control set]{control} \akaindexemph[admissible policies set]{policy} \emph{set} $\mathcal A$ to be a compact metric space with distance $d_\mathcal A$.
The ensuing topology is used in defining spaces $\cont0(\mathcal A;X)$ with $X=\reals,\R d,$ or $\realsqmats d$. 
Most often $\mathcal A$ is in fact subset (e.g., a Lie group or a subspace) of the matrix algebra $\realsqmats{d}$; we give examples in \S\ref{test:non-homogeneous-boundary} and \S\ref{test:adaptive-disk-domain}.
We denote by $\ball[\mathcal  A]{\alpha}{\rho}$ the open ball of center $\alpha$ and radius $\rho\geq0$ with $d_{\mathcal A}$ scale.
\subsection{Homogeneous Dirichlet problem reformulation of (\ref{eq:HJB0-inhomogeneous})}
\label{sec:homogeneous-Dirichlet}
Since $r\in\sobh{3/2}(\boundary\W)$, it admits an extension to $\tilde{r}\in\sobh2(\W)$ with boundary trace $r$ ($\restriction {\tilde{r}} {\boundary\W}= r$) satisfying
\begin{equation}
  \label{eqn:trace-inequality}
  \Norm{\tilde{r}}_{\sobh2(\W)} 
  \leq
  \constref{eqn:trace-inequality}
  \Norm{r}_{\sobh{3/2}(\boundary\W)}
\end{equation}
for some $\constref{eqn:trace-inequality}>0$ depending only on $\W$. 
By setting $v= u-\tilde{r}$, we rewrite (\ref{eq:HJB0-inhomogeneous}) as the homogeneous problem of finding $v$ such that
\begin{equation}
  \label{eq:HJB0-homogeneous}
  \sup_{\alpha \in \mathcal{A}}
  \left(  \linop L^\alpha v - f^\alpha + \linop L^\alpha \tilde{r}\right) = 0  ~ \text{ in } \W
  \tand
  \restriction v{\boundary\W}=0.
\end{equation}
Therefore, we consider the homogeneous problem of finding $u$ satisfying
\begin{equation}
  \label{eq:HJB-homogeneous}
  \sup_{\alpha \in \mathcal{A}}
  \left(  \linop L^\alpha u - f^\alpha \right) = 0 ~ \text{ in } \W
  \tand
  \restriction u{\boundary\W}=0.
\end{equation}
We shorten notation by introducing the \indexemph{HJB operator}
${\nlop F}:\sobh2(\W) \meet \sobhz1(\W)\rightarrow \leb2(\W)$ by 
\begin{equation}
  \label{def:op:HJB}
  \nlop F[v](\vec x)
  =
  \sup_{\alpha \in \mathcal{A} }
  \left(  
  \linop L^\alpha v(\vec x) - f^\alpha(\vec x)
  \right) 
  ~
  \text{ for }
  \vec x\in\W.
\end{equation}
%
%
%
%
%
%
%
%%% -*- mode: latex; reftex-mode: true; coding: mule-utf-8; tex-main-file: "../variational-splitting-HJB.tex"; -*-
%%%%%%%%%%%%%%%%%%%%%%%%%%%%%%%%%%%%%%%%%%%%%%%%%%%%%%%%%%%%%%%%%%%%%%%%
\begin{The}[existence and uniqueness of a strong solution \citep{Sme&Sul-2}]
  \label{The:well-posedness-strong-solution} 
  Suppose that $\W$ is a bounded convex domain in $\R d$,
  $\mathcal{A}$ is a compact metric space under $d_{\mathcal A}$,
  and that ${\mat A,\vec b,c,f}\in\cont0({\W\times\mathcal{A}};X)$,
  for $X=\Symmatrices d,\R d,\reals,\reals$, satisfy
  (\ref{def:uniformly-elliptic}) and 
  (\ref{def:general-Cordes-condition}) with $\lambda>0$, or
  (\ref{def:special-Cordes-condition}) with $\lambda=0$ when $\vec b
  = 0$ and $c = 0$ hold.
  Then there exists a unique function $u\in\sobh2(\W)\meet\sobhz1(\W)$
  that satisfies the HJB equation (\ref{eq:HJB-homogeneous}) almost
  everywhere in $\W$.
\end{The}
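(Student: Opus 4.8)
The plan is to follow \citet{Sme&Sul-2}: exploit the Cordes condition to renormalise the HJB operator so that, after division by a suitable positive weight, it becomes a \emph{contractive} perturbation of the linear operator $v\mapsto\lap v-\lambda v$, whose inversion on a convex domain is classical, and then to invoke Banach's fixed-point theorem. Throughout I would work with the homogeneous problem \eqref{eq:HJB-homogeneous}, the passage to it from \eqref{eq:HJB0-inhomogeneous} being the routine translation already set up in \S\ref{sec:homogeneous-Dirichlet}, and with the weighted norm
\[
  \Norm{v}_\star^2:=\Norm{\D^2 v}_{\leb2(\W)}^2+2\lambda\Norm{\nabla v}_{\leb2(\W)}^2+\lambda^2\Norm{v}_{\leb2(\W)}^2
\]
(so that $\Norm{v}_\star=\Norm{\D^2 v}_{\leb2(\W)}$ in case~(b), where $\lambda=0$), which on $\sobh2(\W)\meet\sobhz1(\W)$ is equivalent to the usual $\sobh2(\W)$-norm --- by Poincaré if $\lambda>0$, and for $\lambda=0$ by iterating Poincaré together with $\Norm{\lap v}_{\leb2(\W)}\leq\sqrt d\,\Norm{\D^2 v}_{\leb2(\W)}$ --- so that $\bigl(\sobh2(\W)\meet\sobhz1(\W),\Norm{\cdot}_\star\bigr)$ is a Banach space.

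First I would renormalise. For $\alpha\in\mathcal A$ set
\[
  \gamma^\alpha:=\frac{\trace\mat A^\alpha+c^\alpha/\lambda}{\norm{\mat A^\alpha}^2+\norm{\vec b^\alpha}^2/(2\lambda)+(c^\alpha/\lambda)^2}\quad\text{in case (a)},\qquad \gamma^\alpha:=\frac{\trace\mat A^\alpha}{\norm{\mat A^\alpha}^2}\quad\text{in case (b)}.
\]
Uniform ellipticity \eqref{def:uniformly-elliptic} bounds each $\gamma^\alpha$ above and below by positive constants independent of $\alpha$, and continuity of the data with compactness of $\mathcal A$ make $\nlop F_\gamma[v]:=\sup_{\alpha\in\mathcal A}\gamma^\alpha\bigl(\linop L^\alpha v-f^\alpha\bigr)$ a well-defined map $\sobh2(\W)\meet\sobhz1(\W)\to\leb2(\W)$: for fixed $v$ the integrand is measurable in $\vec x$ and continuous in $\alpha$, so the supremum is measurable since $\mathcal A$ is separable, and it lies in $\leb2(\W)$ because it is dominated pointwise, uniformly in $\alpha$, by a fixed $\leb2(\W)$ function. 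Since $\gamma^\alpha>0$, the equation $\nlop F[u]=0$ is equivalent to $\nlop F_\gamma[u]=0$.

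The crux is the \emph{Cordes algebraic inequality}: for every $v\in\sobh2(\W)$, every $\alpha\in\mathcal A$, and a.e.\ in $\W$,
\[
  \norm{\gamma^\alpha\linop L^\alpha v-\lap v+\lambda v}\leq\sqrt{1-\varepsilon}\,\bigl(\norm{\D^2 v}^2+2\lambda\norm{\nabla v}^2+\lambda^2 v^2\bigr)^{1/2}
\]
(right-hand factor reduced to $\norm{\D^2 v}$ in case~(b)). I would prove it by writing the left-hand side as $(\gamma^\alpha\mat A^\alpha-\eye)\frobinner\D^2 v+\gamma^\alpha\vec b^\alpha\inner\nabla v+(\lambda-\gamma^\alpha c^\alpha)v$, applying the Cauchy--Schwarz inequality with weights $1,2\lambda,\lambda^2$, and computing that the resulting coefficient factor equals $(d+1)-\gamma^\alpha\bigl(\trace\mat A^\alpha+c^\alpha/\lambda\bigr)$; by the definition of $\gamma^\alpha$ and the Cordes condition \eqref{def:general-Cordes-condition} (resp.\ \eqref{def:special-Cordes-condition}) this is at most $(d+1)-(d+\varepsilon)=1-\varepsilon$. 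This is the only place the structural hypothesis is used in an essential way. Combined with the convexity of $\W$ --- which enters through the Miranda--Talenti inequality $\Norm{\D^2 v}_{\leb2(\W)}\leq\Norm{\lap v}_{\leb2(\W)}$ on $\sobh2(\W)\meet\sobhz1(\W)$ and integration by parts --- this yields that $v\mapsto\lap v-\lambda v$ is a bijection of $\sobh2(\W)\meet\sobhz1(\W)$ onto $\leb2(\W)$ with $\Norm{v}_\star\leq\Norm{\lap v-\lambda v}_{\leb2(\W)}$.

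Finally I would set up the contraction. Writing $\linop L_\gamma^\alpha:=\gamma^\alpha\linop L^\alpha$, the equation $\nlop F_\gamma[u]=0$ rearranges, after adding and subtracting $\lap u-\lambda u$ inside the supremum, to the fixed-point problem $u=\Phi[u]$, where $\Phi[u]\in\sobh2(\W)\meet\sobhz1(\W)$ is the unique solution --- supplied by the linear theory above --- of $\lap\Phi[u]-\lambda\,\Phi[u]=-\sup_{\alpha\in\mathcal A}\bigl((\linop L_\gamma^\alpha-\lap+\lambda)u-\gamma^\alpha f^\alpha\bigr)$. Using $\norm{\sup_\alpha a_\alpha-\sup_\alpha b_\alpha}\leq\sup_\alpha\norm{a_\alpha-b_\alpha}$, the estimate $\Norm{\cdot}_\star\leq\Norm{\lap\cdot-\lambda\,\cdot}_{\leb2(\W)}$, and the Cordes inequality, I obtain $\Norm{\Phi[u]-\Phi[u']}_\star\leq\sqrt{1-\varepsilon}\,\Norm{u-u'}_\star$ for all $u,u'$; since $\sqrt{1-\varepsilon}<1$, Banach's fixed-point theorem gives a unique $u\in\sobh2(\W)\meet\sobhz1(\W)$ with $\nlop F_\gamma[u]=\nlop F[u]=0$ a.e.\ in $\W$, and undoing the reduction of \S\ref{sec:homogeneous-Dirichlet} delivers the theorem. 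I expect the Cordes algebraic inequality to be the main obstacle; once it is in hand, the linear theory on convex domains and the contraction argument are essentially bookkeeping, the only remaining subtlety being the well-definedness of $\nlop F_\gamma$ as an $\leb2(\W)$-valued map, which rests on continuity of the data and compactness of $\mathcal A$.
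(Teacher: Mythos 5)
Your argument is correct and coincides with the proof the paper relies on: the theorem is not proved in the paper but cited from \citet{Sme&Sul-2}, whose argument is exactly your route --- renormalisation by the weights $\gamma^\alpha$, the pointwise Cordes inequality with constant $\sqrt{1-\varepsilon}$, the Miranda--Talenti estimate on convex domains giving $\Norm{v}_\star\leq\Norm{\lap v-\lambda v}_{\leb2(\W)}$, and a Banach fixed-point contraction in the $\lambda$-weighted $\sobh2$-norm. No gaps worth flagging, so there is nothing further to add.
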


\begin{Obs}[strong solution of the nonhomogeneous equation]
  \label{rem:well-posedness- non-homogeneous}
  Since (\ref{eq:HJB0-homogeneous}) and (\ref{eq:HJB0-inhomogeneous}) are equivalent, 
  under the assumptions of Theorem~\ref{The:well-posedness-strong-solution} and for 
  $r \in \sobh{3/2}(\boundary \W)$, there exists a unique strong solution $u \in \sobh2(\W)$ to 
  nonhomogeneous HJB equation (\ref{eq:HJB0-inhomogeneous}).
\end{Obs}
\begin{Obs}[$\leb\infty(\W)$ vs $\cont0(\W)$ data]
  \label{rem:relaxing-continuity-uniformly-bounded}
  The sufficient requirement on data to be $\cont0(\W)$ in Theorem~\ref{The:well-posedness-strong-solution} can be relaxed to be just $\leb{\infty}(\W)$. This allows us to consider $\mat A$, $\vec b ,c$ in  $\leb\infty(\W;\cont0(\mathcal{A};X))$ for $X= \Symmatrices d$, $\R d$, $\reals$ respectively, while for $X = \reals$ we omit it in the notation. We also assume $f\in\leb2(\W;\cont0(\mathcal{A}))$.
\end{Obs}
\section{Semismooth Newton method}
\label{sec:smismooth-Newton-method}
Due to the nonalgebraic nonlinearity of the HJB equation, applying a
linearization methods like classical Newton is not trivial
\changes{
  even when the operator $\nlop F$ defined in \eqref{def:op:HJB} has
  everywhere an invertible derivative: the problem is that such a
  derivative cannot be explicity found, while it can be realised by
  finding the appropriate $\alpha=q(\vec x)$ for an apporpriate
  function $\funk q\W\mathcal A$.
}%
In this section, we describe the semismooth Newton method for
linearizing the fully nonlinear problem~(\ref{eq:HJB0-inhomogeneous}),
which involves nonsmooth nonlinear operators.
The basic tool here is the concept \indexemph{Newton derivative} of
$\nlop F$, a set-valued operator $\DNewton\nlop F$, introduced by
\citet{Ito&Kunisch}, which we define and discuss the associated
elements in
\S\ref{def:N-set}--\S\ref{def:Newton-differentiable-operaor}.
While \citet{Sme&Sul-2} demonstrated the Newton differentiability of
$\nlop F$ from $\sob2s(\W; \mesh T)$ to $\leb{l}(\W)$ with $1\leq l <
s \leq \infty$, Theorem~\ref{the:semi-smooth:HJB-op} extends this
result for $\nlop F$ from $\sobh2(\W)$ to $\leb2(\W)$ under an
assumption on the control set $\mathcal{A}$ described in
\S\ref{sec:assum-uniformly-convergent-coefficients}.
We then outline in
\S\ref{lem:boundedness-inverse-DF}%
some properties of the $\DNewton {\nlop F}[v]$'s members that will ensure
the superlinear convergence of the semismooth Newton method in 
\S\ref{the:superlinear:convergence}--\S\ref{cor:superlinear:convergence-HJB}.
We close in \S\ref{sec:Howard-algorithm} with a algorithmic presentation of the semismooth Newton linearization known as Howard's algorithm or policy iteration.
\subsection{Policy and set-valued maps}
Introduce the \indexemph{policy map set} \akaindexemph{control map set}
\begin{equation}
  \mathcal{Q}:= \left\lbrace \funk q\W{\mathcal{A}} \left\vert ~ q \text{ is measurable}\right\rbrace \right.,
\end{equation}
and the set-valued \indexemph{state-to-policy} operator
$\mathcal{N} :\sobh2(\W)\meet\sobhz1(\W) \rightrightarrows \mathcal{Q}$
by
\begin{equation}
  \label{def:N-set}
  \mathcal{N}[v]
:=
\setofsuch{
  q\in\mathcal{Q}
}{
  q(\vec x) \in \underset{\substack{\alpha \in \mathcal{A}}}
  \Argmax\qp{\linop L^\alpha v(\vec x) - f^\alpha(\vec x)}
  \text{ for a.e. }\vec x\in\W
}

  ,
\end{equation}
where by the notation $\mathcal M:X \rightrightarrows Y$, we mean that $\mathcal M$ is a powerset-valued, $\powersetof Y$-valued, map, meaning that for every $x \in X$, $M(x)$ is a subset of $Y$.
\begin{Lem}[state-to-policy operator is well defined and continuous]
  \label{lem:lim-alpha-j}
  For any $v \in \sobh2(\W) \meet \sobhz1(\W)$, the maps set $\mathcal{N}[v]$ is nonempty. 
  Moreover, if $\seqsinat vj$
  be a sequence such that $ v_j \rightarrow v$ in $\sobh2(\W)$ and 
  $\seqsinat qj $ 
  be a sequence in which $q_j \in \mathcal{N}[ v_j]$, then
  \begin{equation}
    \label{lim:alpha-j-a-e}
    \lim_{j \rightarrow \infty}  
    \underset
        {\substack{q \in  \mathcal{N}[v]
        }}
        \inf d_{\mathcal{A}}(q_j, q) =0 
        \text{ a.e. in }\W,
  \end{equation}
  with reminder that $d_{\mathcal{A}}$ is a metric on $\mathcal{A}$.
\end{Lem}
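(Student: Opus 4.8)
The plan is to prove the two assertions separately. Nonemptiness of $\mathcal N[v]$ is a measurable-selection statement, while \eqref{lim:alpha-j-a-e} will follow, at a.e.\ fixed $\vec x$, from the elementary stability of maximizer sets under uniform convergence on the compact metric space $\mathcal A$.

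For nonemptiness I would set $g(\vec x,\alpha):=\linop L^\alpha v(\vec x)-f^\alpha(\vec x)=\mat A^\alpha(\vec x)\frobinner\D^2v(\vec x)+\vec b^\alpha(\vec x)\inner\nabla v(\vec x)-c^\alpha(\vec x)v(\vec x)-f^\alpha(\vec x)$ and observe that, since $\D^2v$, $\nabla v$ and $v$ are finite a.e.\ and the data lie in $\leb\infty(\W;\cont0(\mathcal A;X))$ (see \S\ref{rem:relaxing-continuity-uniformly-bounded}), $g$ is a Carath\'eodory function: $g(\cdot,\alpha)$ is measurable for each $\alpha$ and $g(\vec x,\cdot)$ is continuous on $\mathcal A$ for a.e.\ $\vec x$. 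Compactness of $\mathcal A$ then makes $\mathcal A_v(\vec x):=\Argmax_{\alpha\in\mathcal A}g(\vec x,\alpha)$ nonempty and closed for a.e.\ $\vec x$, and $\nlop F[v,\cdot]=\sup_{\alpha}g(\cdot,\alpha)$ measurable (the supremum may be taken over a countable dense subset of $\mathcal A$). A measurable-selection theorem — Filippov's lemma, or Kuratowski--Ryll-Nardzewski applied to the measurable, closed-valued multifunction $\vec x\mapsto\mathcal A_v(\vec x)$ — then yields a measurable $\funk q\W{\mathcal A}$ with $q(\vec x)\in\mathcal A_v(\vec x)$ a.e., so $q\in\mathcal N[v]$ and $\mathcal N[v]\neq\emptyset$. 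Redefining such a $q$ at a single point moreover shows $\{q(\vec x):q\in\mathcal N[v]\}=\mathcal A_v(\vec x)$ for a.e.\ $\vec x$, whence $\inf_{q\in\mathcal N[v]}d_{\mathcal A}(q_j(\vec x),q(\vec x))=d_{\mathcal A}(q_j(\vec x),\mathcal A_v(\vec x))$ a.e.; this reduces \eqref{lim:alpha-j-a-e} to showing $d_{\mathcal A}(q_j(\vec x),\mathcal A_v(\vec x))\to0$ for a.e.\ $\vec x$.

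For the reduced statement the first step is to extract a subsequence (not relabelled) along which $v_j\to v$, $\nabla v_j\to\nabla v$ and $\D^2v_j\to\D^2v$ pointwise a.e.\ in $\W$. Fixing $\vec x$ in the full-measure set where these convergences and the reduction above hold and where $\alpha\mapsto(\mat A^\alpha(\vec x),\vec b^\alpha(\vec x),c^\alpha(\vec x))$ is continuous — hence bounded — on $\mathcal A$, set $g_j(\alpha):=\linop L^\alpha v_j(\vec x)-f^\alpha(\vec x)$ and $g(\alpha):=\linop L^\alpha v(\vec x)-f^\alpha(\vec x)$. The triangle inequality together with the $\alpha$-uniform bounds on $\mat A^\alpha(\vec x),\vec b^\alpha(\vec x),c^\alpha(\vec x)$ gives $g_j\to g$ \emph{uniformly} on $\mathcal A$, with $g$ continuous; this is exactly where the $\cont0(\mathcal A)$-dependence of the data on $\alpha$ enters. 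Uniform convergence to a continuous limit on a compact set yields $\max_{\mathcal A}g_j\to\max_{\mathcal A}g$ and upper semicontinuity of the maximizer sets: for every $\rho>0$, $\Argmax_{\alpha\in\mathcal A}g_j(\alpha)\subseteq\{\alpha\in\mathcal A:d_{\mathcal A}(\alpha,\mathcal A_v(\vec x))<\rho\}$ for all large $j$, since otherwise a compactness argument produces a subsequential limit $\alpha^*$ of $q_j(\vec x)$ with $d_{\mathcal A}(\alpha^*,\mathcal A_v(\vec x))\geq\rho$ and $g(\alpha^*)=\lim g_j(q_j(\vec x))=\lim\max_{\mathcal A}g_j=\max_{\mathcal A}g$, contradicting $\alpha^*\notin\mathcal A_v(\vec x)$. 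As $q_j(\vec x)\in\Argmax_{\alpha\in\mathcal A}g_j(\alpha)$, this gives $d_{\mathcal A}(q_j(\vec x),\mathcal A_v(\vec x))\to0$, hence \eqref{lim:alpha-j-a-e}.

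The step I expect to be the main obstacle is the measurable selection in the first part: upgrading the pointwise nonempty maximizer sets $\mathcal A_v(\vec x)$ to a \emph{single} measurable $q$ that maximizes at a.e.\ point — which is also what underlies the identity $\inf_{q\in\mathcal N[v]}d_{\mathcal A}(q_j(\cdot),q(\cdot))=d_{\mathcal A}(q_j(\cdot),\mathcal A_v(\cdot))$ a.e. — and this is precisely what the Carath\'eodory structure of $g$ secures. The convergence part is then routine, its only wrinkle being the extraction of an a.e.\ convergent subsequence of $(v_j)$ in $\sobh2(\W)$, which is harmless for the way this lemma is invoked in the superlinear-convergence analysis.
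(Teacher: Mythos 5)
Your proposal is correct and follows essentially the same route as the paper: a pointwise argument at a.e.\ $\vec x$ combining an a.e.-convergent subsequence of $(v_j)$ in $\sobh2(\W)$, compactness of $\mathcal A$, and passage to the limit in the optimality inequality, with nonemptiness of $\mathcal{N}[v]$ resting on a measurable-selection argument (which the paper simply cites from \citet[Thm.~10]{Sme&Sul-2}, while you supply it directly via the Carath\'eodory structure of $g$ and Kuratowski--Ryll-Nardzewski). Your explicit point-redefinition step identifying $\inf_{q\in\mathcal{N}[v]}d_{\mathcal{A}}(q_j(\vec x),q(\vec x))$ with the distance to the pointwise maximizer set is left implicit in the paper's contradiction argument, and the subsequence extraction you flag as a ``wrinkle'' is exactly what the paper's own proof does as well.
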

\begin{Proof}
  To prove that $\mathcal{N}[v]$ is nonempty, we refer to Theorem 10 of \cite{Sme&Sul-2}. 
  For almost every $\vec x \in\W$, we prove (\ref{lim:alpha-j-a-e}) by contradiction. 
  Suppose that there exist a sequence $\seqsinat vj$ convergent to $v$
  in $\sobh2(\W)\meet\sobhz1(\W)$, a subsequence 
  $\seqsinat qj$, $q_j\in\mathcal{N}[v_j]$ 
  and a real $\bar{\varepsilon}>0$ such that
  \begin{equation}
    d_{\mathcal{A}}(q_j(\vec x), q(\vec x))>\bar{\varepsilon} \quad 
    \text{for all } j\in \naturals \text{ and for all } q \in \mathcal{N}[ v].
  \end{equation}  
  It means that 
  $q_j(\vec x) \in \mathcal{A} \smallsetminus \underset{\substack{q \in \mathcal{N}[v]}} \bigcup \ball[\mathcal A]{q(\vec x)}{\bar\varepsilon}$
  where 
  \begin{equation}
    \ball[\mathcal A]{q(\vec x)}{\bar\varepsilon} := \left\lbrace \alpha \in \mathcal{A} 
    \left\vert ~ d_{\mathcal{A}}( \alpha, q(\vec x)) <\bar{\varepsilon}\right\rbrace \right. .
  \end{equation}
  Since
  $\mathcal{A} \smallsetminus \underset{\substack{q \in \mathcal{N}[v]}} \bigcup \ball[\mathcal A]{q(\vec x)}{\bar\varepsilon}$ 
  is compact, there exists a subsequence, which we pass without changing notation, 
  such that 
  $q_j(\vec x) \rightarrow \tilde{\alpha} \in \mathcal{A}
  \smallsetminus \underset{\substack{q \in \mathcal{N}[v]}} \bigcup
  \ball[\mathcal A]{q(\vec x)}{\bar\varepsilon}$.
  On the other hand, $q_j \in \mathcal{N}[v_j]$ implies that
  \begin{equation}
    \left( \linop L^{q_j} v_j - f^{q_j} \right)(\vec x) 
    \geq 
    \qp{\linop L^{q}  v_j - f^q}(\vec x)
    \Forall j
    \in
    \naturals.
  \end{equation}
  Since $ v_j \rightarrow v$ in $\sobh2(\W)$, again we pass to a 
  subsequence without changing notation such that 
  $v_j$, $\nabla v_j$, $\D^2v_j$ tend to $v$, $\nabla v$, $\D^2v$ respectively 
  pointwise almost everywhere in $\W$. Taking 
  the limit of the above inequality as $j \rightarrow \infty$ shows that 
  $ \tilde{\alpha} \in \mathcal{N}[v](\vec x)$. This is a contradiction with 
  $\tilde{\alpha} \in \mathcal{A} \smallsetminus \underset{\substack{q \in  \mathcal{N}[v]}} \bigcup \ball[\mathcal A]{q(\vec x)}{\bar\varepsilon}$. 
\end{Proof}
\begin{Obs}[approximating policy sequence selection]
  \label{rem:existence-sequence}
  Arguing by contradiction we can also show that if the sequence of functions
  $\seqsinat vj \subset \sobh2(\W) \meet \sobhz1(\W)$ converges 
  to $v$ in $\sobh2(\W)$, then for any $q\in\mathcal{N}[v]$ there exists a sequence
  of policies $\seqsinat qj$
  such that $q_j\in\mathcal{N}[v_j]$ and $q_j$ tends 
  to $q$ almost everywhere in $\W$.
\end{Obs}
\begin{Pro}[policy-to-coefficient lower semicontinuity]
  \label{cor:lim-coefficient}
  Recall that $\mat A$, $\vec b$, $c$ are in
  $\leb\infty(\W;\cont0(\mathcal{A};X))$ for $X=\Symmatrices{d}$,
  $\R{d}$, $\reals$ respectively.  Under the assumptions of
  Lemma~\ref{lem:lim-alpha-j}, we have
  \begin{equation}
    \label{lim:coefficient}
    \lim_{j \rightarrow \infty}  
    \inf_{q \in \mathcal{N}[v]}
    \left( 
    \norm{\mat A^{q_j} - \mat A^{q}} 
    +
    \norm{\vec b^{q_j}-\vec b^{q}}
    +
    \norm{c^{q_j} - c^q}
    \right) 
    =0
    \text{ a.e. in }\W.
  \end{equation}
\end{Pro}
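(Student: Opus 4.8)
The plan is to read \eqref{lim:coefficient} as the composition of the a.e.\ control convergence \eqref{lim:alpha-j-a-e}, already supplied by \cref{lem:lim-alpha-j}, with the continuity of the coefficients in the control variable. First I would fix a set $\W_0\subseteq\W$ of full measure on which, simultaneously, the conclusion \eqref{lim:alpha-j-a-e} of \cref{lem:lim-alpha-j} holds and the slice maps $\alpha\mapsto\mat A^\alpha(\vec x)$, $\alpha\mapsto\vec b^\alpha(\vec x)$, $\alpha\mapsto c^\alpha(\vec x)$ are genuinely continuous on $\mathcal A$; the latter is legitimate off a null set precisely because, by hypothesis, $\mat A,\vec b,c$ represent elements of $\leb\infty(\W;\cont0(\mathcal A;X))$, so a representative has continuous slices for a.e.\ $\vec x$. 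It then suffices to establish the vanishing of the limit at every $\vec x\in\W_0$.

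So I would fix $\vec x\in\W_0$ and $\varepsilon>0$. Since $\mathcal A$ is a compact metric space, the three continuous slices at $\vec x$ are uniformly continuous, hence there is $\delta>0$ with
\[
  d_{\mathcal A}(\alpha,\beta)<\delta
  \quad\Longrightarrow\quad
  \norm{\mat A^\alpha(\vec x)-\mat A^\beta(\vec x)}+\norm{\vec b^\alpha(\vec x)-\vec b^\beta(\vec x)}+\norm{c^\alpha(\vec x)-c^\beta(\vec x)}<\varepsilon .
\]
By \eqref{lim:alpha-j-a-e} there is $J\in\naturals$ such that $\inf_{q\in\mathcal N[v]}d_{\mathcal A}(q_j(\vec x),q(\vec x))<\delta$ whenever $j\geq J$; for each such $j$ I would choose $\tilde q_j\in\mathcal N[v]$ (depending on $j$ and $\vec x$) with $d_{\mathcal A}(q_j(\vec x),\tilde q_j(\vec x))<\delta$. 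Applying the implication above with $\alpha=q_j(\vec x)$, $\beta=\tilde q_j(\vec x)$ and then taking the infimum over $q\in\mathcal N[v]$ yields
\[
  \inf_{q\in\mathcal N[v]}\bigl(\norm{\mat A^{q_j}(\vec x)-\mat A^{q}(\vec x)}+\norm{\vec b^{q_j}(\vec x)-\vec b^{q}(\vec x)}+\norm{c^{q_j}(\vec x)-c^{q}(\vec x)}\bigr)<\varepsilon
\]
for all $j\geq J$. As $\varepsilon>0$ is arbitrary, the limit in \eqref{lim:coefficient} is $0$ at $\vec x$, and since $\W\setminus\W_0$ is null the proposition follows.

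I do not anticipate a genuine difficulty here: all the substance sits in \cref{lem:lim-alpha-j}, and the remainder is a "continuous image of a convergent sequence" argument on the compact control space. The points that deserve a line of care, rather than an obstacle, are: (i) being explicit that the $\leb\infty(\W;\cont0(\mathcal A;X))$ assumption is exactly what makes $\alpha\mapsto\mat A^\alpha(\vec x)$ and its companions uniformly continuous for a.e.\ $\vec x$; and (ii) recalling that $\inf_{q\in\mathcal N[v]}d_{\mathcal A}(q_j(\vec x),q(\vec x))$ is the distance of $q_j(\vec x)$ to the nonempty compact set $\operatorname*{Argmax}_{\alpha\in\mathcal A}\qp{\linop L^\alpha v(\vec x)-f^\alpha(\vec x)}$, which makes the choice of $\tilde q_j$ unambiguous and consistent with how \cref{lem:lim-alpha-j} was set up. If a measurable-in-$\vec x$ statement is wanted rather than a pointwise a.e.\ one, the measurable-selection argument already invoked for the nonemptiness of $\mathcal N[v]$ supplies it.
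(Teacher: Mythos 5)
Your proposal is correct and follows essentially the same route as the paper: it combines the almost-everywhere policy convergence \eqref{lim:alpha-j-a-e} from Lemma~\ref{lem:lim-alpha-j} with the ($\vec x$-wise) continuity of $\alpha\mapsto(\mat A^\alpha,\vec b^\alpha,c^\alpha)$ on the compact control space to transfer smallness of $\inf_{q\in\mathcal N[v]}d_{\mathcal A}(q_j(\vec x),q(\vec x))$ into smallness of the coefficient infimum. Your version merely makes explicit two points the paper leaves implicit — the uniform continuity of the slices on the compact $\mathcal A$ and the choice of a near-minimizing $\tilde q_j$ — which is a welcome, but not different, elaboration.
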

\begin{proof}
  We want to prove that for almost all $\vec x\in\W$
  \begin{equation}
    \label{def:limit}
    \Forall \varrho>0 ~~ \Exists N \in \naturals :
    \forall j\geq N
    \inf_{q \in \mathcal{N}[v]}
    \evalat{
      \norm{\mat A^{q_j} - \mat A^{q}} 
      +
      \norm{\vec b^{q_j}-\vec b^{q}}
      +
      \norm{c^{q_j} - c^q}
    }{\vec x}<\varrho.
  \end{equation}
  Since $\mat A, \vec b, c$ are continuous on $\mathcal{A}$, so for 
  almost all $\vec x \in \W$ we deduce that for any $\varrho>0$ there 
  is some $\delta>0$ such that 
  \begin{multline}
    \label{eq:proof:policy-to-coeff-lower-semicontinuity}
    \inf_{q \in \mathcal{N}[v]}d_{\mathcal{A}}(q_j(\vec x), q(\vec x))<\delta 
    \implies
    \inf_{q \in \mathcal{N}[v]} 
    \evalat{
      \norm{\mat A^{q_j} - \mat A^{q}} 
      +
      \norm{\vec b^{q_j}-\vec b^{q}}
      +
      \norm{c^{q_j} - c^q}
    }{\vec x}  
    <\varrho.
  \end{multline}
  Relation (\ref{lim:alpha-j-a-e}) guarantees that for large enough $N\in\naturals$
  \begin{equation}
    \label{ineq:policy-limit}
    \forall j \geq N\implies \inf_{q \in \mathcal{N}[v]}d_{\mathcal{A}}(q_j(\vec x), q(\vec x))<\delta .
  \end{equation}
  The assertion now follows from (\ref{eq:proof:policy-to-coeff-lower-semicontinuity}) 
  and
  (\ref{ineq:policy-limit}).  
\end{proof}
\subsection{Assumption on convergence of policy sequence}
\label{sec:assum-uniformly-convergent-coefficients}
\margnote[OL]{To check this section.}
While we do not use the results of Lemma~\ref{lem:lim-alpha-j} and
Proposition~\ref{cor:lim-coefficient} explicitly in this paper, we
have presented them to establish the foundation for the following
assumption.  Although in Lemma~\ref{lem:lim-alpha-j}, we have
demonstrated that the sequence in (\ref{lim:alpha-j-a-e}) converges
pointwisely, in the following assumption, we assume that it converges
uniformly. This assumption implies uniform convergence of the sequence
in~(\ref{lim:coefficient}) as well and this uniform convergence
ensures the Newton differentiability of the HJB operator, thereby
motivating the the following assumption.
\subsection{Assumption (Uniform convergence of policy sequences)}
\margnote[OL]{To check this section.}
\label{assum-uniformly-convergent-coefficients}
For $\seqsinat vj$ as a sequence such that $ v_j \rightarrow v$ in $\sobh2(\W)$ and 
$\seqsinat qj $ as a sequence in which $q_j \in \mathcal{N}[ v_j]$, we suppose that 
\begin{equation}
  \label{lim:alpha-j-uniformly}
  \lim_{j \rightarrow \infty} 
  \Norm{ 
    \underset
	{\substack{q \in  \mathcal{N}[v]
	}}
	\inf d_{\mathcal{A}}(q_j, q) 
  }_{\leb{\infty}(\W)}
  =0 .
\end{equation}
Under this assumption, by employing a reasoning process analogous to the proof presented in Proposition~(\ref{cor:lim-coefficient}), we can deduce 
the following stronger convergence 
\begin{equation}
  \label{lim-uniformly-convergent-coefficients}
  \lim_{j \rightarrow \infty }
  \Norm{
    \inf_{q \in \mathcal{N}[v]} 
    \left( \norm{\mat A^{q_j} - \mat A^q}
    +
    \norm{\vec b^{q_j} - \vec b^q}
    +
    \norm{c^{q_j} - c^q}
    \right)
  }_{\leb{\infty}(\W)} = 0.
\end{equation}
In accordance with \citet[Remark 1]{Sme&Sul-2}, the Newton differentiability of the HJB operator, denoted as $F:\sobh{s}(\W)\rightarrow \leb{l}(\W)$ and defined in (\ref{def:op:HJB}), generally cannot be guaranteed unless $s>l$. To overcome this limitation for case $s=l=2$, we make  Assumption~\ref{assum-uniformly-convergent-coefficients}, allowing us to ensure Newton differentiability. 
We would like to emphasize that Example 8.14 in \cite{Ito&Kunisch}, which is as an illustration of non-Newton differentiability in the case of $s\leq l$, does not satisfy Assumption~\ref{assum-uniformly-convergent-coefficients}. 
This is primarily a result of nonzero differences between the values of the sequence policy operators and their limit, which prevent uniform convergence.
\begin{Def}[Newton differentiable operator]
  \label{def:Newton-differentiable-operaor}
  Following \cite{Ito&Kunisch}, for $\linspace X$ and $\linspace Z$
  Banach spaces and $\emptyset\subsetneq B\subset\linspace X$, a map
  $F:B\subset \linspace X \rightarrow \linspace Z$ is called
  \indexemph{Newton differentiable} at $x\in B$ if and only if there
  exists an open neighborhood $N(x)\subset B$ and set-valued map with
  nonempty image $\DNewton F[x]: N(x) \rightrightarrows\linopss XZ$
  such that
  \begin{equation}
    \label{def:semi-smooth}
    \lim_{\Norm{e}_{\linspace X} \rightarrow 0 }
    \dfrac{1}{\Norm{e}_{\linspace X}}
    \sup_{D \in \DNewton  F[x+e]}
    \Norm{
      F[x+e] - F[x] -De
    }_{\linspace Z} 
    = 0,
  \end{equation}
  while $\linopss XZ$ denotes the set of all bounded linear
  operators from $\linspace X$ to $\linspace Z$.  The operator set
  $\DNewton{F}[x]$ is called the \indexemph{Newton derivative} of $F$
  at $x$ and any member of $\DNewton F[x]$ is \indexemph{a Newton
    derivative} of $F$ at $x$.  The operator $F$ is called
  \indexemph{Newton differentiable on} $B$ with Newton derivative
  $\DNewton{F}:B\rightrightarrows\linopss{X}Z$ if $F$ is Newton
  differentiable at each $x\in{B}$.

  The Newton derivative may not be unique, but when $\D F[x]$, the Fréchet derivative of $F$ at $x$, exists we have $\DNewton{F}[x]=\setof{\D F[x]}$.
\end{Def}
\subsection{The Newton derivative of the HJB operator}
We now introduce a suit-able candidate for the Newton derivative of the operator ${\nlop F}$ in (\ref{def:op:HJB}) at $v$ by
\begin{equation}
  \label{eq:semi-smooth-HJB}
  \begin{gathered}
    \DNewton{\nlop{F}}:\sobh2(\W)\meet\sobhz1(\W)%
    \rightrightarrows%
    \linops{\sobh2(\W)\meet\sobhz1(\W)}{\leb{2}(\W)}	
    \\
    \DNewton {\nlop F}[v]:=
\setofsuch{
  \linop L^q :=
  (\mat A^q \frobinner\D^2  + \vec b^q \inner \nabla  - c ^q ) 
}{
  q \in \mathcal{N}[v]
}
\forall v \in \sobh2(\W) \meet \sobhz1(\W) 
,
  \end{gathered}
\end{equation}
where $\mathcal{N}[v]$ is defined by (\ref{def:N-set}). 
The term $\DNewton{\nlop{F}}[v]$ in (\ref{eq:semi-smooth-HJB}) 
is a Newton derivative candidate for (\ref{def:op:HJB}). We investigate 
this issue in Theorem \ref{the:semi-smooth:HJB-op}, where we extend
\citet[Thm. 13]{Sme&Sul-2} under the Assumption \ref{assum-uniformly-convergent-coefficients} to cover the case
$\sobh2(\W)\meet\sobhz1(\W)$.
\begin{The}[Newton differentiability of the HJB operator]
  \label{the:semi-smooth:HJB-op}
  Suppose that $\W$ is a bounded convex domain, $\mathcal{A}$ is a
  compact metric space, $\mat A, \vec b, c$ are in 
  $\leb\infty(\W;\cont0(\mathcal{A};X))$ for $X=\Symmatrices{d}$,
  $\R{d}$, $\reals$ respectively and $f\in\leb2(\W;\cont0(\mathcal{A}))$, which satisfy Assumption~\ref{assum-uniformly-convergent-coefficients}.
  Then, the HJB operator ${\nlop F}$ defined by (\ref{def:op:HJB}) is
  Newton differentiable with Newton derivative $\DNewton{\nlop{F}}$,
  defined by (\ref{eq:semi-smooth-HJB}), on $\sobh2(\W)\meet\sobhz1(\W)$.
\end{The}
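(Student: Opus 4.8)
The plan is to verify the defining condition~\eqref{def:semi-smooth} of Definition~\ref{def:Newton-differentiable-operaor} for $F=\nlop F$ at an arbitrary point $x=v$, with $\linspace X=\sobh2(\W)\meet\sobhz1(\W)$, $\linspace Z=\leb2(\W)$, open neighbourhood $N(v)=\linspace X$, and the candidate set-valued map $\DNewton\nlop F$ from~\eqref{eq:semi-smooth-HJB}. So I fix $v$, take $e\in\linspace X$ and \emph{any} $q_e\in\mathcal N[v+e]$ (so that $\linop L^{q_e}$ is the generic element of $\DNewton\nlop F[v+e]$), and study the residual $R_e:=\nlop F[v+e]-\nlop F[v]-\linop L^{q_e}e\in\leb2(\W)$. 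The goal is $\Norm{R_e}_{\leb2(\W)}\big/\Norm e_{\linspace X}\to0$ as $\Norm e_{\linspace X}\to0$, \emph{uniformly} over the choice of $q_e$; in particular every estimate below must be independent of $q_e$.

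The first step is a pointwise two-sided estimate of $R_e$. Since $q_e(\vec x)$ attains the supremum defining $\nlop F[v+e](\vec x)$ for almost every $\vec x$, one has $\nlop F[v+e](\vec x)-\linop L^{q_e}e(\vec x)=(\linop L^{q_e}v-f^{q_e})(\vec x)$, hence $R_e(\vec x)=(\linop L^{q_e}v-f^{q_e})(\vec x)-\nlop F[v](\vec x)\leq0$; that is, $-R_e(\vec x)$ is exactly the suboptimality gap of the control value $q_e(\vec x)$ in the pointwise maximisation problem at $v$. Testing the same supremum against any $\alpha^{\ast}\in\mathcal N[v](\vec x)$ gives the complementary inequality $R_e(\vec x)\geq\qp{\linop L^{\alpha^{\ast}}-\linop L^{q_e}}e(\vec x)$. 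Combining the two, expanding $\linop L^{\alpha^{\ast}}-\linop L^{q_e}$, and using Cauchy--Schwarz together with a uniform $\leb\infty$ bound $M$ on $\mat A,\vec b,c$, I obtain pointwise
\[
\abs{R_e(\vec x)}\leq\rho_e(\vec x)\,\qp{\norm{\D^2e(\vec x)}+\norm{\nabla e(\vec x)}+\abs{e(\vec x)}}
\]
with $\rho_e(\vec x):=\inf_{\alpha^{\ast}\in\mathcal N[v](\vec x)}\bigl(\norm{\mat A^{\alpha^{\ast}}-\mat A^{q_e}}+\norm{\vec b^{\alpha^{\ast}}-\vec b^{q_e}}+\abs{c^{\alpha^{\ast}}-c^{q_e}}\bigr)(\vec x)\in[0,2M]$, and at the same time the competing bound
\[
\abs{R_e(\vec x)}\leq G(\vec x):=2M\qp{\norm{\D^2v}+\norm{\nabla v}+\abs v}(\vec x)+2\sup_{\alpha\in\mathcal A}\abs{f^{\alpha}(\vec x)},
\]
where $G\in\leb2(\W)$ because $v\in\sobh2(\W)$ and $f\in\leb2(\W;\cont0(\mathcal A))$. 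Both bounds hold for every admissible $q_e$.

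The second step is the passage to the limit, which I would carry out by contradiction: if~\eqref{def:semi-smooth} failed there would be $\gamma>0$, a sequence $e_j\to0$ in $\linspace X$, and $q_j\in\mathcal N[v+e_j]$ with $\Norm{R_{e_j}}_{\leb2(\W)}\geq\gamma\Norm{e_j}_{\linspace X}$. Passing to a subsequence I may assume $\D^2e_j,\nabla e_j,e_j\to0$ almost everywhere; then $v+e_j\to v$ in $\sobh2(\W)$ with $q_j\in\mathcal N[v+e_j]$, so Proposition~\ref{cor:lim-coefficient} (with $\alpha^{\ast}_j(\vec x)$ chosen to realise the infimum defining $\rho_{e_j}(\vec x)$ up to $1/j$, a selection justified through Lemma~\ref{lem:lim-alpha-j}) gives $\rho_{e_j}\to0$ a.e.\ in $\W$. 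Normalising $w_j:=e_j/\Norm{e_j}_{\linspace X}$, bounded in $\sobh2(\W)$, I extract $w_j\rightharpoonup w$ in $\sobh2(\W)$ with $\nabla w_j\to\nabla w$, $w_j\to w$ strongly in $\leb2(\W)$ by the Rellich theorem. Dividing the pointwise bound by $\Norm{e_j}_{\linspace X}$ and integrating, the lower-order contribution $\int_\W\rho_{e_j}^2\qp{\norm{\nabla w_j}+\abs{w_j}}^2$ tends to $0$ by dominated convergence (using $0\leq\rho_{e_j}\leq2M$, $\rho_{e_j}\to0$ a.e., and the strong $\sobh1$ convergence), so the whole difficulty is concentrated in the Hessian term $\int_\W\rho_{e_j}^2\norm{\D^2w_j}^2$.

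That Hessian term is the main obstacle, and is exactly where this result goes beyond \cite[Thm.~13]{Sme&Sul-2}: there the target is $\leb l(\W)$ with $l<s$ and $\D^2 v\in\leb s(\W)$, so the integrability gap lets one peel off an $\leb p$-convergent coefficient factor by H\"older's inequality, whereas here $l=s=2$, the family $\setof{\norm{\D^2 w_j}^2}_j$ is merely bounded in $\leb1(\W)$, and it may concentrate on a Lebesgue-null set on which $\rho_{e_j}$ need not decay. The way I would handle this is to split $\W=\setof{\rho_{e_j}\leq t}\cup\setof{\rho_{e_j}>t}$ for a small parameter $t>0$: on the first set use $\abs{R_{e_j}}\leq t\,\qp{\norm{\D^2 e_j}+\norm{\nabla e_j}+\abs{e_j}}$, contributing at most a fixed constant times $t^2\Norm{e_j}_{\linspace X}^2$; on the second set use $\abs{R_{e_j}}\leq G$, noting $\abs{\setof{\rho_{e_j}>t}}\to0$ as $j\to\infty$, so that, by absolute continuity of $\int G^2$ together with the Chebyshev bound $\abs{\setof{\norm{\D^2 e_j}>s}}\leq\Norm{e_j}_{\linspace X}^2/s^2$, the integral $\int_{\setof{\rho_{e_j}>t}}G^2$ can be controlled relative to $\Norm{e_j}_{\linspace X}^2$; letting $j\to\infty$ and then $t\downarrow0$ contradicts $\gamma>0$. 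Making these two rates match is the technical heart of the proof. As a consistency check, when $\nlop F$ happens to be Fr\'echet differentiable at $v$ (for instance $\mathcal A$ a singleton, or more generally when $\mathcal N[v]$ is $\leb\infty$-stable near $v$), $\mathcal N[v]$ collapses and $\DNewton\nlop F[v]=\setof{\D\nlop F[v]}$, in accordance with the closing remark of Definition~\ref{def:Newton-differentiable-operaor}.
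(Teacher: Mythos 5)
Your pointwise analysis is the same as the paper's: the identity $R_e=\linop L^{q_e}v-f^{q_e}-\nlop F[v]\leq 0$, the complementary lower bound obtained by testing the supremum with a policy in $\mathcal N[v]$, the resulting estimate $\abs{R_e}\leq\rho_e\qp{\norm{\D^2e}+\norm{\nabla e}+\abs{e}}$ with $\rho_e$ the pointwise infimum of the coefficient gaps, and the a.e.\ vanishing of $\rho_{e_j}$ via Lemma~\ref{lem:lim-alpha-j} and Proposition~\ref{cor:lim-coefficient} correspond exactly to \eqref{ineq:smaller:zero}--\eqref{ineq:norm-inf} and \eqref{def:Gj} in the paper's proof. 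You also correctly identify that the whole difficulty is the possible concentration of $\norm{\D^2 e_j}^2/\Norm{e_j}^2_{\sobh2(\W)}$ on the shrinking sets where $\rho_{e_j}$ is not yet small; this is precisely the obstruction that makes the critical case $l=s=2$ harder than \citet[Thm.~13]{Sme&Sul-2}.

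The gap is in the step you call the technical heart: the level-set splitting does not close the contradiction argument. On $\{\rho_{e_j}\leq t\}$ you indeed get a contribution $\leq C t^2\Norm{e_j}^2_{\sobh2(\W)}$, but on $\{\rho_{e_j}>t\}$ your only bound is by the fixed function $G\in\leb2(\W)$, and $\int_{\{\rho_{e_j}>t\}}G^2\to0$ only in \emph{absolute} terms; the contradiction hypothesis requires it to be $o\qp{\Norm{e_j}^2_{\sobh2(\W)}}$, a quantity that itself tends to zero at an arbitrary and possibly much faster rate (e.g.\ $\abs{\{\rho_{e_j}>t\}}\sim 1/j$ while $\Norm{e_j}^2_{\sobh2(\W)}\sim e^{-j}$ is perfectly compatible with your hypotheses, and then $\frac{\gamma^2}{2}\Norm{e_j}^2\leq\int_{\{\rho_{e_j}>t\}}G^2$ yields no contradiction). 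Neither absolute continuity of $\int G^2$ nor the Chebyshev bound links the two rates: Chebyshev controls the measure of $\{\norm{\D^2e_j}>s\}$, not the mass of $G^2$, nor of $\norm{\D^2e_j}^2/\Norm{e_j}^2$, carried by $\{\rho_{e_j}>t\}$. After dividing by $\Norm{e_j}^2_{\sobh2(\W)}$ what is needed is equi-integrability, uniform in $j$, of $\norm{\D^2 e_j}^2/\Norm{e_j}^2_{\sobh2(\W)}$ -- exactly the non-concentration you flagged, which the splitting does not produce. The paper avoids working at the critical exponent: it first proves the limit \eqref{lim:semi-smooth-2-zeta-norm} with codomain $\leb{2-\zeta}(\W)$, $\zeta>0$, where the generalized H\"older inequality \eqref{ineq:Norm} factors out the full $\Norm{e_j}_{\sobh2(\W)}$ and isolates the coefficient gap $G_j$ in $\leb{\fracl{2(2-\zeta)}{\zeta}}(\W)$, so that dominated convergence applies with no concentration issue, and only afterwards passes to $\leb2(\W)$ by sending $\zeta\to0$ in \eqref{lim:semi-smooth-2-norm}. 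To repair your route you would need either such a detour through a reduced exponent, or an additional higher-integrability/equi-integrability input for $\D^2 e_j/\Norm{e_j}_{\sobh2(\W)}$, which is not available at $\leb2$.
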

\begin{Proof}
  Suppose that $\seqsinat ej \subset \sobh2(\W) \meet \sobhz1(\W)$ be a sequence with $\Norm{e_j}_ {\sobh2(\W)} \rightarrow 0$ and $v \in \sobh2(\W) \meet \sobhz1(\W)$. 
  Let $q_j \in \mathcal{N}[v+e_j]$ and $q \in \mathcal{N}[v]$.
  We have 
  \begin{equation}
    \label{ineq:monotone:smaller:zero}
          {\nlop F}[v + e_j] - {\nlop F}[v] - \linop L^{q_j} e_j = \linop L^{q_j} v - f^{q_j} - {\nlop F}[v] \leq 0 
          \text{ a.e. in } \W.
  \end{equation}
  On the other hand, we get
  \begin{equation}
    \label{ineq:monotone:bigger:zero}
    \begin{aligned}
      0 &\leq {\nlop F}[v+e_j] - ( \linop L^{q} (v+ e_j) - f^{q}) = {\nlop F}[v+e_j] - {\nlop F}[v] -  \linop L^q e_j
      \\
      & = {\nlop F}[v+e_j] - {\nlop F}[v] -  \linop L^{q_j} e_j + \linop L^{q_j} e_j - \linop L^q e_j
      \text{ a.e. in } \W.
    \end{aligned}
  \end{equation}
  (\ref{ineq:monotone:bigger:zero}) implies the first and (\ref{ineq:monotone:smaller:zero}) 
  implies the second inequality of the following
  \begin{equation}
    \label{ineq:smaller:zero}
    \linop L^{q} e_j - \linop L^{q_j} e_j \leq {\nlop F}[v+e_j] - {\nlop F}[v] - \linop L^{q_j} e_j
    \leq 0  \text{ a.e. in } \W.
  \end{equation}
  By applying the absolute value to both sides, we get
  \begin{multline}
    \label{ineq:norm}
    \norm{{\nlop F}[v+e_j] - {\nlop F}[v] - \linop L^{q_j} e_j}
    \leq
    \norm{\linop L^{q} e_j - \linop L^{q_j} e_j}
    \\
    =
    \norm{(\mat A^{q_j} - \mat A^q) \frobinner\D^2 e_j 
      +
      (\vec b^{q_j} - \vec b^\alpha)\inner \nabla e_j
      -
      (c^{q_j} - c^q)e_j
    } 
    \text{ a.e. in } \W.
  \end{multline}
  There exists $\constref{ineq:norm-1} > 0$ depending on the dimension $d$, 
  such that 
  \begin{multline}
    \label{ineq:norm-1}
    \norm{{\nlop F}[v+e_j] - {\nlop F}[v] - \linop L^{q_j} e_j}
    \\
    \leq
    \constref{ineq:norm-1}
    \left( \norm{\mat A^{q_j} - \mat A^q}
    +
    \norm{\vec b^{q_j} - \vec b^q}
    +
    \norm{c^{q_j} - c^q}
    \right)
    \left(
    \norm{\D^2 e_j} + \norm{\grad e_j} + \norm{e_j}
    \right)
    \text{ a.e. in } \W.
  \end{multline}
  Therefore, we have
  \begin{multline}
    \label{ineq:norm-inf}
    \norm{{\nlop F}[v+e_j] - {\nlop F}[v] - \linop L^{q_j} e_j}
    \\
    \leq
    \constref{ineq:norm-1}
    \inf_{q \in \mathcal{N}[v]} 
    \left( \norm{\mat A^{q_j} - \mat A^q}
    +
    \norm{\vec b^{q_j} - \vec b^q}
    +
    \norm{c^{q_j} - c^q}
    \right)
    \left(
    \norm{\D^2 e_j} + \norm{\grad e_j} + \norm{e_j}
    \right)
    \\
    \text{ a.e. in } \W.
  \end{multline}
  By taking $\leb{2}(\W)$-norm of the both sides and then using the generalized 
  H\"older %
  inequality on the right hand side, we arrive at %
  \begin{multline}
    \label{ineq:Norm}
    \Norm{{\nlop F}[v+e_j] - {\nlop F}[v] - \linop L^{q_j} e_j}_{\leb{2}(\W)}
    \\
    \leq
    \constref{ineq:norm-1}
    \Norm{
      \inf_{q \in \mathcal{N}[v]} 
      \left( \norm{\mat A^{q_j} - \mat A^q}
      +
      \norm{\vec b^{q_j} - \vec b^q}
      +
      \norm{c^{q_j} - c^q}
      \right)
    }_{\leb{\infty}(\W)}
    \Norm{e_j}_{\sobh2(\W)}.
  \end{multline}
  After dividing the both sides by $\Norm{e_j}_{\sobh2(\W)}$, 
  the limit (\ref{lim-uniformly-convergent-coefficients}) implies that 
  \begin{equation}
    \label{lim:semi-smooth-2-zeta-norm}
    \lim_{\Norm{e_j}_{\sobh2(\W)}\to0}
    \dfrac{1}{\Norm{e_j}_{\sobh2(\W)}}
    \Norm{{\nlop F}[v+e_j] - {\nlop F}[v] - \linop L^{q_j} e_j}_{\leb{2-\zeta}(\W)}
    = 0
    .
  \end{equation}
  The assertion now follows from that (\ref{lim:semi-smooth-2-zeta-norm}) %
  holds for any $q_j \in \mathcal{N}[v_j]$.
\end{Proof}
\begin{Obs}[relaxing assumptions of Theorem~\ref{the:semi-smooth:HJB-op}]
  \label{rem:relaxing-assumptions-of-Newton-differentiability}
  We note two points relating to Theorem~\ref{the:semi-smooth:HJB-op}.
  \begin{itemize}
  \item
    Apart from Assumption~(\ref{assum-uniformly-convergent-coefficients}) we did not require the ellipticity nor the Cordes condition to
    show Newton differentiability of $\nlop F$ of (\ref{def:op:HJB})
    on the infinite-dimensional space. One may employ the semismooth
    Newton method to HJB equations under weaker conditions.
  \item
    It is easy to check that we can also show this concept for the HJB
    operator on general space $\sobh2(\W)$, which allows more general
    Dirichlet boundary conditions and possibly oblique (second)
    boundary conditions.
  \end{itemize}
\end{Obs}
\begin{Lem}[bounded invertibility of the Newton derivatives]
  \label{lem:boundedness-inverse-DF}
  Under the assumptions of Theorem~\ref{the:semi-smooth:HJB-op} on
  $\W$, $\mat{A},\vec{b},c$ and $f$ and assuming that $\mat{A},\vec{b},c$
  satisfy
  (\ref{def:uniformly-elliptic}) and either
  (\ref{def:general-Cordes-condition}) with $\lambda >0$ or
  (\ref{def:special-Cordes-condition}) with $\lambda =0$ when $\vec b = 0$, $c = 0$, we have that for any $v \in \sobh2(\W) \meet \sobhz1(\W)$ the all members $\linop L^q \in \DNewton {\nlop F}[v]$, which
  $\DNewton {\nlop F}[v]$ is defined by (\ref{eq:semi-smooth-HJB}), are
  nonsingular and $\Norm{{\linop L^q}^{-1}}$ are bounded.
\end{Lem}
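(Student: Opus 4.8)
The plan is to reduce the assertion to the well-posedness theory for a \emph{single} linear second-order elliptic operator in nondivergence form satisfying the Cordes condition, as established in \cite{Sme&Sul-2} and revisited in \cite{LakkisMousavi:21:article:A-least-squares}, and then to verify that every constant appearing there is insensitive to the particular policy $q$. So fix $v\in\sobh2(\W)\meet\sobhz1(\W)$ and $q\in\mathcal N[v]$ (nonempty by Lemma~\ref{lem:lim-alpha-j}), and abbreviate $\mat A^q(\vec x):=\mat A^{q(\vec x)}(\vec x)$, and likewise $\vec b^q$ and $c^q$; these are measurable in $\vec x$ because $q$ is measurable and $\mat A,\vec b,c$ are Carath\'eodory functions by the $\leb\infty(\W;\cont0(\mathcal A;X))$ hypothesis. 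Since $q$ takes values in $\mathcal A$, the pointwise-a.e.\ bounds (\ref{def:uniformly-elliptic}) and (\ref{def:general-Cordes-condition}) --- or (\ref{def:special-Cordes-condition}) in the homogeneously second-order branch --- which are assumed uniformly over $\alpha\in\mathcal A$, pass to $(\mat A^q,\vec b^q,c^q)$ with \emph{exactly the same} constants $\constref[\hA,\flat]{def:uniformly-elliptic}$, $\constref[\hA,\sharp]{def:uniformly-elliptic}$, $\lambda$ and $\varepsilon$.

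Next I would carry out the standard Cordes renormalisation for this frozen operator. One sets $\gamma^q:=\qp{\trace\mat A^q+c^q/\lambda}\big/\qp{\norm{\mat A^q}^2+\norm{\vec b^q}^2/(2\lambda)+(c^q/\lambda)^2}$, respectively $\gamma^q:=\trace\mat A^q\big/\norm{\mat A^q}^2$ when $\lambda=0$; uniform ellipticity of $\mat A^q$, nonnegativity and $\leb\infty$-boundedness of $c^q$, and $\leb\infty$-boundedness of $\vec b^q$ give two-sided bounds $0<\gamma_\flat\leq\gamma^q\leq\gamma_\sharp<\infty$ a.e., with $\gamma_\flat,\gamma_\sharp$ depending only on the data in (\ref{def:uniformly-elliptic}), on $\Norm{\vec b}_{\leb\infty(\W;\cont0(\mathcal A))}$, $\Norm{c}_{\leb\infty(\W;\cont0(\mathcal A))}$ and on $\lambda$, but \emph{not} on $q$ or $v$. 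Then two facts combine: (i) on the convex domain $\W$ the Miranda--Talenti estimate $\Norm{\D^2 w}_{\leb2(\W)}\leq\Norm{\lap w}_{\leb2(\W)}$ holds for all $w\in\sobh2(\W)\meet\sobhz1(\W)$, so that, after integrating by parts with vanishing trace, $w\mapsto\lap w-\lambda w$ is an isomorphism of $\sobh2(\W)\meet\sobhz1(\W)$ onto $\leb2(\W)$ with $\Norm{\lap w-\lambda w}_{\leb2(\W)}^2\geq\Norm{\D^2 w}_{\leb2(\W)}^2+2\lambda\Norm{\nabla w}_{\leb2(\W)}^2+\lambda^2\Norm{w}_{\leb2(\W)}^2$; and (ii) the Cordes inequality is precisely what makes $\Norm{\gamma^q\linop L^q w-\qp{\lap w-\lambda w}}_{\leb2(\W)}$ at most $\sqrt{1-\varepsilon}$ times the square root of that same right-hand side. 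Consequently the bilinear form
\[
  a^q(w,z):=\int_\W\bigl(\gamma^q\,\linop L^q w\bigr)\bigl(\lap z-\lambda z\bigr)
\]
on $\qp{\sobh2(\W)\meet\sobhz1(\W)}^2$ is bounded and coercive, with coercivity constant no smaller than $\constcoercivityuglambdapositive$ (here $\gamma$ denotes $\gamma^q$, and $\sqrt{1-\varepsilon}<1$ makes this positive). By Lax--Milgram, for every $g\in\leb2(\W)$ the problem $\linop L^q w=g$, equivalent to requiring $a^q(w,z)=\int_\W\bigl(\gamma^q g\bigr)\bigl(\lap z-\lambda z\bigr)$ for all admissible $z$, has a unique solution $w\in\sobh2(\W)\meet\sobhz1(\W)$ with $\Norm{w}_{\sobh2(\W)}\leq C\Norm{g}_{\leb2(\W)}$. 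Hence each $\linop L^q\in\DNewton{\nlop F}[v]$ is a bijection from $\sobh2(\W)\meet\sobhz1(\W)$ onto $\leb2(\W)$ with $\Norm{\qp{\linop L^q}^{-1}}\leq C$; and since $C$ sees the coefficients only through the $\alpha$-uniform quantities named above, this bound is in fact uniform over $q\in\mathcal N[v]$ and over $v$.

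The analytic content coincides with the single-operator case treated in \cite{Sme&Sul-2,LakkisMousavi:21:article:A-least-squares}, so the one place where genuine work is required --- and where I expect the only real obstacle to lie --- is the uniformity bookkeeping: ensuring $C$ never depends on the selection $q$ nor on $v$, only on the $\alpha$-uniform ellipticity and Cordes parameters and on the $\leb\infty(\W;\cont0(\mathcal A))$-norms of $\mat A,\vec b,c$, and carrying the $\lambda>0$ branch and the homogeneously second-order $\lambda=0$ branch (where one instead uses the variant of Miranda--Talenti adapted to (\ref{def:special-Cordes-condition})) in parallel throughout. No monotonicity or Browder--Minty-type argument is needed here; a single appeal to Lax--Milgram for the renormalised form suffices.
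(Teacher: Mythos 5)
Your proposal is correct, and it reaches the conclusion by a more explicit route than the paper. The paper's own proof is three lines: for the frozen policy $\alpha=q(\cdot)$ it invokes Theorem~\ref{The:well-posedness-strong-solution} (applied to the linear operator with coefficients $\mat A^q,\vec b^q,c^q$, which inherit \eqref{def:uniformly-elliptic} and the Cordes condition with the same constants), deduces that $\linop L^q$ is a bijection of $\sobh2(\W)\meet\sobhz1(\W)$ onto $\leb2(\W)$, notes that the $\leb\infty$ coefficient bounds make $\linop L^q$ bounded, and then gets boundedness of $(\linop L^q)^{-1}$ from Banach's inverse mapping theorem. You instead unpack the cited well-posedness result: Cordes renormalisation by $\gamma^q$, the Miranda--Talenti estimate on the convex domain, and Lax--Milgram for the renormalised form, with the $\lambda>0$ and $\lambda=0$ branches run in parallel. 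The underlying mathematics is the same (your renormalised-form argument is essentially the proof of that theorem in \citet{Sme&Sul-2}), but your route buys something the paper's soft argument does not state: a quantitative bound on $\Norm{(\linop L^q)^{-1}}$ depending only on the $\alpha$-uniform ellipticity and Cordes parameters and the $\leb\infty(\W;\cont0(\mathcal A;X))$ norms of the data, hence uniform over $q\in\mathcal{N}[v]$ and over $v$. That uniformity is exactly what Theorem~\ref{the:superlinear:convergence} consumes (``$\Norm{D^{-1}}$ are bounded'' throughout a neighbourhood of the solution), so your version is better matched to its later use; the price is re-proving, rather than citing, the linear Cordes well-posedness, including the surjectivity of $w\mapsto\lap w-\lambda w$ needed to pass from the Lax--Milgram solution back to $\linop L^q w=g$ a.e., which you correctly record.
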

\begin{Proof}
  For any $v \in \sobh2(\W) \meet \sobhz1(\W)$ and $q \in \mathcal{N}[v]$,
  thanks to Theorem~\ref{The:well-posedness-strong-solution} for a fixed control map 
  $\alpha = q(\cdot)$, we deduce that $\left\lbrace \linop L^q\right\rbrace $ are invertible and also bijective. 
  It follows from $\mat A, \vec b, c \in \leb\infty(\W;\cont0(\mathcal{A};X))$ 
  for $X= \Symmatrices d, \R d, \reals $ respectively that $\left\lbrace \linop L^q\right\rbrace $ are bounded. 
  Therefore, Banach's inverse mapping theorem implies that $\left\lbrace {\linop L^q}^{-1}\right\rbrace $ are 
  also bounded. 
\end{Proof} 
\begin{The}[superlinear convergence of the semismooth Newton method]
  \label{the:superlinear:convergence}
  Suppose that the operator $ F$ is Newton differentiable with Newton
  derivative $\DNewton F$
  in an open neighborhood $U$ of $u$, solution of $ F[u]=0$. If
  for any $\tilde{u} \in U$, all $\D \in \DNewton F[\tilde{u}]$
  are invertible and 
  $\Norm{\inverseof{D}}$ 
  are bounded, then the iteration 
  \begin{equation} 
    \label{eq:recursive}
    u_{n+1} = u_n - D^{-1}_n F[u_n], \quad D_n \in \DNewton  F[u_n]
  \end{equation}
  converges superlinearly to $u$ provided $u_0$ is sufficiently close to $u$.
\end{The}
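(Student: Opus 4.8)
The plan is to run the standard fixed-neighbourhood argument for local superlinear convergence of semismooth Newton iterations, using only the one-sided approximation property \eqref{def:semi-smooth} together with the uniform-invertibility hypothesis. Write $\linspace X$ and $\linspace Z$ for the Banach spaces on which $F$ acts, let $C$ be a uniform bound for $\Norm{D^{-1}}$ valid for every $D\in\DNewton F[\tilde u]$ and every $\tilde u\in U$, and abbreviate $e_n:=u_n-u$. Since $F[u]=0$ and each $D_n$ is nonsingular, the iteration \eqref{eq:recursive} rearranges into
\begin{equation}
  e_{n+1}
  =
  e_n-D_n^{-1}F[u_n]
  =
  D_n^{-1}\bigl(D_n e_n-\bigl(F[u+e_n]-F[u]\bigr)\bigr),
\end{equation}
so that the \emph{master estimate}
\begin{equation}
  \Norm{e_{n+1}}_{\linspace X}
  \leq
  C\,\Norm{F[u+e_n]-F[u]-D_n e_n}_{\linspace Z}
\end{equation}
holds whenever $u_n\in U$.

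The second step feeds this into Newton differentiability of $F$ at $x=u$. Because $D_n\in\DNewton F[u_n]=\DNewton F[u+e_n]$, the right-hand side of the master estimate is bounded by $C\sup_{D\in\DNewton F[u+e_n]}\Norm{F[u+e_n]-F[u]-D e_n}_{\linspace Z}$, i.e.\ by $C\,\Norm{e_n}_{\linspace X}$ times the expression under the limit in \eqref{def:semi-smooth}. Fixing $\vartheta\in(0,1)$, that limit being zero yields a radius $\delta>0$ — which I would further shrink so that $\Norm{w-u}_{\linspace X}<\delta$ forces both $w\in U$ and $w$ into the neighbourhood on which $\DNewton F$ is defined — such that the said expression is $<\vartheta/C$ for $0<\Norm{e}_{\linspace X}<\delta$. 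Combined with the master estimate this gives $\Norm{e_{n+1}}_{\linspace X}\leq\vartheta\,\Norm{e_n}_{\linspace X}$ whenever $0<\Norm{e_n}_{\linspace X}<\delta$ (the case $e_n=0$ being trivial, since the iteration has then converged). An induction on $n$, started from any $u_0$ with $\Norm{e_0}_{\linspace X}<\delta$, keeps every iterate in that ball and gives $\Norm{e_n}_{\linspace X}\leq\vartheta^{\,n}\Norm{e_0}_{\linspace X}\to0$, hence $u_n\to u$.

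For the superlinear rate I would set, whenever $e_n\neq0$,
\begin{equation}
  \eta_n
  :=
  \frac{1}{\Norm{e_n}_{\linspace X}}
  \sup_{D\in\DNewton F[u+e_n]}\Norm{F[u+e_n]-F[u]-D e_n}_{\linspace Z},
\end{equation}
so the master estimate reads $\Norm{e_{n+1}}_{\linspace X}\leq C\,\eta_n\,\Norm{e_n}_{\linspace X}$. Since $\Norm{e_n}_{\linspace X}\to0$ by the previous step, \eqref{def:semi-smooth} forces $\eta_n\to0$, whence $\Norm{e_{n+1}}_{\linspace X}/\Norm{e_n}_{\linspace X}\to0$, which is exactly the claimed superlinear convergence.

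I expect the only genuinely delicate point to be the bookkeeping of neighbourhoods: rearranging the iteration presupposes that $D_n$ exists and is boundedly invertible at $u_n$, which the hypothesis grants only for $u_n\in U$, so one must know a priori that the iterates never leave $U$. This mild circularity is broken by the induction in the second step, where the contraction factor $\vartheta<1$ confines $u_n$ to the ball of radius $\delta$ about $u$, a ball contained in $U$ and in the domain of $\DNewton F$, on which both the uniform bound on $\Norm{D^{-1}}$ and the quantitative form of \eqref{def:semi-smooth} are in force; everything else — the two displayed rearrangements and the passage to the limit — is routine.
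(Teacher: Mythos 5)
Your argument is correct: the master estimate $\Norm{e_{n+1}}\leq C\Norm{F[u+e_n]-F[u]-D_ne_n}$, the induction keeping the iterates in a ball on which \eqref{def:semi-smooth} and the uniform bound on $\Norm{D^{-1}}$ apply, and the passage to the superlinear rate are exactly the standard proof. The paper itself does not prove this theorem but delegates it to \citet[Thm.~8.16]{Ito&Kunisch}, whose proof is essentially the one you reconstructed, so there is nothing to add beyond noting that your uniform constant $C$ corresponds to the intended reading of the hypothesis that $\Norm{D^{-1}}$ is bounded uniformly over $\tilde u\in U$.
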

For a proof see \citet[Thm.~8.16]{Ito&Kunisch}.
\begin{Cor}[superlinear convergence of the linearized HJB equation]
  \label{cor:superlinear:convergence-HJB}
  Let the operator ${\nlop F}$ as (\ref{def:op:HJB}). Lemma~\ref{lem:boundedness-inverse-DF} 
  and Theorem~\ref{the:superlinear:convergence}
  imply that by choosing an initial guess $u_0$ sufficiently close to the exact solution 
  $u \in \sobh2(\W) \meet \sobhz1(\W)$ of (\ref{eq:HJB-homogeneous}), the solution 
  of recursive problem~(\ref{eq:recursive}) converges superlinearly to $u$.
  By acting $D_n =\linop L^{q_n}$ on the both sides 
  of (\ref{eq:recursive}), it can be rewritten as 
  \begin{equation}
    \label{eq:recursive-practical-a-e}
    \linop L^{q_n} u_{n+1} = f^{q_n}
    \text{ a.e. in }\W.
  \end{equation}
\end{Cor}
\subsection{Howard's algorithm}
\label{sec:Howard-algorithm}
We now present the recursive problem~(\ref{eq:recursive-practical-a-e}) algorithmically, known as Howard's algorithm or policy iteration at the continuous level. Howard's algorithm alternately computes a control and a value function to generate a sequence that converges to the solution of (\ref{eq:HJB-homogeneous}). The process is repeated until the convergence criterion is satisfied. This algorithm can be interpreted as a Newton extension to the equation with a nonsmooth operator (\ref{eq:HJB-homogeneous}). Howard's algorithm for (\ref{eq:HJB-homogeneous}) with initial guess of $u_0$ is described as follows.
\begin{algorithmic}
  \State {Compute the control map $q_0$ such that
    \begin{equation}
      \label{eq:optimize0-Howard's-algorithm}
      q_{0}(\vec x) \in \underset
      {\substack{\alpha \in \mathcal{A}
      }}
      \Argmax \left(  \linop L^\alpha u_0 - f^\alpha \right)(\vec x) \text{ for a.e. } \vec x \text{ in } \W;
    \end{equation}
  }
  \State{
    Solve the linear PDE
    \begin{equation}
      \label{eq:linear1-Howard's-algorithm} 
      \linop L^{q_0} u_{1} - f^{q_0}  = 0 ~ \text{ in } \W
      \tand
      \restriction{ u_{1}}{\boundary\W}=0;
    \end{equation}
  }
  \For{$n\geq 1$}
  \State{
    Update the control map $q_n$ such that %
    \begin{equation}
      \label{eq:optimize-Howard's-algorithm}
      q_{n}(\vec  x) \in \underset
      {\substack{\alpha \in \mathcal{A}
      }}
      \Argmax \left(  \linop L^\alpha u_n - f^\alpha \right)(\vec x) \text{ for a.e. } \vec x \text{ in } \W;
    \end{equation}
  }
  \State{
    Solve the linear PDE
    \begin{equation}
      \label{eq:linear-Howard's-algorithm} 
      \linop L^{q_n} u_{n+1} - f^{q_n}  = 0 ~ \text{ in } \W
      \tand
      \restriction{ u_{n+1}}{\boundary\W}=0;
    \end{equation}
  }
  \EndFor 
\end{algorithmic} 

To follow Howard's algorithm, we need to solve the optimization problem (\ref{eq:optimize-Howard's-algorithm}) and the linear problem in nondivergence form (\ref{eq:linear-Howard's-algorithm}) in each iteration. Lemma~\ref{lem:lim-alpha-j} implies that for any $u_n \in \sobh2(\W) \meet \sobhz1(\W)$, $\mathcal{N}[u_n]$ is nonempty, ensuring the existence of a solution to (\ref{eq:optimize-Howard's-algorithm}) in each iteration. 
For Howard's algorithm to proceed, we also require the existence of a solution to (\ref{eq:linear-Howard's-algorithm}) in each iteration. Since for each $n \in \naturals$, $\quad \mat A^{q_n}$, $\vec b^{q_n}$, $c^{q_n}$ satisfy the Cordes condition either (\ref{def:general-Cordes-condition}) or (\ref{def:special-Cordes-condition}), Theorem~\ref{The:well-posedness-strong-solution} ensures well-posedness of the strong solution $u_n \in \sobh 2(\W) \cap \sobhz 1(\W)$ to (\ref{eq:linear-Howard's-algorithm}) for fixed $\alpha = q_n(\cdot)$. Indeed, the strong solution $u$ represents a fixed-point iteration of the algorithm. 
\\
Remark~(\ref{rem:well-posedness- non-homogeneous}) and the Newton differentiability of the HJB operator on $\sobh2(\W)$, as pointed out in Remark~\ref{rem:relaxing-assumptions-of-Newton-differentiability}, imply that we can directly adapt Howard's algorithm to the nonhomogeneous HJB equation (\ref{eq:HJB0-inhomogeneous}). In this case, we need to replace the boundary condition $\restriction{u_1,u_{n+1}}{\boundary\W}=r$ in (\ref{eq:linear1-Howard's-algorithm}) and (\ref{eq:linear-Howard's-algorithm}).
\section{A least-squares Galerkin gradient recovery for linear
  nondivergence form elliptic PDEs}
\label{sec:Variational-linear}
We review here the
least-squares Galerkin gradient recovery method for linear
equations in nondivergence form as proposed by
\citet{LakkisMousavi:21:article:A-least-squares} and improve it for applying to solve (\ref{eq:linear1-Howard's-algorithm}) and (\ref{eq:linear-Howard's-algorithm}) in the context of Howard's algorithm.
Our improvement involves imposing a tangential trace constraint on the cost functional instead of the function space, which relies on a \indexemph{generalized Maxwell's inequality} presented in Lemma~\ref{lem:general-maxwell-inequality}, extending its analogue in \citet{LakkisMousavi:21:article:A-least-squares}.
This enables us to derive a \indexemph{Miranda--Talenti type estimate} in Theorem~\ref{the:modified-M-T}, which is crucial in proving Theorem~\ref{the:Coercivity-continuity}, that the problem is well-posed using a Lax--Milgram argument. This sets the foundation for a finite element discretization in \Cref{subsec:finite-element-discrete}, for which we derive \apriori error estimates in Theorem~\ref{the:convergence-rate-inhomogen} and \aposteriori estimates in Theorem~\ref{the:residual-error-bound}.
\subsection{Least-squares Galerkin for linear nondivergence form PDE}  
Consider the second order elliptic linear equation in nondivergence form
\begin{equation}
  \label{eq:non-divergence-homogeneous}
  \linop Lu
  :=
  \mat A\frobinner\D^2 u
  +
  \vec b\inner \nabla u
  -
  c u
  =
  f
  ~ \text{ in } \W
  \tand
  \restriction u{\boundary\W}=r
\end{equation}
where $\mat A, \vec b, c \in \leb{\infty}(\W; X)$ for $X= \Symmatrices d$, $\R d$, $\R {\geq0}$ respectively.
These elements satisfy the uniform ellipticity condition (\ref{def:uniformly-elliptic}) and the Cordes condition either (\ref{def:general-Cordes-condition}) with $\lambda>0$ or (\ref{def:special-Cordes-condition}) with $\lambda=0$ when $\vec b=0$, $c=0$, and $f \in \leb2(\W)$, $r \in \sobh{3/2}(\boundary\W)$.

Well-posedness of the strong solution to (\ref{eq:non-divergence-homogeneous}) is achieved via Theorem~\ref{The:well-posedness-strong-solution}, Remark~\ref{rem:well-posedness- non-homogeneous} and Remark~\ref{rem:relaxing-continuity-uniformly-bounded}.
But dealing with such sizeable regular function space ($\sobh2(\W)$) leads to complicated computations that are unpleasant. 
Because of this, we propose to consider an alternative equivalent problem in which its solution resides in a weaker space. To facilitate this, we introduce some notations.

We denote the outer normal to $\W$ by $\normalto\W(\vec x)$ for almost all $\vec x$ on $\boundary\W$. The tangential trace of $\vec \psi \in \sobh1(\W; \R d)$ is defined by
\begin{equation}
  \mathchanges{\tangentialto\W} \vec\psi :=
  \qp{\eye -\normalto\W \normalto\W \inner}\restriction{\vec\psi}{\boundary\W}. 
\end{equation}
We inset the function space
\index{$\linspace w$}
\begin{equation}
  \linspace W:=
  \left\lbrace 
  \vec{\psi}\in\sobh1(\W;\R d)
  \left\vert ~
  \tangentialto\W \vec\psi = 0
  \right\rbrace \right.
  ,
\end{equation}
equipped with the $\sobh1$-norm and consider the following norm for $\sobh1(\W) \times \sobh1\qp{\W;\R d}$,
\begin{equation}
  \Norm{(\varphi , \vec\psi) }_{\sobh1(\W)}^2
  :=
  \Norm{ \varphi  }_{\sobh1(\W)}^2
  +
  \Norm{ \vec\psi  }_{\sobh1(\W)}^2
  \Foreach
  (\varphi ,\vec\psi)
  \in
  \sobh1(\W) \times \sobh1\qp{\W;\R d}
  .
\end{equation}
We denote $\leb2(\W)$/ $\leb2(\boundary\W)$ inner product of two scalar/vector/tensor-valued functions by
\begin{equation}
  \ltwop{\varphi}{\psi}:=
  \int_\W\varphi(\vec x)\star\psi(\vec x)\d\vec x,
  \qquad
  \ltwop{\varphi}{\psi}_{\boundary\W}:=
  \int_{\boundary\W}\varphi(\vec x)\star\psi(\vec x)\ds({\vec x})
\end{equation}
where $\star$ stands for one of the arithmetic, Euclidean-scalar, or Frobenius inner product in $\reals$, $\R d$, or $\realmats dd$ respectively. The notations $\mini{a}b$, $\maxi{a}b$ respectively indicate the minimum and maximum of two values $a, b$.

By considering
$0\leq\theta\leq1$, we define the linear operator
\begin{equation}
  \label{op:on-cross-space}
  \begin{gathered}
    \linop{M}_\theta : \sobh1(\W) \times \sobh1\qp{\W;\R d} \rightarrow \leb2(\W)
    \\
    (\varphi, \vec \psi)
    \mapsto
    \mat A \frobinner \D \vec \psi 
    + 
    \vec b
    \inner
    (
    \theta\vec \psi +(1-\theta) \nabla \varphi
    )
    -
    c \varphi
    =:\linop M_\theta(\varphi,\vec\psi).
  \end{gathered}
\end{equation}
Initially, we assume that the boundary condition is zero, i.e., $\restriction u{\boundary\W}=0$. 
In this regard, we introduce the following quadratic functional on $\sobhz1(\W) \times \sobh1(\W; \R d)$
\begin{equation} 
  \label{functional:minimize}
  E_\theta(\varphi,\vec\psi)
  :=
  \Norm{ \nabla\varphi-\vec\psi}_{\leb2(\W)} ^2 
  +
  \Norm{ \rot\vec\psi }_{\leb2(\W)}^2
  +
  \Norm{\tangentialto\W  \vec \psi }_{\leb2(\boundary\W)}^2
  +
  \Norm{ \linop{M}_\theta(\varphi, \vec\psi)  -f } _{\leb2(\W)}^2
\end{equation} 
where $ \rot\vec\psi$ indicates curl of $\vec \psi$. We then deal with the convex minimization problem of finding a unique pair of the form
\begin{equation}
  \label{eq:minimization}
  (u, \vec g)
  = 
  \underset
      {\substack{
          (\varphi, \vec\psi)\in \sobhz1(\W) \times \sobh1(\W; \R d)
      }}
      \argmin
      E_\theta(\varphi ,\vec\psi )
      .
\end{equation}
The key point is that the problem of finding the strong solution $u$ to (\ref{eq:non-divergence-homogeneous}) with $\restriction u{\boundary\W}=0$ and solving the minimization problem (\ref{eq:minimization}) are equivalent and $\vec g = \grad u$ holds as a consequence of (\ref{eq:minimization}). In the rest of the paper, $\vec g$ will be synonymous with $\grad u$.
The Euler--Lagrange equation of the minimization problem (\ref{eq:minimization}) includes in finding $(u, \vec g) \in \sobhz1(\W) \times \sobh1(\W; \R d)$ such that
\begin{multline}
  \label{eq:E-L-eq-2}
  \qa{ 
    \nabla u-\vec g
    ,
    \nabla \varphi - \vec \psi
  } 
  +
  \qa{
    \rot \vec g  , \rot \vec \psi 
  }
  +
  \qa{
    \tangentialto\W  \vec g 
    ,
    \tangentialto\W \vec \psi
  }_{\boundary\W}
  \\
  +
  \qa{
    \linop{M}_\theta(u , \vec g)
    , 
    \linop{M}_\theta(\varphi , \vec \psi)
  }
  =
  \qa{
    f , \linop{M}_\theta(\varphi , \vec \psi)
  }
  \Foreach (\varphi, \vec\psi)\in \sobhz1(\W) \times \sobh1(\W; \R d).
\end{multline}
Consistent with (\ref{eq:E-L-eq-2}), %
we define the symmetric 
bilinear form
\begin{equation*}
  \funk
      {a_\theta}{
        \ppow{
          \left(  
          \sobhz1(\W) \times \sobh1(\W; \R d) 
          \right) 
        }2
      }
      \reals
\end{equation*}
by
\begin{multline}
  a_\theta
  (\varphi ,\vec\psi
  ;
  \varphi' , \vec\psi')
  :=
  \ltwop{
    \grad \varphi-\vec\psi
  }{
    \grad \varphi'-\vec\psi'
  }
  +
  \ltwop{
    \rot\vec\psi
  }{
    \rot\vec\psi'
  }
  \\
  +
  \ltwop{ 
    \tangentialto\W \vec \psi
  }{ 
    \tangentialto\W \vec \psi'
  }_{\boundary\W}
  +
  \ltwop{
    \linop{M}_\theta(\varphi ,\vec\psi )
  }{
    \linop{M}_\theta(\varphi',\vec\psi')
  } .
\end{multline}
To establish the well-posedness of (\ref{eq:E-L-eq-2}) using the Lax--Milgram theorem, it suffices to demonstrate the coercivity and continuity of the bilinear form $a_\theta$. We address this matter through the following results.
\begin{Lem}[Maxwell's inequality]
  \label{lem:maxwell-inequality}
  For a convex $\W$, every function $\vec\psi\in\sobh1(\W;\R d)$ satisfies
  \begin{multline}
    \label{ineq:maxwell-general}
    \Norm{\D \vec \psi}_{\leb2(\W)}^2
    \leq
    \Norm{\div \vec \psi}_{\leb2(\W)}^2
    +
    \Norm{\rot\vec\psi}_{\leb2(\W)}^2
    \\
    +2\ltwop{
      \tangentialto\W \vec \psi
    }{
      \D\qb{\vec \psi\inner\normalto\W}-\partial_{\normalto\W}(\vec \psi\inner\normalto\W) \normalto\W
    }_{\boundary\W}.
  \end{multline}
\end{Lem}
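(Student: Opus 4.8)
The plan is to establish (\ref{ineq:maxwell-general}) first for $\vec\psi\in\cont\infty(\closure\W;\R d)$ on a smooth bounded convex domain, and then recover the general case by a double approximation: $\cont\infty(\closure\W;\R d)$ is dense in $\sobh1(\W;\R d)$, and any bounded convex domain is a limit of smooth bounded convex domains together with the corresponding convergence of the boundary integrals. Every term in (\ref{ineq:maxwell-general}) is stable under these limits once the last one is read as the duality pairing between the tangential trace $\tangentialto\W\vec\psi\in\sobh{1/2}(\boundary\W;\R d)$ and the surface gradient $\D\qb{\vec\psi\inner\normalto\W}-\partial_{\normalto\W}(\vec\psi\inner\normalto\W)\normalto\W$ of $\vec\psi\inner\normalto\W$, which lies in $\sobh{-1/2}(\boundary\W;\R d)$.

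For smooth $\vec\psi$ I would begin from the pointwise algebraic identity $\norm{\D\vec\psi}^2=\norm{\rot\vec\psi}^2+\sum_{i,j}\partial_i\psi_j\,\partial_j\psi_i$ in $\W$, valid for both $d=2$ and $d=3$ and obtained by expanding the squares componentwise. Integrating the last sum over $\W$ and integrating by parts twice — first transferring $\partial_i$ onto $\psi_j$, then transferring the emerging $\partial_j$ onto $\div\vec\psi$ — the interior contribution collapses to $\Norm{\div\vec\psi}_{\leb2(\W)}^2$, leaving one boundary integral over $\boundary\W$ with integrand $\sum_{i,j}(\normalto\W)_i\,\psi_j\,\partial_j\psi_i-(\vec\psi\inner\normalto\W)\,\div\vec\psi$. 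Extending $\normalsymbol:=\normalto\W$ near $\boundary\W$ as the gradient of the signed distance to $\boundary\W$ (so that $\partial_{\normalsymbol}\normalsymbol=0$, $\D\normalsymbol$ is symmetric, $(\D\normalsymbol)\normalsymbol=0$, and $\D\normalsymbol$ restricts on the tangent space of $\boundary\W$ to the second fundamental form $\mathcal B$), and splitting $\vec\psi=(\vec\psi\inner\normalsymbol)\normalsymbol+\tangentialto\W\vec\psi$ on $\boundary\W$ as in the definition of the tangential trace, this integrand rearranges into $\tangentialto\W\vec\psi\inner\nabla_t(\vec\psi\inner\normalsymbol)-\mathcal B(\tangentialto\W\vec\psi,\tangentialto\W\vec\psi)-(\vec\psi\inner\normalsymbol)\operatorname{div}_{\boundary\W}(\tangentialto\W\vec\psi)-(\trace\mathcal B)(\vec\psi\inner\normalsymbol)^2$, where $\nabla_t$ and $\operatorname{div}_{\boundary\W}$ are the tangential gradient and the surface divergence.

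Next I would apply the surface divergence theorem on the closed manifold $\boundary\W$ (no lower-dimensional boundary terms appear) to turn $-\int_{\boundary\W}(\vec\psi\inner\normalsymbol)\operatorname{div}_{\boundary\W}(\tangentialto\W\vec\psi)\ds$ into $+\int_{\boundary\W}\tangentialto\W\vec\psi\inner\nabla_t(\vec\psi\inner\normalsymbol)\ds$, so that the boundary integral equals $2\int_{\boundary\W}\tangentialto\W\vec\psi\inner\nabla_t(\vec\psi\inner\normalsymbol)\ds-\int_{\boundary\W}\qb{\mathcal B(\tangentialto\W\vec\psi,\tangentialto\W\vec\psi)+(\trace\mathcal B)(\vec\psi\inner\normalsymbol)^2}\ds$. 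Convexity of $\W$ enters exactly here: $\mathcal B\geq0$, hence $\trace\mathcal B\geq0$, so the last integral is nonnegative and may be discarded, producing the inequality $\leq$. Collecting the pieces and using $\nabla_t(\vec\psi\inner\normalsymbol)=\D\qb{\vec\psi\inner\normalto\W}-\partial_{\normalto\W}(\vec\psi\inner\normalto\W)\normalto\W$ gives exactly (\ref{ineq:maxwell-general}).

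The algebraic identity and the two interior integrations by parts are routine; the main obstacle I anticipate is the boundary computation — fixing the sign convention for $\mathcal B$, justifying the surface integration by parts, and making the appearance of the (measure-valued, nonnegative) second fundamental form rigorous for a merely convex, hence only Lipschitz, domain — which is precisely what the approximation by smooth convex domains in the first step is meant to handle.
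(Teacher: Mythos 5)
The paper gives no proof of this lemma at all --- it is quoted from \citet[Lem.~2.2 \& Rem.~2.4]{Costabel} --- so your proposal has to stand as a self-contained argument. Its core is correct and is in fact the standard derivation behind that citation (a Grisvard-type integration-by-parts identity): the pointwise identity $\norm{\D\vec\psi}^2=\norm{\rot\vec\psi}^2+\sum_{i,j}\partial_i\psi_j\,\partial_j\psi_i$, the two integrations by parts leaving the boundary integrand $\sum_{i,j}(\normalto\W)_i\psi_j\partial_j\psi_i-(\vec\psi\inner\normalto\W)\div\vec\psi$, the splitting of $\vec\psi$ along the signed-distance extension of $\normalto\W$, the surface integration by parts that doubles the cross term, and the use of convexity only to discard $-\int_{\boundary\W}\qb{\mathcal{B}(\tangentialto\W\vec\psi,\tangentialto\W\vec\psi)+(\trace\mathcal{B})(\vec\psi\inner\normalto\W)^2}$ are all consistent in sign and constant (they check out, for instance, on the unit disc with $\vec\psi(\vec x)=\vec x$ and with the rotation field). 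For a \emph{smooth} convex domain this computation, together with density of smooth fields in $\sobh1(\W;\R d)$ and reading the boundary term as the $\sobh{1/2}$--$\sobh{-1/2}$ pairing, does prove the lemma.

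The genuine gap is the step you only assert: the passage from smooth convex domains to an arbitrary bounded convex, hence merely Lipschitz, $\W$. There the outer normal is defined only almost everywhere and is merely $\leb\infty$/BV along $\boundary\W$, so for a general $\vec\psi\in\sobh1(\W;\R d)$ neither the $\leb2(\boundary\W)$ product in the statement nor your duality reading is obviously meaningful: multiplication by $\normalto\W$ does not preserve $\sobh{1/2}(\boundary\W)$ on a Lipschitz boundary, so $\vec\psi\inner\normalto\W$ is in general only in $\leb2(\boundary\W)$ and its tangential gradient need not lie in $\sobh{-1/2}(\boundary\W)$, while $\tangentialto\W\vec\psi$ is likewise only $\leb2$. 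Hence ``every term is stable under these limits'' is exactly what must be proved, and it is not routine: along an exhaustion by smooth convex domains the normals and surface measures converge only weakly or a.e., the cross term involves tangential derivatives of $\vec\psi\inner\normalto{\W}$ computed on the approximating boundaries, and the curvature terms degenerate into measures. This is precisely why \citet{Costabel} work on polyhedra, where $\normalto\W$ is piecewise constant, the term is defined face by face, and convexity controls the edge and corner contributions. To close your argument you would need either to restrict to smooth convex domains or convex polytopes (handled as in the cited reference), or to add hypotheses under which the boundary pairing is well defined (e.g.\ $\vec\psi\in\sobh2(\W;\R d)$, or $\tangentialto\W\vec\psi=0$, in which case the problematic term vanishes and the approximation argument becomes the classical one); as written, the final limit passage is a missing piece rather than a verification.
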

See \citet[Lem. 2.2 \& Rem. 2.4]{Costabel} for more details. 
The bound (\ref{ineq:maxwell-general}) for $\vec \psi \in \linspace W$ known as
Maxwell's inequality, which is 
\begin{equation}
  \label{ineq:maxwell}
  \Norm{\D \vec \psi}_{\leb2(\W)}^2
  \leq
  \Norm{\div \vec \psi}_{\leb2(\W)}^2
  +
  \Norm{\rot\vec\psi}_{\leb2(\W)}^2.
\end{equation}
\begin{Lem}[generalized Maxwell's inequality]
  \label{lem:general-maxwell-inequality}
  Let $\W$ be a convex domain. There exists $C_{\ref{ineq:general-maxwell}}>0$ 
  such that for any $\vec \psi \in \sobh1(\W; \R d)$,
  \begin{equation}
    \label{ineq:general-maxwell}
    \Norm{\D \vec \psi}_{\leb2(\W)}^2
    \leq
    C_{\ref{ineq:general-maxwell}}
    \qp{
      \Norm{\div \vec \psi}_{\leb2(\W)}^2
      +
      \Norm{\rot\vec\psi}_{\leb2(\W)}^2
      +
      \Norm{\tangentialto\W \vec \psi}_{\leb2(\boundary \W)}^2
    }.
  \end{equation}
\end{Lem}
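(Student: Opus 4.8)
The plan is to derive \eqref{ineq:general-maxwell} from the general Maxwell identity \eqref{ineq:maxwell-general} of Lemma~\ref{lem:maxwell-inequality}, thereby extending the zero-tangential-trace case \eqref{ineq:maxwell} (the version available in \cite{LakkisMousavi:21:article:A-least-squares}) to arbitrary $\vec\psi\in\sobh1(\W;\R d)$. Writing $w:=\restriction{\vec\psi}{\boundary\W}\inner\normalto\W$ for the normal trace, the vector $\D\qb{\vec\psi\inner\normalto\W}-\partial_{\normalto\W}(\vec\psi\inner\normalto\W)\normalto\W$ in \eqref{ineq:maxwell-general} is exactly the tangential part of its gradient, i.e.\ the surface gradient $\nabla_{\boundary\W}w$, which is intrinsic to $\boundary\W$ and independent of the extension of $\normalto\W$. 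Hence \eqref{ineq:maxwell-general} becomes
\[
  \Norm{\D\vec\psi}_{\leb2(\W)}^2
  \leq
  \Norm{\div\vec\psi}_{\leb2(\W)}^2
  + \Norm{\rot\vec\psi}_{\leb2(\W)}^2
  + 2\ltwop{\tangentialto\W\vec\psi}{\nabla_{\boundary\W}w}_{\boundary\W},
\]
and it is enough to show that for every $\delta>0$ there exists $C_\delta>0$ with
\[
  2\ltwop{\tangentialto\W\vec\psi}{\nabla_{\boundary\W}w}_{\boundary\W}
  \leq
  \delta\Norm{\D\vec\psi}_{\leb2(\W)}^2
  + C_\delta\qp{
    \Norm{\div\vec\psi}_{\leb2(\W)}^2
    + \Norm{\rot\vec\psi}_{\leb2(\W)}^2
    + \Norm{\tangentialto\W\vec\psi}_{\leb2(\boundary\W)}^2
  }
\]
for all $\vec\psi\in\sobh1(\W;\R d)$; choosing $\delta$ small enough and absorbing the first right-hand term into the left then yields \eqref{ineq:general-maxwell}.

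To estimate the boundary term I would use that $\boundary\W$ is a closed hypersurface and $\tangentialto\W\vec\psi$ a tangential field, integrating by parts on $\boundary\W$ to obtain $\ltwop{\tangentialto\W\vec\psi}{\nabla_{\boundary\W}w}_{\boundary\W}=-\ltwop{w}{\operatorname{div}_{\boundary\W}(\tangentialto\W\vec\psi)}_{\boundary\W}$, and then the standard tangential-divergence identity, which writes $\operatorname{div}_{\boundary\W}(\tangentialto\W\vec\psi)$ as $\restriction{\div\vec\psi}{\boundary\W}$ minus the normal--normal component $\normalto\W\inner(\D\vec\psi)\normalto\W$ minus a mean-curvature term $H\,w$, where $H=\operatorname{div}_{\boundary\W}\normalto\W\geq0$ because $\W$ is convex --- the same convexity hypothesis that underpins Lemma~\ref{lem:maxwell-inequality}. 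Substituting recasts $2\ltwop{\tangentialto\W\vec\psi}{\nabla_{\boundary\W}w}_{\boundary\W}$ as boundary integrals of $w\,\restriction{\div\vec\psi}{\boundary\W}$, of $w\,(\normalto\W\inner\D\vec\psi\,\normalto\W)$ and of $H\,w^2$.

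Finally I would bound these pieces with a multiplicative trace inequality $\Norm{v}_{\leb2(\boundary\W)}^2\leq\delta\Norm{\D v}_{\leb2(\W)}^2+C_\delta\Norm{v}_{\leb2(\W)}^2$, Poincaré/Friedrichs control of the residual $\leb2(\W)$-norms, and Young's inequality: each factor carrying a first derivative of $\vec\psi$ traced to $\boundary\W$ is split into a small multiple $\delta\Norm{\D\vec\psi}_{\leb2(\W)}^2$, destined for absorption, plus a term controlled by $C_\delta\qp{\Norm{\div\vec\psi}_{\leb2(\W)}^2+\Norm{\rot\vec\psi}_{\leb2(\W)}^2}$, while the curvature integral is bounded directly by $\qp{\sup_{\boundary\W}H}\Norm{w}_{\leb2(\boundary\W)}^2$, and, since only the pairing of $w$ with a tangential field survives, by a constant times $\Norm{\tangentialto\W\vec\psi}_{\leb2(\boundary\W)}^2$ modulo lower-order contributions. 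Collecting the estimates and relabelling the constant gives \eqref{ineq:general-maxwell}. The main obstacle is exactly this boundary estimate: $\nabla_{\boundary\W}w$ genuinely encodes a full first derivative of $\vec\psi$ on $\boundary\W$, so the whole integration-by-parts-and-absorption scheme is engineered to trade that derivative for an absorbable $\delta\Norm{\D\vec\psi}_{\leb2(\W)}^2$, and it is here --- rather than in the identity \eqref{ineq:maxwell-general} --- that both the $\sobh1(\W;\R d)$ regularity and the convexity of $\W$ come into play.
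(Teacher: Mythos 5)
You start from the same Costabel identity (Lemma~\ref{lem:maxwell-inequality}) as the paper, and your identification of the boundary vector $\D\qb{\vec\psi\inner\normalto\W}-\partial_{\normalto\W}(\vec\psi\inner\normalto\W)\normalto\W$ with the surface gradient of $w:=\vec\psi\inner\normalto\W$ is correct. The paper, however, finishes quite differently: it argues by contradiction, normalizes $\Norm{\D\vec\psi_n}_{\leb2(\W)}=1$, applies Cauchy--Schwarz to the boundary pairing in (\ref{ineq:maxwell-general}) and passes to the limit, while you attempt a direct $\delta$-absorption after a tangential integration by parts on $\boundary\W$.

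It is precisely this absorption step that does not work as described. After integrating by parts you are left with boundary integrals of $w\,\restriction{\div\vec\psi}{\boundary\W}$, of $w\,\qp{\normalto\W\inner\D\vec\psi\,\normalto\W}$ and of the curvature term $H\,w^2$; the first two involve traces of \emph{first derivatives} of $\vec\psi$ on $\boundary\W$. For $\vec\psi\in\sobh1(\W;\R d)$ these traces do not exist in $\leb2(\boundary\W)$ (one only has $\div\vec\psi\in\leb2(\W)$), and even arguing by density the multiplicative trace inequality you invoke, applied to $v=\div\vec\psi$ or $v=\normalto\W\inner\D\vec\psi\,\normalto\W$, yields $\delta\Norm{\D v}_{\leb2(\W)}^2+C_\delta\Norm{v}_{\leb2(\W)}^2$, whose leading term involves \emph{second} derivatives of $\vec\psi$: it is not a small multiple of $\Norm{\D\vec\psi}_{\leb2(\W)}^2$ and cannot be absorbed into the left-hand side. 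Hence the intermediate claim that the boundary pairing is bounded by $\delta\Norm{\D\vec\psi}_{\leb2(\W)}^2+C_\delta\qp{\Norm{\div\vec\psi}_{\leb2(\W)}^2+\Norm{\rot\vec\psi}_{\leb2(\W)}^2+\Norm{\tangentialto\W\vec\psi}_{\leb2(\boundary\W)}^2}$ is not established by your scheme, and without it the proposal does not prove (\ref{ineq:general-maxwell}). Your own closing remark that $\nabla_{\boundary\W}w$ encodes a full first derivative of $\vec\psi$ on the boundary is exactly the difficulty: it persists (merely relocated onto $\div\vec\psi$ and the normal--normal component) after the integration by parts, so trading it for an absorbable interior term would require either more regularity than $\sobh1(\W;\R d)$ or a different mechanism, such as the limiting/contradiction argument the paper uses, in which the problematic boundary factor is isolated by Cauchy--Schwarz rather than estimated termwise.
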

\begin{Proof}
  We argue by contradiction. Suppose inequality (\ref{ineq:general-maxwell}) fails, 
  then for any $n \in \naturals$ there is $\vec \psi_n \in \sobh1(\W; \R d)$
  such that
  \begin{equation}
    \label{ineq-contradiction-general-maxwell}
    \Norm{\div \vec \psi_n}_{\leb2(\W)}^2
    +
    \Norm{\rot\vec\psi_n}_{\leb2(\W)}^2
    +
    \Norm{\tangentialto\W \vec \psi_n}_{\leb2(\boundary \W)}^2
    <
    \dfrac{1}{n}
    \Norm{\D \vec \psi_n}_{\leb2(\W)}^2.
  \end{equation}
  Without loss of generality, assume $\Norm{\D \vec \psi_n}_{\leb2(\W)}= 1$
  for all $n \in \naturals$. 
  Then (\ref{ineq-contradiction-general-maxwell}) implies that 
  $\Norm{\div \vec \psi_n}_{\leb2(\W)}$, 
  $\Norm{\rot\vec\psi_n}_{\leb2(\W)}$ and 
  $\Norm{\tangentialto\W \vec \psi_n}_{\leb2(\boundary \W)}$
  all converge to zero as $\nty$.
  On the other hand, from (\ref{ineq:maxwell-general}) and by applying
  Cauchy-Schwarz inequality, for all $n \in \naturals$ we have
  \begin{multline}
    \Norm{\D \vec \psi_n}_{\leb2(\W)}^2
    \leq
    \Norm{\div \vec \psi_n}_{\leb2(\W)}^2
    +
    \Norm{\rot\vec\psi_n}_{\leb2(\W)}^2
    \\
    +2\ltwop{
      \tangentialto\W \vec \psi_n
    }{
      \D (\vec \psi_n \inner \normalto\W) - \partial_{\normalto\W}(\vec \psi_n \inner \normalto\W) \normalto\W
    }_{\boundary\W}
    \leq
    \Norm{\div \vec \psi_n}_{\leb2(\W)}^2
    \\
    +
    \Norm{\rot\vec\psi_n}_{\leb2(\W)}^2
    +
    2
    \Norm{\tangentialto\W \vec \psi_n}_{\leb2(\boundary\W)}
    \Norm{\D\qb{\vec \psi_n \inner \normalto\W}
      -
      \partial_{\normalto\W}(\vec \psi_n \inner \normalto\W) \normalto\W}_{\leb2(\boundary\W)}
    .
  \end{multline}
  By taking the limit of the both sides of the above inequality, when $n\rightarrow \infty$, we get
  \begin{multline}
    1= \lim_{n \rightarrow \infty} \Norm{\D \vec \psi_n}_{\leb2(\W)}^2
    \leq
    \lim_{n \rightarrow \infty} \Big(
    \Norm{\div \vec \psi_n}_{\leb2(\W)}^2
    +
    \Norm{\rot\vec\psi_n}_{\leb2(\W)}^2
    \\
    +2
    \Norm{\tangentialto\W \vec \psi_n}_{\leb2(\boundary\W)}
    \Norm{\D (\vec \psi_n \inner \normalto\W)
      -
      \partial_{\normalto\W}\qb{\vec \psi_n \inner \normalto\W} \normalto\W}_{\leb2(\boundary\W)}
    \Big)
    =0,
  \end{multline}
  which it is a contradiction.
\end{Proof}
\begin{Obs}[extending \cite{LakkisMousavi:21:article:A-least-squares}]
  \label{rem:modified-theorems}
  Using the generalized Maxwell inequality (\ref{ineq:general-maxwell}) instead of
  its special case (\ref{ineq:maxwell}) in the proof arguments of Lemma 3.2,
  Theorem 3.6 and Theorem 3.7 of \cite{LakkisMousavi:21:article:A-least-squares} yields the following lemma and theorems.
\end{Obs}
\begin{Lem}[a Miranda-Talenti estimate]
  \label{lem-M-T-estimate}
  Let $\mat A$ satisfies the Cordes condition with $\lambda = 0$ (\ref{def:special-Cordes-condition}),
  then there exists $C_{\ref{ineq:M-T-estimate}}>0$ such that for any $\vec \psi \in \sobh1(\W; \R d)$,
  \begin{equation}
    \label{ineq:M-T-estimate}
    \Norm{\rot\vec\psi}_{\leb2(\W)}^2 
    +
    \Norm{\tangentialto\W \vec \psi}_{\leb2(\boundary\W)}^2
    +
    \Norm{\mat A \frobinner \D \vec \psi}_{\leb2(\W)}^2
    \geq
    C_{\ref{ineq:M-T-estimate}}
    \Norm{\D \vec \psi}_{\leb2(\W)}^2.
  \end{equation}
\end{Lem}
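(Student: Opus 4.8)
The plan is to reduce the claimed estimate to the generalized Maxwell inequality of Lemma~\ref{lem:general-maxwell-inequality} by using the Cordes structure of $\mat A$ to bound $\Norm{\div\vec\psi}_{\leb2(\W)}$ in terms of $\Norm{\mat A\frobinner\D\vec\psi}_{\leb2(\W)}$ and a small multiple of $\Norm{\D\vec\psi}_{\leb2(\W)}$; this is exactly the route of \citet[Lem.~3.2]{LakkisMousavi:21:article:A-least-squares}, with the special Maxwell inequality~(\ref{ineq:maxwell}) there replaced by its generalized form~(\ref{ineq:general-maxwell}), as announced in Remark~\ref{rem:modified-theorems}. First I would introduce the Cordes scaling factor $\gamma:=\trace\mat A/\norm{\mat A}^2$, which is measurable and, thanks to the uniform ellipticity~(\ref{def:uniformly-elliptic}), satisfies $0<\constref[\hA,\flat]{def:uniformly-elliptic}/\constref[\hA,\sharp]{def:uniformly-elliptic}^2\leq\gamma\leq\constref[\hA,\sharp]{def:uniformly-elliptic}/\constref[\hA,\flat]{def:uniformly-elliptic}^2$ almost everywhere in $\W$, so in particular $\gamma\in\leb\infty(\W)$. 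From the identity $\norm{\gamma\mat A-\eye}^2=d-(\trace\mat A)^2/\norm{\mat A}^2$, the special Cordes condition~(\ref{def:special-Cordes-condition}) is equivalent to the pointwise bound $\norm{\gamma\mat A-\eye}\leq\sqrt{1-\varepsilon}$ almost everywhere in $\W$ (cf.\ \citet[\S2.2]{LakkisMousavi:21:article:A-least-squares}).

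Next, writing $\eye\frobinner\D\vec\psi=\trace(\D\vec\psi)=\div\vec\psi$, the pointwise Cordes bound yields, almost everywhere in $\W$,
\[
  \abs{\gamma\,\mat A\frobinner\D\vec\psi-\div\vec\psi}
  =\abs{(\gamma\mat A-\eye)\frobinner\D\vec\psi}
  \leq\norm{\gamma\mat A-\eye}\,\norm{\D\vec\psi}
  \leq\sqrt{1-\varepsilon}\,\norm{\D\vec\psi},
\]
and, taking $\leb2(\W)$-norms, using the triangle inequality together with $\gamma\in\leb\infty(\W)$,
\[
  \Norm{\div\vec\psi}_{\leb2(\W)}
  \leq
  \Norm{\gamma}_{\leb\infty(\W)}\Norm{\mat A\frobinner\D\vec\psi}_{\leb2(\W)}
  +\sqrt{1-\varepsilon}\,\Norm{\D\vec\psi}_{\leb2(\W)}.
\]
I would then square this, insert it into the generalized Maxwell inequality~(\ref{ineq:general-maxwell}), and split the resulting cross term by Young's inequality with a parameter $\eta>0$. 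Collecting the $\Norm{\D\vec\psi}_{\leb2(\W)}^2$ contributions and choosing $\eta$ small enough that—because $\varepsilon>0$—their combined coefficient stays strictly below $1$, one absorbs them into the left-hand side to obtain
\[
  \Norm{\D\vec\psi}_{\leb2(\W)}^2
  \leq
  C_{\ref{ineq:M-T-estimate}}\qp{
    \Norm{\rot\vec\psi}_{\leb2(\W)}^2
    +\Norm{\tangentialto\W\vec\psi}_{\leb2(\boundary\W)}^2
    +\Norm{\mat A\frobinner\D\vec\psi}_{\leb2(\W)}^2
  },
\]
with $C_{\ref{ineq:M-T-estimate}}$ depending only on $d$, $\varepsilon$, $\Norm{\gamma}_{\leb\infty(\W)}$ (hence on $\constref[\hA,\flat]{def:uniformly-elliptic}$, $\constref[\hA,\sharp]{def:uniformly-elliptic}$) and $C_{\ref{ineq:general-maxwell}}$; this is~(\ref{ineq:M-T-estimate}).

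The main obstacle is making the absorption step honest: it succeeds only if the amplification coming from the Maxwell-type bound does not eat up the Cordes gap $\varepsilon$, i.e.\ the coefficient multiplying $\Norm{\D\vec\psi}_{\leb2(\W)}^2$ after substitution must be $(1-\varepsilon)$ plus a term of order $\eta$, hence $<1$. For this one should use Costabel's identity~(\ref{ineq:maxwell-general}), in which the $\Norm{\div\vec\psi}_{\leb2(\W)}^2$ and $\Norm{\rot\vec\psi}_{\leb2(\W)}^2$ terms carry coefficient exactly one on the convex domain $\W$—the genuine constant entering only against the boundary term—rather than the qualitative (contradiction-derived) form of~(\ref{ineq:general-maxwell}); the remaining estimates are the bookkeeping already done in \citet[Lem.~3.2]{LakkisMousavi:21:article:A-least-squares}. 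A self-contained alternative would mimic the proof of Lemma~\ref{lem:general-maxwell-inequality}: assuming~(\ref{ineq:M-T-estimate}) fails, normalise $\Norm{\D\vec\psi_n}_{\leb2(\W)}=1$, pass (via a Poincaré bound and Rellich) to a weak $\sobh1(\W;\R d)$-limit $\vec\psi$ with $\rot\vec\psi=0$, $\tangentialto\W\vec\psi=0$ and $\mat A\frobinner\D\vec\psi=0$, write $\vec\psi=\grad w$ with $w\in\sobh2(\W)\meet\sobhz1(\W)$ solving $\mat A\frobinner\D^2 w=0$, invoke the uniqueness in Theorem~\ref{The:well-posedness-strong-solution} to get $w=0$, and contradict the normalisation through strong $\leb2$-convergence of $\D\vec\psi_n$ obtained once more from~(\ref{ineq:general-maxwell}) and the $\div$-bound above.
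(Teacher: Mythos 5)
Your main route---introducing the Cordes scaling $\gamma=\trace\mat A/\norm{\mat A}^2$, the pointwise bound $\norm{\gamma\mat A-\eye}\leq\sqrt{1-\varepsilon}$, hence $\Norm{\div\vec\psi}_{\leb2(\W)}\leq\Norm{\gamma}_{\leb\infty(\W)}\Norm{\mat A\frobinner\D\vec\psi}_{\leb2(\W)}+\sqrt{1-\varepsilon}\,\Norm{\D\vec\psi}_{\leb2(\W)}$, then a Maxwell-type inequality and absorption---is exactly what the paper intends: its ``proof'' is Remark~\ref{rem:modified-theorems}, which says to rerun Lemma~3.2 of \cite{LakkisMousavi:21:article:A-least-squares} with (\ref{ineq:general-maxwell}) in place of (\ref{ineq:maxwell}). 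You are also right, and more careful than the paper, in observing that this substitution does not close as stated: after Young's inequality the coefficient of $\Norm{\D\vec\psi}_{\leb2(\W)}^2$ is $C_{\ref{ineq:general-maxwell}}(1+\eta)(1-\varepsilon)$, and nothing in the contradiction proof of Lemma~\ref{lem:general-maxwell-inequality} guarantees $C_{\ref{ineq:general-maxwell}}<1/(1-\varepsilon)$.

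However, neither of your two repairs is watertight, so a genuine gap remains. First, if you work from Costabel's identity (\ref{ineq:maxwell-general}) to keep unit coefficients on the interior terms, the boundary contribution is $2\ltwop{\tangentialto\W\vec\psi}{\D\qb{\vec\psi\inner\normalto\W}-\partial_{\normalto\W}(\vec\psi\inner\normalto\W)\normalto\W}_{\boundary\W}$; its second factor consists of tangential derivatives of the normal trace $\vec\psi\inner\normalto\W\in\sobh{1/2}(\boundary\W)$, which for a general $\vec\psi\in\sobh1(\W;\R d)$ lies only in $\sobh{-1/2}(\boundary\W)$ and is not bounded by $\Norm{\D\vec\psi}_{\leb2(\W)}$; hence it cannot be split as $\eta\cdot(\text{absorbable})+C_\eta\Norm{\tangentialto\W\vec\psi}_{\leb2(\boundary\W)}^2$, and the ``bookkeeping already done'' in the cited Lemma~3.2 does not cover it, since there the tangential trace vanishes and the term disappears. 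Second, in the compactness alternative the identification of the weak limit ($\vec\psi=\grad w$, $w\in\sobh2(\W)\meet\sobhz1(\W)$, $w=0$ by uniqueness) is fine, but the concluding contradiction needs strong $\leb2$-convergence of $\D\vec\psi_n$, and this does not follow from (\ref{ineq:general-maxwell}) together with the Cordes bound: that bound only gives $\Norm{\div\vec\psi_n}_{\leb2(\W)}\leq o(1)+\sqrt{1-\varepsilon}$, so $\div\vec\psi_n$ neither tends to zero nor is Cauchy, and passing to the limit in (\ref{ineq:general-maxwell}) yields only $1\leq C_{\ref{ineq:general-maxwell}}(1-\varepsilon)$, which is a contradiction precisely when $C_{\ref{ineq:general-maxwell}}<1/(1-\varepsilon)$---the very constant obstruction you set out to avoid. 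To finish honestly you would need either a sharpened Maxwell inequality with coefficients arbitrarily close to one on the $\div$ and $\rot$ terms (constant only on the boundary term), or a compactness argument that actually produces strong convergence of $\D\vec\psi_n$, e.g.\ through a Helmholtz-type decomposition separating the gradient part; neither is supplied in the proposal (nor, admittedly, in the paper's one-line reduction).
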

\begin{The}[a modified Miranda-Talenti-estimate]
  \label{the:modified-M-T}
  Let $\W$ be a bounded, open convex subset of $\R d$, 
  $0<\rho<\dfrac{1}{\maxi{C_{\ref{ineq:M-T-estimate}}}1}$ and 
  $0\leq\theta\leq1$. Then for any $(\varphi,\vec \psi) \in \sobhz1(\W) \times \sobh1(\W; \R d)$, 
  we have
  \begin{multline}
    \label{ineq:modified-M-T}
    \qp{\dfrac{1}{\maxi{C_{\ref{ineq:M-T-estimate}}}1} - \fracl{\rho}{2}}
    \norm{(\varphi, \vec \psi)}_{\lambda, \theta}^2
    \leq
    \Norm{\rot\vec\psi}_{\leb2(\W)}^2 
    +
    \Norm{\tangentialto\W \vec \psi}_{\leb2(\boundary\W)}^2
    \\
    +
    \Norm{D_{\lambda}(\varphi, \vec \psi)}_{\leb2(\W)}^2 
    +
    \qp{\theta^2 + (1-\theta)^2}
    \fracl{\lambda}{\rho}
    \Norm{\grad \varphi - \vec \psi}_{\leb2(\W)}^2,
  \end{multline}
  where 
  $\norm{(\varphi, \vec \psi)}_{\lambda, \theta}^2:= \Norm{D \vec \psi}_{L^2(\W)}^2 
  + 2 \lambda \Norm{\theta \vec \psi + (1- \theta)\grad \varphi}_{L^2(\W)}^2
  + \lambda^2 \Norm{\varphi}_{L^2(\W)}^2$
  and 
  $D_{\lambda}(\varphi, \vec \psi):= \div \vec \psi - \lambda \varphi$.
\end{The}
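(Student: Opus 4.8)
Here is how I would prove Theorem~\ref{the:modified-M-T}.

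The plan is to read off \eqref{ineq:modified-M-T} from the Miranda--Talenti estimate of Lemma~\ref{lem-M-T-estimate} taken at the identity coefficient, and then reshape the result with one integration by parts and a single $\rho$-weighted Young inequality. Write $\bar C:=\maxi{C_{\ref{ineq:M-T-estimate}}}1$, so $\bar C\geq1$ and the hypothesis $0<\rho<\fracl1{\bar C}$ gives $\rho<1$ and $\bar C^{-1}-\fracl\rho2>\fracl1{2\bar C}>0$, making the left-hand factor in \eqref{ineq:modified-M-T} strictly positive. First I would note that $\eye$ satisfies the special Cordes condition~\eqref{def:special-Cordes-condition} (since $\norm{\eye}^2/(\trace\eye)^2=1/d\leq1/(d-1+\varepsilon)$ for any $\varepsilon\in(0,1)$) and that $\eye\frobinner\D\vec\psi=\div\vec\psi$, so Lemma~\ref{lem-M-T-estimate} with $\mat A=\eye$ gives
\begin{equation*}
  \bar C^{-1}\Norm{\D\vec\psi}_{\leb2(\W)}^2
  \leq
  \Norm{\rot\vec\psi}_{\leb2(\W)}^2
  +\Norm{\tangentialto\W\vec\psi}_{\leb2(\boundary\W)}^2
  +\Norm{\div\vec\psi}_{\leb2(\W)}^2 .
\end{equation*}
Since $\varphi\in\sobhz1(\W)$ has vanishing boundary trace, $\ltwop{\div\vec\psi}{\varphi}=-\ltwop{\vec\psi}{\grad\varphi}$, so expanding $\Norm{\div\vec\psi-\lambda\varphi}_{\leb2(\W)}^2$ and substituting into the display yields
\begin{equation*}
  \bar C^{-1}\Norm{\D\vec\psi}_{\leb2(\W)}^2
  +2\lambda\ltwop{\vec\psi}{\grad\varphi}
  +\lambda^2\Norm{\varphi}_{\leb2(\W)}^2
  \leq
  \Norm{\rot\vec\psi}_{\leb2(\W)}^2
  +\Norm{\tangentialto\W\vec\psi}_{\leb2(\boundary\W)}^2
  +\Norm{D_\lambda(\varphi,\vec\psi)}_{\leb2(\W)}^2 .
\end{equation*}

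The substantive step is to trade the sign-indefinite term $2\lambda\ltwop{\vec\psi}{\grad\varphi}$ for $2\lambda\Norm{\theta\vec\psi+(1-\theta)\grad\varphi}_{\leb2(\W)}^2$ (the interpolated term inside $\norm{(\varphi,\vec\psi)}_{\lambda,\theta}^2$), paying only a controlled multiple of $\Norm{\grad\varphi-\vec\psi}_{\leb2(\W)}^2$. With $\vec w:=\grad\varphi-\vec\psi$ I would use the polarisation identity
\begin{equation*}
  \Norm{\theta\vec\psi+(1-\theta)\grad\varphi}_{\leb2(\W)}^2
  =
  \ltwop{\vec\psi}{\grad\varphi}
  -(2\theta-1)\ltwop{\vec w}{\theta\vec\psi+(1-\theta)\grad\varphi}
  +\theta(1-\theta)\Norm{\vec w}_{\leb2(\W)}^2 ,
\end{equation*}
bound the cross term (which vanishes when $\theta=\fracl12$) by Young's inequality with weight $\rho/\qv{2\theta-1}$, and simplify using $(2\theta-1)^2=\qp{\theta^2+(1-\theta)^2}-2\theta(1-\theta)$ and $\rho<1$; this turns the identity into the lower bound $2\lambda\ltwop{\vec\psi}{\grad\varphi}\geq2\lambda\qp{1-\fracl\rho2}\Norm{\theta\vec\psi+(1-\theta)\grad\varphi}_{\leb2(\W)}^2-\qp{\theta^2+(1-\theta)^2}\fracl\lambda\rho\Norm{\grad\varphi-\vec\psi}_{\leb2(\W)}^2$.

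Finally I would insert this lower bound into the last display above; the left-hand side becomes $\bar C^{-1}\Norm{\D\vec\psi}^2+2\lambda\qp{1-\fracl\rho2}\Norm{\theta\vec\psi+(1-\theta)\grad\varphi}^2+\lambda^2\Norm{\varphi}^2$, and since $\bar C^{-1}\leq1$ its three coefficients all majorise $\bar C^{-1}-\fracl\rho2$, so replacing them by that common value produces $\qp{\bar C^{-1}-\fracl\rho2}\norm{(\varphi,\vec\psi)}_{\lambda,\theta}^2$ on the left, which is exactly \eqref{ineq:modified-M-T}; the case $\lambda=0$, possible only under~\eqref{def:special-Cordes-condition}, is the first display alone. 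The step I expect to be the real obstacle is the middle one: the polarisation identity and the Young weight must be calibrated so that the residual is precisely $\qp{\theta^2+(1-\theta)^2}\rho^{-1}\lambda\Norm{\grad\varphi-\vec\psi}^2$ and no larger—the sign-indefiniteness of both $\ltwop{\vec\psi}{\grad\varphi}$ and the cross term $(2\theta-1)\ltwop{\vec w}{\,\cdot\,}$ rules out any crude absorption—whereas the Miranda--Talenti input and the integration by parts are routine given Lemma~\ref{lem-M-T-estimate}.
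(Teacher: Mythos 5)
Your route is the one the paper actually intends: the paper gives no self-contained proof of Theorem~\ref{the:modified-M-T} but, through Remark~\ref{rem:modified-theorems}, refers to the proof of Theorem~3.6 of \cite{LakkisMousavi:21:article:A-least-squares} with the generalized Maxwell inequality \eqref{ineq:general-maxwell} substituted for \eqref{ineq:maxwell}, and that argument is precisely your chain: control $\Norm{\D\vec\psi}_{\leb2(\W)}^2$ by $\div\vec\psi$, $\rot\vec\psi$ and the tangential trace, expand $\Norm{D_\lambda(\varphi,\vec\psi)}_{\leb2(\W)}^2$, integrate by parts (legitimate for $\varphi\in\sobhz1(\W)$, $\vec\psi\in\sobh1(\W;\R d)$), then trade $2\lambda\ltwop{\vec\psi}{\grad\varphi}$ for the $\theta$-interpolated term. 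Your middle step is calibrated correctly: with Young weight $\rho/\qv{2\theta-1}$ and $(2\theta-1)^2+2\rho\theta(1-\theta)\leq\theta^2+(1-\theta)^2$ (using $\rho<1$) one indeed gets $2\lambda\ltwop{\vec\psi}{\grad\varphi}\geq 2\lambda\qp{1-\fracl\rho2}\Norm{\theta\vec\psi+(1-\theta)\grad\varphi}_{\leb2(\W)}^2-\qp{\theta^2+(1-\theta)^2}\fracl\lambda\rho\Norm{\grad\varphi-\vec\psi}_{\leb2(\W)}^2$, and the final absorption of the three left-hand coefficients, as well as the $\lambda=0$ remark, are fine \emph{granted your first display}.

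The genuine gap is that first display. Lemma~\ref{lem-M-T-estimate} with $\mat A=\eye$ gives $C_{\ref{ineq:M-T-estimate}}\Norm{\D\vec\psi}_{\leb2(\W)}^2\leq\Norm{\rot\vec\psi}_{\leb2(\W)}^2+\Norm{\tangentialto\W\vec\psi}_{\leb2(\boundary\W)}^2+\Norm{\div\vec\psi}_{\leb2(\W)}^2$, and you may not upgrade the constant to $\dfrac{1}{\maxi{C_{\ref{ineq:M-T-estimate}}}1}$ unless $C_{\ref{ineq:M-T-estimate}}\geq1$; it cannot be: for $\vec\psi=\grad w$ with $w\in\contc\infty(\W)$ one has $\rot\vec\psi=0$, $\tangentialto\W\vec\psi=0$ and $\Norm{\div\vec\psi}_{\leb2(\W)}=\Norm{\lap w}_{\leb2(\W)}=\Norm{\D^2w}_{\leb2(\W)}=\Norm{\D\vec\psi}_{\leb2(\W)}$, so any admissible constant in \eqref{ineq:M-T-estimate} satisfies $C_{\ref{ineq:M-T-estimate}}\leq1$, and your step strictly strengthens the lemma whenever $C_{\ref{ineq:M-T-estimate}}<1$. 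As written, your argument therefore proves \eqref{ineq:modified-M-T} only with $\dfrac{1}{\maxi{C_{\ref{ineq:M-T-estimate}}}1}$ replaced by $\mini{C_{\ref{ineq:M-T-estimate}}}1$, which is strictly weaker than the stated constant whenever $C_{\ref{ineq:M-T-estimate}}<1$. To get the constant exactly as stated, the first inequality must be sourced from the generalized Maxwell inequality \eqref{ineq:general-maxwell} in its upper-bound form $\Norm{\D\vec\psi}_{\leb2(\W)}^2\leq C_{\ref{ineq:general-maxwell}}\qp{\Norm{\div\vec\psi}_{\leb2(\W)}^2+\Norm{\rot\vec\psi}_{\leb2(\W)}^2+\Norm{\tangentialto\W\vec\psi}_{\leb2(\boundary\W)}^2}$: dividing by $C_{\ref{ineq:general-maxwell}}$ and then relaxing $1/C_{\ref{ineq:general-maxwell}}$ to $\dfrac{1}{\maxi{C_{\ref{ineq:general-maxwell}}}1}$ is a genuine weakening, after which your computation goes through verbatim; this is exactly the substitution Remark~\ref{rem:modified-theorems} prescribes, and it indicates that the constant in the statement is really the generalized-Maxwell constant rather than the lower-bound constant of Lemma~\ref{lem-M-T-estimate} that you invoked.
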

\begin{The}[coercivity and continuity of $a_\theta$]
  \label{the:Coercivity-continuity}
  Let $\W \subset \R d$ be a bounded convex open domain and the coefficients 
  $\mat A, \vec b, c$ satisfy (\ref{def:uniformly-elliptic}) and either 
  (\ref{def:general-Cordes-condition}) with 
  $\lambda>0$ or (\ref{def:special-Cordes-condition}) with $\lambda =0$ when 
  $b=0, c=0$ holds. Then $a_\theta$ is coercive and continuous, i.e., there exist 
  $\constref{ineq:coercivity}, \constref{ineq:continuity-linear}>0$ such that for any 
  $(\varphi, \vec \psi), (\varphi', \vec \psi') \in \sobhz1(\W) \times \sobh1(\W; \R d)$
  \begin{multline}
    \label{ineq:coercivity}
    a_\theta(\varphi, \vec \psi ; \varphi, \vec \psi) 
    :=
    \Norm{ \nabla\varphi-\vec\psi}_{\leb2(\W)} ^2 
    +
    \Norm{ \rot\vec\psi }_{\leb2(\W)}^2
    \\
    +
    \Norm{\tangentialto\W \vec \psi}_{\leb2(\boundary\W)}^2
    +
    \Norm{ \linop{M}_\theta(\varphi, \vec\psi) } _{\leb2(\W)}^2
    \geq 
    \constref{ineq:coercivity}
    \Norm{(\varphi, \vec \psi)}_{\sobh1(\W)}^2,
  \end{multline} 
  \begin{equation}
    \label{ineq:continuity-linear}
    \norm{a_\theta(\varphi, \vec \psi ; \varphi', \vec \psi')}
    \leq
    \constref{ineq:continuity-linear}
    \Norm{(\varphi, \vec \psi)}_{\sobh1(\W)} 
    \Norm{(\varphi', \vec \psi')}_{\sobh1(\W)}.
  \end{equation}
\end{The}
\begin{Proof}
  By using Lemma \ref{lem-M-T-estimate} and Theorem \ref{the:modified-M-T}, the proof 
  of coercivity is similar to the argument proof of Theorem 3.7 of \cite{LakkisMousavi:21:article:A-least-squares} and
  for the continuity, we refer to section~3.9 of \cite{LakkisMousavi:21:article:A-least-squares}.
\end{Proof}
\subsection{An equivalent problem to the equation with nonzero boundary}
\label{subsec:equivalent-problem-non-zero-boundary}
If the boundary condition of (\ref{eq:non-divergence-homogeneous}) is nonhomogeneous,
i.e., $\restriction u{\boundary\W}=r \neq 0$ for some $r \in \sobh{3/2}(\boundary \W)$, 
the functional $E_\theta$ is replaced by the extended functional $\tilde{E_\theta}$ as
\begin{equation}
  \begin{gathered}
    \funk{ \tilde{E_\theta}}{\sobh1(\W) \times \sobh1(\W; \R d)
    }\reals
    \\
    \begin{aligned}
      \tilde{E_\theta} (\varphi,\vec\psi)
      := 
      \Norm{ \grad\varphi-\vec\psi}_{\leb2(\W)} ^2
      &
      +
      \Norm{ \rot \vec \psi }_{\leb2(\W)} ^2 
      +
      \Norm{ \linop{M}_\theta(\varphi, \vec\psi)  -f } _{\leb2(\W)}^2
      \\
      +
      \Norm{ \varphi-r }_{\leb2(\boundary\W)}^2
      &
      +
      \Norm{ \tangentialto\W( \vec \psi - \grad r) }_{\leb2(\boundary\W)}^2
    \end{aligned}
  \end{gathered} 
\end{equation} 
and we then consider the Euler--Lagrange equation of the minimization problem 
\begin{equation}
  \label{eq:minimization-non-homogeneous}
  (u, \vec g)
  =\underset
  {\substack{
      (\varphi , \vec\psi) \in\sobh1(\W) \times \sobh1(\W; \R d)
  }}
  \argmin \tilde{E_\theta}(\varphi ,\vec\psi) .
\end{equation}
Indeed, we find $(u, \vec g) \in \sobh1(\W) \times \sobh1(\W; \R d)$ such that,
\begin{multline}
  \label{eq:E-L-eq-2-nonhogen}
  \qa{ 
    \nabla u-\vec g
    ,
    \nabla \varphi - \vec \psi
  } 
  +
  \qa{
    \rot \vec g  , \rot \vec \psi 
  }
  +
  \qa{
    \linop{M}_\theta(u , \vec g)
    , 
    \linop{M}_\theta(\varphi , \vec \psi)
  } 
  +
  \qa{u , \varphi}_{\boundary\W} 
  \\
  +
  \qa{
    \tangentialto\W \vec g 
    ,
    \tangentialto\W \vec \psi 
  }_{\boundary\W}
  =
  \qa{
    f , \linop{M}_\theta(\varphi , \vec \psi)
  }
  +
  \qa{r , \varphi}_{\boundary\W} 
  +
  \qa{
    \tangentialto\W \grad r 
    ,
    \tangentialto\W \vec \psi 
  }_{\boundary\W}
  \\
  \Foreach (\varphi, \vec\psi)\in \sobh1(\W) \times \sobh1(\W; \R d).
\end{multline}
The well-posedness of the resulting problem can be demonstrated using a similar argument to the proof of Theorem~\ref{the:Coercivity-continuity}. Accordingly, we define the bilinear form
\begin{equation}
  \begin{gathered}
    \funk
        {\tilde{a}_\theta}{
          \ppow{
            \sobh1(\W) \times \sobh1(\W; \R d) 
          }2
        }
        \reals
        \\
        \tilde{a}_\theta
        (\varphi ,\vec\psi
        ;
        \varphi' , \vec\psi')
        := 
        a_\theta
        (\varphi ,\vec\psi
        ;
        \varphi' , \vec\psi')
        +
        \ltwop{
          \varphi}{\varphi'}_{\boundary\W}.
  \end{gathered} 
\end{equation}  
\begin{The}[coercivity and continuity of $\tilde{a}_\theta$]
  \label{the:Coercivity-continuity-a-tilde}
  Under the assumption of Lemma~\ref{the:Coercivity-continuity}, there exist 
  $\constref{ineq:coercivity-inhomogen}, \constref{ineq:continuity-inhomogen} > 0$ 
  such that for any 
  $(\varphi, \vec \psi), (\varphi', \vec \psi') \in \sobh1(\W) \times \sobh1(\W; \R d)$
  \begin{equation}
    \label{ineq:coercivity-inhomogen}
    \tilde{a}_\theta(\varphi, \vec \psi ; \varphi, \vec \psi) 
    \geq 
    \constref{ineq:coercivity-inhomogen}
    \Norm{(\varphi, \vec \psi)}_{\sobh1(\W)}^2
  \end{equation}
  \begin{equation}
    \label{ineq:continuity-inhomogen}
    \norm{\tilde{a}_\theta(\varphi, \vec \psi ; \varphi', \vec \psi')}
    \leq
    \constref{ineq:continuity-inhomogen}
    \Norm{(\varphi, \vec \psi)}_{\sobh1(\W)} 
    \Norm{(\varphi', \vec \psi')}_{\sobh1(\W)}
  \end{equation}
\end{The}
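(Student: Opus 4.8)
The plan is to treat \eqref{ineq:coercivity-inhomogen}--\eqref{ineq:continuity-inhomogen} as the inhomogeneous-boundary twin of Theorem~\ref{the:Coercivity-continuity} and to rerun that proof on the larger product space $\sobh1(\W)\times\sobh1(\W;\R d)$, pinpointing the one place where membership of $\varphi$ in $\sobhz1(\W)$ was used and compensating for it with the extra term $\ltwop{\varphi}{\varphi}_{\boundary\W}=\Norm{\varphi}_{\leb2(\boundary\W)}^2$ that $\tilde{a}_\theta=a_\theta+\ltwop{\cdot}{\cdot}_{\boundary\W}$ now carries. The continuity bound \eqref{ineq:continuity-inhomogen} is immediate: the four pairings comprising $a_\theta$ are estimated by Cauchy--Schwarz, by $\Norm{\mat A}_{\leb\infty(\W)},\Norm{\vec b}_{\leb\infty(\W)},\Norm{c}_{\leb\infty(\W)}$, and by the trace inequality $\Norm{\vec\psi}_{\leb2(\boundary\W)}\leq C\Norm{\vec\psi}_{\sobh1(\W)}$, exactly as in Theorem~\ref{the:Coercivity-continuity} (no zero trace was used there), while the new term satisfies $\abs{\ltwop{\varphi}{\varphi'}_{\boundary\W}}\leq\Norm{\varphi}_{\leb2(\boundary\W)}\Norm{\varphi'}_{\leb2(\boundary\W)}\leq C\Norm{(\varphi,\vec\psi)}_{\sobh1(\W)}\Norm{(\varphi',\vec\psi')}_{\sobh1(\W)}$ by the same trace inequality.

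For coercivity I would follow the argument behind Theorem~\ref{the:Coercivity-continuity}, itself modelled on Theorem~3.7 of \citet{LakkisMousavi:21:article:A-least-squares}, now with the generalized Maxwell inequality \eqref{ineq:general-maxwell} and the modified Miranda--Talenti estimate \eqref{ineq:modified-M-T} in the roles played there by their homogeneous-tangential-trace analogues. A preliminary observation is that Lemma~\ref{lem-M-T-estimate}, Theorem~\ref{the:modified-M-T} and the Cordes-condition identity relating $\linop{M}_\theta(\varphi,\vec\psi)$ to $D_\lambda(\varphi,\vec\psi)=\div\vec\psi-\lambda\varphi$ all survive verbatim with $\varphi$ ranging over $\sobh1(\W)$ rather than $\sobhz1(\W)$, since their proofs only manipulate $\D\vec\psi$, $\grad\varphi-\vec\psi$, $D_\lambda(\varphi,\vec\psi)$ and $\theta\vec\psi+(1-\theta)\grad\varphi$ in $\leb2$ norm and never invoke a vanishing trace of $\varphi$. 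Granting this, and choosing $\rho$ small, the Cordes bound and \eqref{ineq:modified-M-T} let $\tilde{a}_\theta(\varphi,\vec\psi;\varphi,\vec\psi)$ control (after a standard absorption) $\norm{(\varphi,\vec\psi)}_{\lambda,\theta}^2+\Norm{\grad\varphi-\vec\psi}_{\leb2(\W)}^2$, hence in particular $\Norm{\D\vec\psi}_{\leb2(\W)}^2$; together with $\Norm{\tangentialto\W\vec\psi}_{\leb2(\boundary\W)}^2$ and the equivalent-norm property for $\sobh1(\W;\R d)$-fields already used in the homogeneous case (a compactness argument powered by \eqref{ineq:general-maxwell}), this yields $\Norm{\vec\psi}_{\sobh1(\W)}^2$, after which $\Norm{\grad\varphi}_{\leb2(\W)}^2\leq2\Norm{\vec\psi}_{\leb2(\W)}^2+2\Norm{\grad\varphi-\vec\psi}_{\leb2(\W)}^2$ is controlled too.

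The single genuinely new ingredient concerns $\Norm{\varphi}_{\leb2(\W)}$. When $\lambda>0$ it already sits inside $\norm{(\varphi,\vec\psi)}_{\lambda,\theta}^2$ through the term $\lambda^2\Norm{\varphi}_{\leb2(\W)}^2$, so nothing further is needed. When $\lambda=0$ (the special Cordes case $\vec b=0$, $c=0$) the homogeneous proof used the Poincar\'e--Friedrichs inequality on $\sobhz1(\W)$, which is no longer at our disposal; instead I would invoke the Poincar\'e-type inequality
\[
  \Norm{\varphi}_{\leb2(\W)}^2
  \leq
  C\qp{\Norm{\grad\varphi}_{\leb2(\W)}^2+\Norm{\varphi}_{\leb2(\boundary\W)}^2}
  \Foreach\varphi\in\sobh1(\W),
\]
valid on any bounded convex (indeed Lipschitz) $\W$ by a routine compactness argument in the spirit of the proof of Lemma~\ref{lem:general-maxwell-inequality}, whose right-hand side is controlled by $\tilde{a}_\theta(\varphi,\vec\psi;\varphi,\vec\psi)$ precisely because $\Norm{\varphi}_{\leb2(\boundary\W)}^2$ is the term $\tilde{a}_\theta$ appends to $a_\theta$. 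Collecting the pieces gives $\tilde{a}_\theta(\varphi,\vec\psi;\varphi,\vec\psi)\geq\constref{ineq:coercivity-inhomogen}\Norm{(\varphi,\vec\psi)}_{\sobh1(\W)}^2$, which is \eqref{ineq:coercivity-inhomogen}.

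The main obstacle I anticipate is not a single hard estimate but the bookkeeping hidden behind the phrase ``by a similar argument'': re-running the constant-chasing in the Cordes inequality and \eqref{ineq:modified-M-T} so that, after absorbing the $\sqrt{1-\varepsilon}$-type contribution, the surviving prefactor stays strictly positive while the two coercivity-supplying boundary terms $\Norm{\tangentialto\W\vec\psi}_{\leb2(\boundary\W)}^2$ and $\Norm{\varphi}_{\leb2(\boundary\W)}^2$ retain usable weights; and verifying explicitly, rather than merely asserting, that every step of the Miranda--Talenti/Cordes/Maxwell machinery of \citet{LakkisMousavi:21:article:A-least-squares} is genuinely independent of the boundary condition imposed on $\varphi$, so that the passage from $\sobhz1(\W)\times\sobh1(\W;\R d)$ to $\sobh1(\W)\times\sobh1(\W;\R d)$ is legitimate.
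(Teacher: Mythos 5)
Your proposal is correct and follows essentially the same route as the paper: the continuity bound is the routine Cauchy--Schwarz/trace estimate, and the coercivity is obtained by rerunning the homogeneous argument (generalized Maxwell, modified Miranda--Talenti, Cordes) on $\sobh1(\W)\times\sobh1(\W;\R d)$, replacing the Poincar\'e--Friedrichs inequality on $\sobhz1(\W)$ by Poincar\'e-type inequalities with the boundary terms $\Norm{\varphi}_{\leb2(\boundary\W)}^2$ and $\Norm{\tangentialto\W\vec\psi}_{\leb2(\boundary\W)}^2$ that $\tilde a_\theta$ and the functional supply. This is precisely the paper's proof, which invokes exactly those two boundary-augmented Poincar\'e inequalities and then refers back to the argument of Theorem~\ref{the:Coercivity-continuity}.
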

\begin{Proof}
  Using Poincaré's inequality: there exists $C_{\ref{ineq:Poincare}}>0$ such that for any $(\varphi, \vec \psi) \in \sobh1(\W) \times \sobh1(\W; \R d)$ we have
  \begin{equation}%
    \begin{gathered}
      \label{ineq:Poincare}
      \Norm{\grad \varphi}_{\leb2(\W)}^2
      +
      \Norm{\varphi}_{\leb2(\boundary\W)}^2
      \geq
      C_{\ref{ineq:Poincare}}
      \Norm{\varphi}_{\sobh1(\W)}^2,
      \\
      \Norm{\D \vec \psi}_{\leb2(\W)}^2
      +
      \Norm{\tangentialto\W \vec \psi}_{\leb2(\boundary\W)}^2
      \geq
      C_{\ref{ineq:Poincare}}
      \Norm{\vec \psi}_{\sobh1(\W)}^2,
    \end{gathered}
  \end{equation}%
  the argument is analogous to what is presented in the proof of Theorem~\ref{the:Coercivity-continuity}.
\end{Proof}
\subsection{Galerkin finite element discretization}
\label{subsec:finite-element-discrete}
Consider $\mathfrak T$ as a collection of shape-regular conforming
\indexen{simplicial parititions} \aka{\indexen{triangulations}} of
$\W$ into \indexen{simplices}. Given $\mesh t\in\mathfrak T$, for each
$K\in\mesh T$, let $h_{K}:=\diam{K}$ and write $h:=\underset {\substack{ K \in\mesh T}}\max~ h_{K}$.
Having a curved boundary $\boundary\W$ prevents $ \underset {\substack{ K \in\mesh T}} \bigcup K $ of coinciding with $\W$. In this case, one can approximate sections of $\partial\W$, using line segments or simple curves. This approach results in simplices with curved sides, known as isoparametric elements.
Consider the following Galerkin finite 
element spaces
\begin{gather}
  \label{def:interpolation-space}
  \fespace u
  :=
  \poly{k}\qp{\mesh T}\meet\sobhz1(\W),
  \quad
  \tilde{\fespace u}
  :=
  \poly{k}\qp{\mesh T}\meet\sobh1(\W),
  \quad 
  \fespace g
  :=
  \poly{k}\qp{\mesh T;\R d}
  \meet
  \sobh1(\W; \R d)
  .
\end{gather}
By applying these Galerkin finite element spaces, the discrete problem corresponding to 
zero boundary condition finds $(\fe u_{\fespace u}, \vecfe g_{\fespace g}) \in \fespace u \times \fespace g$ 
such that
\begin{equation}
  \label{eq:discrete}
  a_\theta(\fe u_{\fespace u}, \vecfe g_{\fespace g}; \varphi, \vec \psi) 
  =
  \qa{
    f , \linop{M}_\theta(\varphi , \vec \psi)
  }
  \Foreach (\varphi, \vec\psi)\in \fespace u \times \fespace g
\end{equation}
and the discrete problem corresponding to 
nonzero boundary condition finds 
$(\fe u_{\tilde{\fespace u}}, \vecfe g_{\fespace g}) \in \tilde{\fespace u} \times \fespace g$ 
such that
\begin{equation}
  \label{eq:discrete-non-zero-boundary}
  \tilde{a}_\theta(\fe u_{\tilde{\fespace u}}, \vecfe g_{\fespace g}; \varphi, \vec \psi)  
  =
  \qa{f,  \linop{M}_\theta(\varphi, \vec \psi )} 
  +
  \qa{ r, \varphi}_{\boundary\W}
  +
  \qa{\tangentialto\W \grad r , \tangentialto\W \vec \psi}_{\boundary\W} 
  \Foreach (\varphi, \vec\psi)\in \tilde{\fespace u} \times \fespace g.
\end{equation}
Since the coercivity is inherited by subspaces,
Theorem~\ref{the:Coercivity-continuity} and
Theorem~\ref{the:Coercivity-continuity-a-tilde} imply that both
discrete problems~(\ref{eq:discrete}) and
(\ref{eq:discrete-non-zero-boundary}) are well-posed.
The solutions of both discrete problems satisfy the following
error estimate theorems.
\begin{The}[\apriori error estimate for linear equation in nondivergence form]
  \label{the:convergence-rate}
  Let $\mesh T$ be in a collection $\mathfrak{T}$ of shape-regular 
  conforming simplicial meshes on the polyhedral domain $\W\subseteq\R d$. 
  Moreover assume that the strong solution $u$ of 
  (\ref{eq:non-divergence-homogeneous}) with $\restriction u{\boundary\W}=0$ 
  satisfies $u \in \sobh{\rho+2}(\W)$,
  for some real $\rho > 0$.
  Let $(\fe u_{\fespace u}, \vecfe g_{\fespace g}) \in \fespace u \times \fespace g$ 
  be the finite element approximation of (\ref{eq:discrete}).
  Then for some 
  $\constref{eqn:convergence-rate}>0$, independent
  of $u$ and $h$ we have
  \begin{equation}
    \label{eqn:convergence-rate}
    \Norm{(u,\grad u) - (\fe u_{\fespace u}, \vecfe g_{\fespace g})}_{\sobh1(\W)} 
    \leq
    \constref{eqn:convergence-rate} h^{\mini{k}\rho}
    \Norm{u}_{\sobh{\rho+2}(\W)}.
  \end{equation}
\end{The}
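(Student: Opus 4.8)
\emph{Proof plan.} I would run the standard quasi-optimality (C\'ea) argument combined with polynomial approximation, so the only point really particular to this setting is the bookkeeping of the regularity exponents. Since $\rho>0$, the hypothesis $u\in\sobh{\rho+2}(\W)$ gives $u\in\sobh2(\W)\meet\sobhz1(\W)$ and $\grad u\in\sobh{\rho+1}(\W;\R d)\subset\sobh1(\W;\R d)$; hence the exact pair $(u,\grad u)$ belongs to $\sobhz1(\W)\times\sobh1(\W;\R d)$, and by the equivalence of \eqref{eq:non-divergence-homogeneous} with the minimisation \eqref{eq:minimization} it solves the Euler--Lagrange equation \eqref{eq:E-L-eq-2}. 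As $\fespace u\times\fespace g$ is a conforming subspace of $\sobhz1(\W)\times\sobh1(\W;\R d)$ and the right-hand-side functional $(\varphi,\vec\psi)\mapsto\qa{f,\linop{M}_\theta(\varphi,\vec\psi)}$ is the same one appearing in \eqref{eq:E-L-eq-2} and in the discrete problem \eqref{eq:discrete}, subtracting the two identities yields the Galerkin orthogonality
\[
  a_\theta\qp{u-\fe u_{\fespace u},\grad u-\vecfe g_{\fespace g}\,;\,\varphi,\vec\psi}=0
  \Foreach(\varphi,\vec\psi)\in\fespace u\times\fespace g.
\]

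Next I would bring in Theorem~\ref{the:Coercivity-continuity}. Splitting $(u,\grad u)-(\fe u_{\fespace u},\vecfe g_{\fespace g})$ into $(u,\grad u)-(\varphi,\vec\psi)$ plus an element of $\fespace u\times\fespace g$ for an arbitrary $(\varphi,\vec\psi)\in\fespace u\times\fespace g$, testing the orthogonality relation against the discrete part, and using the coercivity bound \eqref{ineq:coercivity} and the continuity bound \eqref{ineq:continuity}, one obtains the usual quasi-optimality estimate
\[
  \Norm{(u,\grad u)-(\fe u_{\fespace u},\vecfe g_{\fespace g})}_{\sobh1(\W)}
  \leq
  \dfrac{\constref{ineq:continuity}}{\constref{ineq:coercivity}}
  \inf_{(\varphi,\vec\psi)\in\fespace u\times\fespace g}
  \Norm{(u,\grad u)-(\varphi,\vec\psi)}_{\sobh1(\W)}.
\]

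It then remains to estimate the best-approximation error. Because $\W$ is polyhedral and the meshes in $\mathfrak T$ are shape-regular, I would take $\varphi=\Pi_{\fespace u}u$, where $\Pi_{\fespace u}$ is an $\sobh1(\W)$-stable Scott--Zhang-type quasi-interpolant onto $\fespace u$ preserving the homogeneous Dirichlet trace, and $\vec\psi=\Pi_{\fespace g}\grad u$ with $\Pi_{\fespace g}$ the componentwise quasi-interpolant onto $\fespace g$; here it helps that, the tangential-trace condition having been absorbed into the cost functional, $\fespace g$ carries no essential constraint, so $\Pi_{\fespace g}\grad u$ is admissible without modification. Standard Bramble--Hilbert estimates on shape-regular meshes then give $\Norm{u-\Pi_{\fespace u}u}_{\sobh1(\W)}\lesssim h^{\min\{k,\rho+1\}}\Norm{u}_{\sobh{\rho+2}(\W)}$ and, since $\grad u\in\sobh{\rho+1}(\W;\R d)$, $\Norm{\grad u-\Pi_{\fespace g}\grad u}_{\sobh1(\W)}\lesssim h^{\min\{k,\rho\}}\Norm{\grad u}_{\sobh{\rho+1}(\W)}\lesssim h^{\min\{k,\rho\}}\Norm{u}_{\sobh{\rho+2}(\W)}$. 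Adding the squares under the product norm and keeping the slower rate $h^{\min\{k,\rho\}}$ yields \eqref{eqn:convergence-rate}, with $\constref{eqn:convergence-rate}$ depending only on $\constref{ineq:coercivity}$, $\constref{ineq:continuity}$, the polynomial degree $k$, the dimension $d$, and the shape-regularity of $\mathfrak T$.

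I do not expect a genuine obstacle. The one delicate point --- and the reason the rate is exactly $h^{\min\{k,\rho\}}$ rather than $h^{\min\{k,\rho+1\}}$ --- is that the recovered-gradient unknown $\vec g=\grad u$ is measured in the $\sobh1(\W)$ norm while it only enjoys $\sobh{\rho+1}$ regularity, one order less than $u$, so its approximation contribution dominates the error. A secondary technicality is that for non-integer or small $\rho\in(0,1)$ one should rely on Cl\'ement/Scott--Zhang quasi-interpolation (no point values needed, $\sobh1(\W)$-stable) together with Sobolev interpolation-space arguments to justify the fractional approximation orders. Finally, the nonhomogeneous case \eqref{eq:discrete-non-zero-boundary} is handled by the identical scheme, with $a_\theta$, $\fespace u$ and Theorem~\ref{the:Coercivity-continuity} replaced by $\tilde{a}_\theta$, $\tilde{\fespace u}$ and Theorem~\ref{the:Coercivity-continuity-a-tilde}.
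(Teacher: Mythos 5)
Your proposal is correct and follows essentially the same route the paper relies on: the paper's proof is a citation to Theorem~4.6 of \citet{LakkisMousavi:21:article:A-least-squares}, whose argument is exactly your two steps --- quasi-optimality via the coercivity and continuity of $a_\theta$ (Theorem~\ref{the:Coercivity-continuity}), then Scott--Zhang/Bramble--Hilbert interpolation --- the same structure the paper itself reproduces for the nonlinear case in Theorem~\ref{the:quasi-optimality} and Proposition~\ref{pro:convergence-rate-HJB}. Your rate bookkeeping, with the recovered gradient (only $\sobh{\rho+1}$-regular, measured in $\sobh1$) dictating the exponent $h^{\min\{k,\rho\}}$, and your observation that the tangential-trace condition sits in the functional rather than the space so the interpolant of $\grad u$ needs no constraint, are both consistent with the cited result.
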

\begin{Proof}
  We refer to Theorem~4.6 of \cite{LakkisMousavi:21:article:A-least-squares}.
\end{Proof}
With a same argument of the proof, we can also show the \apriori error bound of the 
discrete problem corresponding to nonzero boundary as the following theorem.
\begin{The}[\apriori error estimate for linear equation in nondivergence form with nonzero boundary]
  \label{the:convergence-rate-inhomogen}
  Suppose that the assumptions of Theorem~\ref{the:convergence-rate} on the domain 
  and simplicial meshes are hold. Assume that the strong solution $u$ of 
  (\ref{eq:non-divergence-homogeneous}) with $\restriction u{\boundary\W}=r \neq 0$ 
  for some $r \in \sobh{3/2}(\boundary \W)$ satisfies $u \in \sobh{\rho+2}(\W)$,
  for some real $\rho > 0$. Let 
  $(\fe u_{\tilde{\fespace u}}, \vecfe g_{\fespace g}) \in \tilde{\fespace u} \times \fespace g$ 
  be the finite element approximation of (\ref{eq:discrete-non-zero-boundary}).
  Then for some 
  $\constref{eqn:convergence-rate-inhomogen}>0$, independent
  of $u$ and $h$ we have
  \begin{equation}
    \label{eqn:convergence-rate-inhomogen}
    \Norm{(u,\grad u) - (\fe u_{\tilde{\fespace u}}, \vecfe g_{\fespace g})}_{\sobh1(\W)} 
    \leq
    \constref{eqn:convergence-rate-inhomogen} h^{\mini{k}\rho}
    \Norm{u}_{\sobh{\rho+2}(\W)}.
  \end{equation}
\end{The}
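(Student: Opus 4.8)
The plan is to reproduce the argument behind Theorem~\ref{the:convergence-rate} with the bilinear form $a_\theta$ and the test/trial space $\fespace u$ replaced by their nonhomogeneous counterparts $\tilde{a}_\theta$ and $\tilde{\fespace u}$, carrying the boundary datum $r$ along. First I would record Galerkin orthogonality. The pair $(u,\grad u)$, with $u$ the strong solution of (\ref{eq:non-divergence-homogeneous}) with $\restriction u{\boundary\W}=r$, satisfies the continuous Euler--Lagrange identity (\ref{eq:E-L-eq-2-nonhogen}) for all test functions in $\sobh1(\W)\times\sobh1(\W;\R d)$, while the discrete pair $(\fe u_{\tilde{\fespace u}},\vecfe g_{\fespace g})$ satisfies the discrete identity (\ref{eq:discrete-non-zero-boundary}) with the \emph{same} right-hand side functional $(\varphi,\vec\psi)\mapsto\qa{f,\linop{M}_\theta(\varphi,\vec\psi)}+\qa{r,\varphi}_{\boundary\W}+\qa{\tangentialto\W\grad r,\tangentialto\W\vec\psi}_{\boundary\W}$. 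Subtracting the two, restricted to $(\varphi,\vec\psi)\in\tilde{\fespace u}\times\fespace g$, gives
\[
  \tilde{a}_\theta\bigl(u-\fe u_{\tilde{\fespace u}},\,\grad u-\vecfe g_{\fespace g};\,\varphi,\vec\psi\bigr)=0
  \quad\Foreach(\varphi,\vec\psi)\in\tilde{\fespace u}\times\fespace g .
\]

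Next I would invoke the coercivity and continuity of $\tilde{a}_\theta$ on the full space $\sobh1(\W)\times\sobh1(\W;\R d)$ established in Theorem~\ref{the:Coercivity-continuity-a-tilde}. A crucial simplification, compared with a standard conforming method for a Dirichlet problem, is that the nonhomogeneous boundary condition is imposed only weakly, through the terms $\Norm{\varphi-r}_{\leb2(\boundary\W)}^2$ and $\Norm{\tangentialto\W(\vec\psi-\grad r)}_{\leb2(\boundary\W)}^2$ of the least-squares functional $\tilde{E_\theta}$; in particular $\tilde{\fespace u}\times\fespace g$ is simply a closed subspace of $\sobh1(\W)\times\sobh1(\W;\R d)$ with no trace constraint, so a textbook C\'ea argument applies verbatim. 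Combining Galerkin orthogonality with (\ref{ineq:coercivity-inhomogen}) and (\ref{ineq:continuity-inhomogen}) yields the quasi-optimality estimate
\[
  \Norm{(u,\grad u)-(\fe u_{\tilde{\fespace u}},\vecfe g_{\fespace g})}_{\sobh1(\W)}
  \leq
  \frac{\constref{ineq:continuity-inhomogen}}{\constref{ineq:coercivity-inhomogen}}
  \inf_{(\varphi,\vec\psi)\in\tilde{\fespace u}\times\fespace g}
  \Norm{(u,\grad u)-(\varphi,\vec\psi)}_{\sobh1(\W)} .
\]

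Finally I would bound the best-approximation error by inserting a componentwise quasi-interpolant: the Scott--Zhang interpolant of $u$ into $\tilde{\fespace u}=\poly{k}\qp{\mesh T}\meet\sobh1(\W)$ and of $\grad u$ into $\fespace g=\poly{k}\qp{\mesh T;\R d}\meet\sobh1(\W;\R d)$. Since $u\in\sobh{\rho+2}(\W)$ forces $\grad u\in\sobh{\rho+1}(\W;\R d)$, the standard polynomial approximation estimates on the shape-regular family $\mathfrak T$ give an $\sobh1$-error of order $h^{\min\{k,\rho+1\}}\Norm{u}_{\sobh{\rho+2}(\W)}$ for the scalar component and of order $h^{\min\{k,\rho\}}\Norm{\grad u}_{\sobh{\rho+1}(\W)}$ for the vector component; the latter exponent is the smaller one and $\Norm{\grad u}_{\sobh{\rho+1}(\W)}\leq\Norm{u}_{\sobh{\rho+2}(\W)}$, so the overall rate is $h^{\mini{k}\rho}$ and (\ref{eqn:convergence-rate-inhomogen}) follows. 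This is precisely the chain of estimates behind Theorem~4.6 of \cite{LakkisMousavi:21:article:A-least-squares}, so in the write-up I would cite that result for the routine interpolation bounds and merely flag the single new ingredient, the extra boundary terms $\qa{r,\varphi}_{\boundary\W}$ and $\qa{\tangentialto\W\grad r,\tangentialto\W\vec\psi}_{\boundary\W}$, which appear identically in the continuous and discrete formulations and therefore drop out of the error identity. I do not expect a genuine obstacle: the only points needing care are checking that $r\in\sobh{3/2}(\boundary\W)$ is regular enough for $\grad r$ to possess an $\leb2(\boundary\W)$ tangential trace, so that $\tilde{E_\theta}$ and its Euler--Lagrange equation are meaningful, and that the interpolation operator is $\sobh1$-stable and maps into the unconstrained discrete spaces, both of which are standard.
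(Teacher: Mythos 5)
Your proposal is correct and matches the paper's intended argument: the paper proves this result by simply invoking ``the same argument'' as Theorem~\ref{the:convergence-rate} (i.e.\ Theorem~4.6 of the cited least-squares paper), which is exactly the chain you spell out --- Galerkin orthogonality from the common right-hand side of (\ref{eq:E-L-eq-2-nonhogen}) and (\ref{eq:discrete-non-zero-boundary}), C\'ea quasi-optimality via the coercivity and continuity of $\tilde{a}_\theta$ in Theorem~\ref{the:Coercivity-continuity-a-tilde}, and the standard $\sobh1$ interpolation bounds giving the rate $h^{\mini{k}\rho}$ dominated by the gradient component. Your added care about $\tangentialto\W\grad u=\tangentialto\W\grad r$ on $\boundary\W$ and the weak imposition of the boundary data is exactly the point that makes the unconstrained trial space argument go through, so no gap remains.
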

\begin{Obs}[domain with curved boundary]
  If $\W$ includes a curved boundary, the isoparametric finite element is used. Smooth 
  or piecewise smooth boundary($\boundary\W$) ensures that the error bound of 
  using isoparametric finite element similar to that of Theorem~\ref{the:convergence-rate} 
  and Theorem~\ref{the:convergence-rate-inhomogen}. 
  This is established in \cite{Ciarlet}.
\end{Obs}
\begin{The}[\aposteriori error-residual estimate for linear equation in nondivergence form]
  \label{the:residual-error-bound}
  Let $u$ be the strong solution of (\ref{eq:non-divergence-homogeneous}).
  \begin{enumerate}[(a)\ ]
  \item
    For zero boundary condition, let $(\fe u_{\fespace u}, \vecfe g_{\fespace g})$ be the unique 
    solution of (\ref{eq:discrete}).
    \begin{enumerate}[(i)\ ]
    \item
      The following a posteriori residual upper bounds holds
      \begin{equation}
        \label{eqn:aposteriori-error-bound}
        \Norm{ (u, \grad u)-(\fe u_{\fespace u},\vecfe g_{\fespace g})}_{\sobh1(\W)}^2
        \leq
        \constref{ineq:coercivity}^{-1} E_\theta (u_{\fespace u}, \vec g_{\fespace g}).
      \end{equation} 
    \item
      For any open subdomain $\w \subseteq \Omega$ we have 
      \begin{multline*}
        \label{eqn:lower-error-bound}
        \Norm{ \grad \fe u_{\fespace u}- \vecfe g_{\fespace g} }_{\leb2(\w)}^2 
        +
        \Norm{ \rot \vecfe g_{\fespace g} }_{\leb2(\w)}^2
        +
        \Norm{ \linop{M}_\theta(\fe u_{\fespace u}, \vecfe g_{\fespace g}) -f }_{\leb2(\w)}^2  
        +
        \Norm{ \tangentialto\W \vecfe g_{\fespace g} }_{\leb2(\boundary\w \meet \boundary\W)}^2 
        \\
        \leq
        \constref[\w]{ineq:continuity-linear}
        \Norm{ (u, \grad u)-(\fe u_{\fespace u},\vecfe g_{\fespace g})}_{\sobh1(\w)}^2,
      \end{multline*}
      where $\constref[\w]{ineq:continuity-linear}$ is the continuity constant of $a_\theta$ 
      on $\sobhz1(\w) \times \sobh1(\w; \R d)$.
    \end{enumerate}
  \item
    For nonzero boundary condition, let $(\fe u_{\tilde{\fespace u}}, \vecfe g_{\fespace g})$ be
    the unique solution of (\ref{eq:discrete-non-zero-boundary}).
    \begin{enumerate}[(i)\ ]
    \item
      The following a posteriori residual upper bounds holds
      \begin{equation}
        \label{eqn:aposteriori-error-bound-inhomogen}
        \Norm{ (u, \grad u)-(\fe u_{\tilde{\fespace u}},\vecfe g_{\fespace g})}_{\sobh1(\W)}^2
        \leq
	\constref{ineq:coercivity-inhomogen}^{-1}\tilde{ E_\theta} (\fe u_{\tilde{\fespace u}}, \vecfe g_{\fespace g}).
      \end{equation} 			
    \item
      For any open subdomain $\w \subseteq \Omega$ we have 
      \begin{multline*}
        \label{eqn:lower-error-bound-inhomogen}
        \Norm{ \grad \fe u_{\tilde{\fespace u}}- \vecfe g_{\fespace g} }_{\leb2(\w)}^2 
        +
        \Norm{ \rot \vecfe g_{\fespace g} }_{\leb2(\w)}^2
        +
        \Norm{ \linop{M}_\theta(\fe u_{\tilde{\fespace u}}, \vecfe g_{\fespace g}) -f }_{\leb2(\w)}^2 
        +
        \Norm{ \fe u_{\tilde{\fespace u}} - r }_{\leb2(\boundary\w \meet \boundary\W)}^2
        \\
        +
        \Norm{ \tangentialto\W ( \vecfe g_{\fespace g}  - \grad r) }_{\leb2(\boundary\w \meet \boundary\W)}^2 
        \leq
        \constref[\w]{ineq:continuity-inhomogen}
        \Norm{ (u, \grad u)-(\fe u_{\tilde{\fespace u}},\vecfe g_{\fespace g})}_{\sobh1(\w)}^2,
      \end{multline*}
      where $\constref[\w]{ineq:continuity-inhomogen}$ is the continuity 
      constant of $\tilde{a}_\theta$ on $\sobh1(\w) \times \sobh1(\w; \R d)$.
    \end{enumerate}
  \end{enumerate}
\end{The}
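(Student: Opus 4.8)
The plan is to exploit the defining feature of a least-squares (minimum-residual) Galerkin scheme: because the quadratic functional $E_\theta$ equals the energy $a_\theta(\cdot\,;\cdot)$ associated with the bilinear form up to data-only terms, no Galerkin orthogonality is needed, and the energy of the discretisation error coincides \emph{identically} with the residual functional evaluated at the discrete solution. One application each of coercivity and continuity then closes the upper and lower bounds.

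For part~(a) I would first recall from Section~\ref{sec:Variational-linear} (the equivalence stated just after \eqref{eq:minimization}; see also \citet{LakkisMousavi:21:article:A-least-squares}) that the strong solution $u$ of \eqref{eq:non-divergence-homogeneous} with $\restriction u{\boundary\W}=0$ is the unique minimiser of $E_\theta$ over $\sobhz1(\W)\times\sobh1(\W;\R d)$ with $E_\theta(u,\grad u)=0$; equivalently each of the four nonnegative terms of $E_\theta$ vanishes at $(u,\grad u)$, that is $\grad u=\vec g$ a.e.\ in $\W$, $\rot\grad u=0$, $\tangentialto\W\grad u=0$ (the last because the tangential trace of $\grad u$ on $\boundary\W$ is the surface gradient of $\restriction u{\boundary\W}=0$), and $\linop M_\theta(u,\grad u)=f$ a.e. Now set $e:=(u,\grad u)-(\fe u_{\fespace u},\vecfe g_{\fespace g})$. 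Since the maps $(\varphi,\vec\psi)\mapsto\grad\varphi-\vec\psi$, $\vec\psi\mapsto\rot\vec\psi$, $\vec\psi\mapsto\tangentialto\W\vec\psi$ and $(\varphi,\vec\psi)\mapsto\linop M_\theta(\varphi,\vec\psi)$ are linear and all annihilate $(u,\grad u)$, applying them to $e$ reproduces, up to sign, exactly the four residual quantities of $(\fe u_{\fespace u},\vecfe g_{\fespace g})$; hence $a_\theta(e;e)=E_\theta(\fe u_{\fespace u},\vecfe g_{\fespace g})$ with no error term. The coercivity bound \eqref{ineq:coercivity} of Theorem~\ref{the:Coercivity-continuity} then yields $\constref{ineq:coercivity}\Norm{e}_{\sobh1(\W)}^2\le a_\theta(e;e)=E_\theta(\fe u_{\fespace u},\vecfe g_{\fespace g})$, which is \eqref{eqn:aposteriori-error-bound}.

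For the local lower bound (a)(ii) I would restrict every integral in the identity $a_\theta(e;e)=E_\theta(\fe u_{\fespace u},\vecfe g_{\fespace g})$ to the subdomain $\w$, the boundary integral to $\boundary\w\meet\boundary\W$. The left-hand side thus becomes precisely the sum of local norms appearing in the claimed estimate, and it equals $a_\theta^{\w}(e;e)$, the bilinear form $a_\theta$ localised to $\w$. Continuity of $a_\theta^{\w}$ on $\sobhz1(\w)\times\sobh1(\w;\R d)$ --- which follows from the same Cauchy--Schwarz argument as \eqref{ineq:continuity} with the $\leb\infty(\W)$-bounds on $\mat A,\vec b,c$ restricted to $\w$ --- gives $a_\theta^{\w}(e;e)\le\constref[\w]{ineq:continuity}\Norm{e}_{\sobh1(\w)}^2$, the desired bound.

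Part~(b) runs identically with $(a_\theta,E_\theta,\constref{ineq:coercivity})$ replaced by $(\tilde a_\theta,\tilde E_\theta,\constref{ineq:coercivity-inhomogen})$ and two further residual terms. The only additional input is that the strong solution of \eqref{eq:non-divergence-homogeneous} with $\restriction u{\boundary\W}=r$ satisfies $\restriction{(u-r)}{\boundary\W}=0$ and $\tangentialto\W(\grad u-\grad r)=0$ --- again because the tangential trace of $\grad u$ on $\boundary\W$ equals the surface gradient of $\restriction u{\boundary\W}=\restriction r{\boundary\W}$ --- so that $\tilde E_\theta(u,\grad u)=0$; thereafter the same linearity/annihilation identity gives $\tilde a_\theta(e;e)=\tilde E_\theta(\fe u_{\tilde{\fespace u}},\vecfe g_{\fespace g})$, and Theorem~\ref{the:Coercivity-continuity-a-tilde} supplies the coercivity and (localised) continuity constants. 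I expect no genuine obstacle: the heavy analytic work --- well-posedness of the strong solution and coercivity/continuity of $a_\theta$ and $\tilde a_\theta$ --- is already done in Theorems~\ref{The:well-posedness-strong-solution}, \ref{the:Coercivity-continuity} and \ref{the:Coercivity-continuity-a-tilde}; the only point demanding care is the boundary-trace bookkeeping, namely that $\tangentialto\W\grad u$ depends only on $\restriction u{\boundary\W}$ and that localisation sends the boundary contributions exactly onto $\boundary\w\meet\boundary\W$.
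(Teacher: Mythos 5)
Your argument is correct and is exactly the standard least-squares reliability/efficiency argument that the paper delegates to Theorem~4.3 of \citet{LakkisMousavi:21:article:A-least-squares}: since $E_\theta$ (resp.\ $\tilde E_\theta$) vanishes at $(u,\grad u)$ and its residual maps are linear, one gets the identity $a_\theta(e;e)=E_\theta(\fe u_{\fespace u},\vecfe g_{\fespace g})$ (resp.\ the tilde version), after which coercivity from Theorem~\ref{the:Coercivity-continuity} or \ref{the:Coercivity-continuity-a-tilde} gives the upper bound and localised Cauchy--Schwarz continuity gives the lower bound. The paper itself offers no independent proof beyond this citation, and your boundary-trace bookkeeping (that $\tangentialto\W\grad u$ is determined by $\restriction u{\boundary\W}$) correctly handles the only delicate point.
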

\begin{Proof}
  We refer to Theorem~4.3 of \cite{LakkisMousavi:21:article:A-least-squares}.
\end{Proof}
\section{Discretization}
\label{sec:Discretization}
In this section, we turn to the practical approximation of Howard's algorithm, as clarified in \S\ref{sec:Howard-algorithm}, by applying the method from \S\ref{sec:Variational-linear} to solve each iteration of problem~(\ref{eq:linear-Howard's-algorithm}). 
We discuss the resulting approximation error as well.
Hereupon, we  present an \apriori (Theorem~\ref{pro:convergence-rate-HJB}) and an a posteriori (Theorem \ref{eqn:aposteriori-error-bound-HJB}) error analysis of the approximation. %
In the spirit of the a posteriori error estimator, we specify the error indicators in an adaptive mesh  refinement algorithm. 
Afterwards, an approximation of the control problem is explained and finally, we close the section by offering Howard's and adaptive algorithms, which are used in the implementation.
\subsection{Discretization of recursive problem}
\label{sec:error-analysis}  
Let the simplicial mesh $\mesh T $ and the Galerkin finite element spaces 
${\fespace u}, \tilde{\fespace u}, {\fespace g}$
be as introduced in \S \ref{subsec:finite-element-discrete}.
For any fixed $\theta \in [0,1]$ and control map $q \in \mathcal{Q}$ %
we associate a linear operator $ \linop M_\theta^{q}$ acting on 
$(\varphi, \vec \psi) \in \sobh1(\W) \times \sobh1(\W; \R d)$ as follows 
\begin{equation}
  \linop M_\theta^{q}(\varphi,\vec\psi):= 
  \mat A^{q} \frobinner \D \vec \psi 
  + 
  \vec b ^{q}
  \inner
  (
  \theta\vec \psi +(1-\theta) \nabla \varphi
  )
  -
  c^{q} \varphi.
\end{equation}
Furthermore, we define the set-valued operator 
$\tilde{\mathcal N}: \sobh1(\W) \times \sobh1(\W; \R d) \rightrightarrows \mathcal{Q}$ by
\begin{equation}
  \label{def:N-tild-set}
  \tilde{\mathcal{N}}(\varphi, \vec \psi)
  :=
  \left\lbrace 
  q \in \mathcal{Q}
  \left\vert ~
  q(\vec x) \in  
  \underset{
    \substack{\alpha \in \mathcal{A}
  }}
  \Argmax
  \evalat{
    \linop M_\theta^{\alpha}(\varphi,\vec\psi) - f^\alpha
  }{\vec x},
  \text{ for a.e. }  \vec x \text{ in } \W
  \right\rbrace \right.
  ,
\end{equation} 
where $\theta$ is implicit in the notation
Recall the recursive problem~(\ref{eq:recursive-practical-a-e}) and let $({\fe u_{\fespace u}}_n, {\vecfe g_{\fespace g}}_n)$ be the least-squares 
Galerkin with gradient recovery finite element approximation of it %
at step $n$, approximated by setting 
$q_{n-1} \in \tilde{\mathcal{N}}({\fe u_{\fespace u}}_{n-1}, {\vecfe g_{\fespace g}}_{n-1})$. 
Indeed, $({\fe u_{\fespace u}}_n,{\vecfe g_{\fespace g}}_n)$ is the solution of (\ref{eq:discrete}) 
for $ \linop M_{\theta} = \linop M^{q_{n-1}}_{\theta} $ and $f = f^{q_{n-1}}$. 
Let 
\begin{equation}
  \label{def:limit-approximation}
  (\fe u_{\fespace u}, \vecfe g_{\fespace g})=\underset{\substack{n\rightarrow \infty}}
  \lim({\fe u_{\fespace u}}_n, {\vecfe g_{\fespace g}}_n).
\end{equation} 

We analyze errors by examining the limiting behavior of both the exact solution and the approximation of the recursive problem (\ref{eq:recursive-practical-a-e}). To report error bounds, we will consider an equivalent problem to \eqref{eq:HJB-homogeneous}, where $(u, \nabla u)$ represents its exact solution, and $(u_{\fespace u}, \vec g_{\fespace g})$ represents its approximation. In this regard, it is appropriate to consider a mixed formulation of the HJB problem. Corresponding to the approach used to deal with the linear equation at each iteration, determining such a nonlinear form of the equation is straightforward.
\subsection{Mixed formulation of HJB problem}
To introduce the mixed formulation corresponding to 
HJB problem (\ref{eq:HJB-homogeneous}),
we define the mixed HJB operator as following
\begin{equation}
  \begin{gathered}
    \funk{ \hat{\nlop F}}{\sobh1(\W) \times \sobh1(\W; \R d)
    }\leb2(\W)
    \\
    \begin{aligned}
      \hat{\nlop F}(\varphi, \vec\psi): = 	
      \linop{M}_\theta^q(\varphi, \vec\psi)  -f^q 
      \text{ such that } 
      q \in \tilde{\mathcal{N}}(\varphi, \vec \psi),
    \end{aligned}
  \end{gathered}
\end{equation}
where similar to (\ref{def:N-tild-set}), $\theta$ is muted in the notation.
Through adapting the proof of Theorem~\ref{the:semi-smooth:HJB-op} 
to operator $\hat{\nlop F}$, we argue that for any 
$(\varphi, \vec \psi) \in \sobh1(\W) \times \sobh1(\W; \R d)$
\begin{equation}
  \label{def:newton-derivative-mixed-HJB}
  \DNewton {\hat{\nlop F}}[(\varphi, \vec \psi)]:=
  \left\lbrace 		
  \linop M_\theta^{q} := 
  \mat A^{q} \frobinner \D ~ %
  + 
  \vec b ^{q}
  \inner
  (
  \theta
  +(1-\theta) \nabla ~ %
  )
  -
  c^{q} %
  \left\vert ~
  q \in \tilde{\mathcal{N}}(\varphi, \vec \psi)
  \right\rbrace \right.
\end{equation} 
is the Newton derivative of $\hat{\nlop F}$ at $(\varphi, \vec \psi)$.
\newline
Based on the least-squares idea applied for the linearized problem, we introduce the following quadratic functional
\begin{equation}
  \begin{gathered}
    \funk{ \hat{E_\theta}}{\sobhz1(\W) \times \sobh1(\W; \R d)
    }\reals
    \\
    \begin{aligned}
      \hat{E_\theta} (\varphi,\vec\psi)
      := 
      \Norm{ \grad\varphi-\vec\psi}_{\leb2(\W)} ^2
      +
      \Norm{ \rot \vec \psi }_{\leb2(\W)} ^2 
      +
      \Norm{ \tangentialto\W( \vec \psi) }_{\leb2(\boundary\W)}^2
      +
      \Norm{ \linop{M}_\theta^q(\varphi, \vec\psi)  -f^q } _{\leb2(\W)}^2
    \end{aligned}
  \end{gathered} 
\end{equation} 
in which $q \in  \tilde{\mathcal{N}}(\varphi, \vec \psi)$. 
We then consider the minimization problem of finding 
$(u, \vec g) \in \sobhz1(\W) \times \sobh1(\W; \R d)$ 
such that 
\begin{equation}
  \label{eq:minimization-mixed}
  (u, \vec g)
  =\underset
  {\substack{
      (\varphi , \vec\psi) \in\sobhz1(\W) \times \sobh1(\W; \R d)
  }}
  \argmin \hat{E_\theta}(\varphi ,\vec\psi) .
\end{equation} 
Obviously, $\vec g = \grad u$ and $u$ is the unique strong solution of (\ref{eq:HJB-homogeneous}).
By applying Newton derivative (\ref{def:newton-derivative-mixed-HJB}), 
the Euler-Lagrange equation of the minimization problem (\ref{eq:minimization-mixed}) 
finds $ (u, \vec g) \in \sobhz1(\W) \times \sobh1(\W; \R d)$ such that satisfies the binonlinear problem
\begin{multline}
  \label{eq:mixed-Eu-La}
  \ltwop{
    \grad u-\vec g
  }{
    \grad \varphi-\vec\psi
  }
  +
  \ltwop{
    \rot\vec g
  }{
    \rot\vec\psi
  }
  +
  \qa{
    \tangentialto\W \vec g
    ,
    \tangentialto\W \vec \psi 
  }_{\boundary\W}
  \\
  +
  \ltwop{
    \linop{M}_\theta^q( u ,\vec g ) - f^q
  }{
    \linop{M}_\theta^q(\varphi,\vec\psi)
  }   
  =0,
  \\ 
  \text{ for }
  q \in \tilde{\mathcal{N}}(u, \vec g)
  \text{ and each }  
  (\varphi, \vec\psi) \in \sobhz1(\W) \times \sobh1(\W; \R d).
\end{multline}
Respectively, the discrete nonlinear problem finds $(\fe u_{\fespace u}, \vecfe g_{\fespace g}) \in \fespace u \times \fespace g$ 
such that
\begin{multline}
  \label{eq:mixed-Eu-La-discrete}
  \ltwop{
    \grad \fe u_{\fespace u} - \vecfe g_{\fespace g}
  }{
    \grad \varphi-\vec\psi
  }
  +
  \ltwop{
    \rot \vecfe g_{\fespace g}
  }{
    \rot\vec\psi
  }
  +
  \qa{
    \tangentialto\W \vecfe g_{\fespace g}
    ,
    \tangentialto\W \vec \psi 
  }_{\boundary\W}
  \\
  +
  \ltwop{
    \linop{M}_\theta^q( u_{\fespace u} ,\vecfe g_{\fespace g} ) - f^q
  }{
    \linop{M}_\theta^q(\varphi,\vec\psi)
  }   
  =0,
  \\ 
  \text{ for }
  q \in \tilde{\mathcal{N}}(u_{\fespace u}, \vecfe g_{\fespace g})
  \text{ and each }  
  (\varphi, \vec\psi) \in \fespace u \times \fespace g.
\end{multline}
To achieve linearity, applying the semismooth Newton linearization to \eqref{eq:mixed-Eu-La-discrete} yields a recursive bilinear form equation, as in \eqref{eq:discrete}.
Hence, the solutions of \eqref{eq:mixed-Eu-La-discrete} and \eqref{def:limit-approximation} coincide, and we use the same notation for them, as explained in the following remark. Consequently, to provide error bounds, we consider problem \eqref{eq:mixed-Eu-La} and its discrete version \eqref{eq:mixed-Eu-La-discrete}.
\begin{Obs}
  When discretizing the HJB equation using the Galerkin finite element method, the steps of discretization and linearization are computationally commutative. This means that whether we apply the Galerkin method first to enter a finite-dimensional function space and then perform linearization, or if we linearize first and then use finite element discretization, the outcomes are equivalent.
\end{Obs}

\subsection{Supremum property}
We recall that for real numbers 
$\left\lbrace x^{\alpha} \right\rbrace_{\alpha} , \left\lbrace y^{\alpha} \right\rbrace_{\alpha}$, 
we have 
\begin{equation}
  \label{ineq:sup-property}
  \norm{\sup_{\alpha} x^{\alpha}  -  \sup_{\alpha} y^{\alpha}} 
  \leq
  \sup_{\alpha}
  \norm{x^{\alpha} - y^{\alpha}}.
\end{equation}
This property is used to show the following claims.
\begin{Lem}[weak monotonicity and Lipschitz continuity]
  \label{lem:semi-monotonicity-continuity}
  Let $\W$ be a convex domain. There exist 
  $C_{\ref{ineq:smi-monotonicity}} , C_{\ref{ineq:continuity}} > 0 $ 
  such that every 
  $(\varphi , \vec \psi), (\varphi' , \vec \psi'), (\varphi'' , \vec \psi'') , (\varphi''', \vec \psi''') 
  \in \sobhz1(\W) \times \sobh1(\W; \R d)$ 
  and $q \in \tilde{\mathcal{N}}(\varphi , \vec \psi) , q' \in \tilde{\mathcal{N}}(\varphi' , \vec \psi') , 
  q'' \in \tilde{\mathcal{N}}(\varphi'' , \vec \psi'') , q''' \in \tilde{\mathcal{N}}(\varphi''' , \vec \psi''')$ 
  satisfy
  \begin{multline}
    \label{ineq:smi-monotonicity}
    \Norm{ \grad(\varphi - \varphi')-(\vec\psi - \vec\psi')}_{\leb2(\W)} ^2
    +
    \Norm{ \rot (\vec \psi - \vec\psi') }_{\leb2(\W)} ^2 
    +
    \Norm{ \tangentialto\W( \vec \psi - \vec\psi') }_{\leb2(\boundary\W)}^2
    \\
    +
    \Norm{
      \qp{\linop{M}_\theta^{q}(\varphi , \vec \psi) - f^q}
      -
      \qp{\linop{M}_\theta^{q'}(\varphi' , \vec \psi') - f^{q'}}
    }_{\leb2(\W)}^2
    \geq C_{\ref{ineq:smi-monotonicity}}
    \Norm{(\varphi , \vec \psi) - (\varphi' , \vec \psi')}_{\sobh1(\W)}^2.
  \end{multline}
  \begin{multline}
    \label{ineq:continuity}
    \ltwop{
      \grad (\varphi - \varphi')- (\vec \psi - \vec \psi')
    }{
      \grad (\varphi'' - \varphi''')- (\vec\psi'' - \vec \psi''')
    }
    \\
    +
    \ltwop{
      \rot (\vec \psi - \vec \psi')
    }{
      \rot(\vec \psi'' - \vec \psi''')
    }
    +
    \qa{
      \tangentialto\W (\vec \psi - \vec \psi')
      ,
      \tangentialto\W (\vec \psi'' - \vec \psi''') 
    }_{\boundary\W}
    \\
    +
    \ltwop{
      \qp{\linop{M}_\theta^q( \varphi ,\vec \psi ) - f^q}
      -
      \qp{\linop{M}_\theta^{q'}( \varphi' ,\vec \psi' ) - f^{q'}}
    }{
      \qp{\linop{M}_\theta^{q''}( \varphi'' ,\vec \psi'' ) - f^{q''}}
      -
      \qp{\linop{M}_\theta^{q'''}( \varphi''' ,\vec \psi''' ) - f^{q'''}}
    }   
    \\
    \leq
    C_{\ref{ineq:continuity}}
    \Norm{\qp{ \varphi ,\vec \psi} - \qp{ \varphi' ,\vec \psi'}}_{\sobh1(\W)}
    \Norm{\qp{ \varphi'' ,\vec \psi''} - \qp{ \varphi''' ,\vec \psi'''}}_{\sobh1(\W)}.
  \end{multline} 
\end{Lem}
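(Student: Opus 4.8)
The plan is to deduce both estimates from the linear theory of \S\ref{sec:Variational-linear}, via the same pointwise \emph{bracketing} device used in the proof of Theorem~\ref{the:semi-smooth:HJB-op}. Write $w:=\varphi-\varphi'$, $\vec\phi:=\vec\psi-\vec\psi'$, $\bar w:=\varphi''-\varphi'''$, $\bar{\vec\phi}:=\vec\psi''-\vec\psi'''$, and
\[
  R:=\qp{\linop M_\theta^q(\varphi,\vec\psi)-f^q}-\qp{\linop M_\theta^{q'}(\varphi',\vec\psi')-f^{q'}},
  \qquad
  \bar R:=\qp{\linop M_\theta^{q''}(\varphi'',\vec\psi'')-f^{q''}}-\qp{\linop M_\theta^{q'''}(\varphi''',\vec\psi''')-f^{q'''}}.
\]
Since $\linop M_\theta^\alpha$ is linear in $(\varphi,\vec\psi)$ and $f^\alpha$ does not depend on $(\varphi,\vec\psi)$, we have $\qp{\linop M_\theta^\alpha(\varphi,\vec\psi)-f^\alpha}-\qp{\linop M_\theta^\alpha(\varphi',\vec\psi')-f^\alpha}=\linop M_\theta^\alpha(w,\vec\phi)$; using that $q(\vec x)$ and $q'(\vec x)$ maximise, for a.e.\ $\vec x$, the maps $\alpha\mapsto\qp{\linop M_\theta^\alpha(\varphi,\vec\psi)-f^\alpha}(\vec x)$ and $\alpha\mapsto\qp{\linop M_\theta^\alpha(\varphi',\vec\psi')-f^\alpha}(\vec x)$ respectively, the computation leading to~\eqref{ineq:smaller:zero} gives the pointwise bracketing $\linop M_\theta^{q'}(w,\vec\phi)\leq R\leq\linop M_\theta^{q}(w,\vec\phi)$ a.e.\ in $\W$, and analogously for $\bar R$.

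I would first dispatch the continuity estimate~\eqref{ineq:continuity}, which is routine. Expand its left-hand side into the four pairings $\ltwop{\grad w-\vec\phi}{\grad\bar w-\bar{\vec\phi}}+\ltwop{\rot\vec\phi}{\rot\bar{\vec\phi}}+\ltwop{\tangentialto\W\vec\phi}{\tangentialto\W\bar{\vec\phi}}_{\boundary\W}+\ltwop{R}{\bar R}$, apply Cauchy--Schwarz to each term and then to the four-term sum, so that the left-hand side is bounded by the product of the square roots of $\Norm{\grad w-\vec\phi}_{\leb2(\W)}^2+\Norm{\rot\vec\phi}_{\leb2(\W)}^2+\Norm{\tangentialto\W\vec\phi}_{\leb2(\boundary\W)}^2+\Norm R_{\leb2(\W)}^2$ and of its barred analogue. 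The first three summands are $\leq C\Norm{(w,\vec\phi)}_{\sobh1(\W)}^2$ by the trace inequality on the convex (hence Lipschitz) domain $\W$, while from the supremum property~\eqref{ineq:sup-property} and the bracketing one has $\abs R\leq\sup_\alpha\abs{\linop M_\theta^\alpha(w,\vec\phi)}$ a.e., and the uniform $\leb\infty(\W;\cont0(\mathcal A;X))$ bounds on $\mat A,\vec b,c$ give $\abs{\linop M_\theta^\alpha(w,\vec\phi)}\leq C\qp{\abs{\D\vec\phi}+\abs{\vec\phi}+\abs{\grad w}+\abs w}$ with $C$ independent of $\alpha$, hence $\Norm R_{\leb2(\W)}\leq C\Norm{(w,\vec\phi)}_{\sobh1(\W)}$. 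Since $\Norm{(\bar w,\bar{\vec\phi})}_{\sobh1(\W)}=\Norm{(\varphi'',\vec\psi'')-(\varphi''',\vec\psi''')}_{\sobh1(\W)}$, this is~\eqref{ineq:continuity}.

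The weak monotonicity~\eqref{ineq:smi-monotonicity} is the substantive part: the aim is to rerun the coercivity argument of Theorem~\ref{the:Coercivity-continuity} with $R$ in the slot occupied there by $\linop M_\theta(w,\vec\phi)$. That argument sees the middle operator only through the pointwise Cordes bound $\abs{\linop M_\theta^\alpha(w,\vec\phi)-D_\lambda(w,\vec\phi)}\leq\sqrt{1-\varepsilon}\,\qp{\abs{\D\vec\phi}^2+2\lambda\abs{\theta\vec\phi+(1-\theta)\grad w}^2+\lambda^2 w^2}^{1/2}$, a consequence of~\eqref{def:general-Cordes-condition} (resp.~\eqref{def:special-Cordes-condition} when $\lambda=0$), which holds uniformly in $\alpha\in\mathcal A$, with $D_\lambda(w,\vec\phi)=\div\vec\phi-\lambda w$ as in Theorem~\ref{the:modified-M-T}; crucially $D_\lambda$ is control-independent, so one reference operator serves $q$ and $q'$ at once, a fact underpinned by the uniform-in-$\alpha$ hypotheses~\eqref{def:uniformly-elliptic} and Cordes. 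Applying that bound to both $q$ and $q'$ and invoking the bracketing $\linop M_\theta^{q'}(w,\vec\phi)\leq R\leq\linop M_\theta^{q}(w,\vec\phi)$, the identical inequality holds with $R$ replacing $\linop M_\theta^\alpha(w,\vec\phi)$, so $\Norm{R-D_\lambda(w,\vec\phi)}_{\leb2(\W)}^2\leq(1-\varepsilon)\,\norm{(w,\vec\phi)}_{\lambda,\theta}^2$. Then $\Norm{D_\lambda(w,\vec\phi)}_{\leb2(\W)}$ is controlled by $\Norm R_{\leb2(\W)}$ up to a multiple of $\sqrt{1-\varepsilon}\,\norm{(w,\vec\phi)}_{\lambda,\theta}$, and feeding this into Lemma~\ref{lem-M-T-estimate} and the modified Miranda--Talenti estimate of Theorem~\ref{the:modified-M-T} exactly as in the proof of Theorem~\ref{the:Coercivity-continuity} yields $\Norm{\grad w-\vec\phi}_{\leb2(\W)}^2+\Norm{\rot\vec\phi}_{\leb2(\W)}^2+\Norm{\tangentialto\W\vec\phi}_{\leb2(\boundary\W)}^2+\Norm R_{\leb2(\W)}^2\geq C_{\ref{ineq:smi-monotonicity}}\Norm{(w,\vec\phi)}_{\sobh1(\W)}^2$.

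The hard part is the constant bookkeeping in this last transfer: the Young parameter used to pass from $\Norm{D_\lambda(w,\vec\phi)}_{\leb2(\W)}$-control to $\Norm R_{\leb2(\W)}$-control must be tuned jointly with the free parameter $0<\rho<1/\qp{\maxi{C_{\ref{ineq:M-T-estimate}}}1}$ of Theorem~\ref{the:modified-M-T} so that the coefficient of $\norm{(w,\vec\phi)}_{\lambda,\theta}^2$ that lands on the right-hand side stays strictly below the coefficient on the left, leaving a positive surplus; this is precisely where the strict slack $\varepsilon>0$ of the Cordes condition is spent, mirroring its role in Theorem~\ref{the:Coercivity-continuity}. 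Once a positive $C_{\ref{ineq:smi-monotonicity}}$ is extracted, the proof is finished, since the bracketing was valid for \emph{any} admissible $q\in\tilde{\mathcal N}(\varphi,\vec\psi)$, $q'\in\tilde{\mathcal N}(\varphi',\vec\psi')$.
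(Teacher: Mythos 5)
Your proof is correct and is essentially the paper's own argument spelled out: the paper's proof consists precisely of rerunning the coercivity and continuity estimates of Theorem~\ref{the:Coercivity-continuity} with the supremum property (\ref{ineq:sup-property}), and your pointwise bracketing $\linop M_\theta^{q'}(w,\vec\phi)\leq R\leq \linop M_\theta^{q}(w,\vec\phi)$ is just a two-sided restatement of that property. The subsequent use of the Cordes bound together with Lemma~\ref{lem-M-T-estimate} and Theorem~\ref{the:modified-M-T} for (\ref{ineq:smi-monotonicity}), and the Cauchy--Schwarz, trace and uniform-coefficient-bound argument for (\ref{ineq:continuity}), follow the same route the paper intends.
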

\begin{Proof}
  By tracking what is saying in the proof of Theorem \ref{the:Coercivity-continuity} 
  and using (\ref{ineq:sup-property}), the inequalities are achieved. 
\end{Proof}
\begin{Obs}[non strong monotonicity]
  We want to emphasize that (\ref{ineq:smi-monotonicity}) differs from the strong monotonicity of the binonlinear form in Equation (\ref{eq:mixed-Eu-La}). It can be verified that the corresponding binonlinear form is not strongly monotone.
\end{Obs}
\begin{The}[quasi-optimality]
  \label{the:quasi-optimality}
  Let $(\fe u_{\fespace u}, \vecfe g_{\fespace g})$ be the solution of the
  discrete problem (\ref{eq:mixed-Eu-La-discrete}). It satisfies the error bound
  \begin{equation}
    \label{ineq:quasi-optimality}
    \Norm{(u,\grad u) - (\fe u_{\fespace u}, \vecfe g_{\fespace g})}_{\sobh1(\W)} 
    \leq
    \frac{C_{\ref{ineq:continuity}}}{C_{\ref{ineq:smi-monotonicity}}}
    \inf_{(\varphi, \vec \psi) \in \fespace u \times \fespace g}
    \Norm{(u,\grad u) - (\varphi, \vec \psi)}_{\sobh1(\W)} .
  \end{equation}
\end{The}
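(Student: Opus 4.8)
The plan is to prove~(\ref{ineq:quasi-optimality}) by a Céa/Strang-type argument in the spirit of the linear-case error analysis of \cite{LakkisMousavi:21:article:A-least-squares}, with the weak monotonicity~(\ref{ineq:smi-monotonicity}) playing the role of coercivity and the Lipschitz estimate~(\ref{ineq:continuity}) that of continuity. For a pair $(w,\vec z)\in\sobhz1(\W)\times\sobh1(\W;\R d)$ I would write $R(w,\vec z):=\bigl(\grad w-\vec z,\ \rot\vec z,\ \tangentialto\W\vec z,\ \linop M_\theta^{q}(w,\vec z)-f^{q}\bigr)$ with $q\in\tilde{\mathcal{N}}(w,\vec z)$, viewed as an element of the product space $\leb2(\W;\R d)\times\leb2(\W)\times\leb2(\boundary\W;\R d)\times\leb2(\W)$ with its natural inner product $\langle\cdot,\cdot\rangle$; then Lemma~\ref{lem:semi-monotonicity-continuity} reads $C_{\ref{ineq:smi-monotonicity}}\Norm{(w,\vec z)-(w',\vec z')}_{\sobh1(\W)}^2\leq\Norm{R(w,\vec z)-R(w',\vec z')}^2$, while~(\ref{ineq:continuity}) bounds the inner product of two such residual differences by $C_{\ref{ineq:continuity}}$ times the product of the corresponding $\sobh1(\W)$-distances. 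Note also that, since $\vec g=\grad u$, $\rot\grad u=0$, $u\in\sobhz1(\W)$ so $\tangentialto\W\grad u=0$, and $\linop M_\theta^{q}(u,\vec g)-f^{q}=\nlop F[u]=0$ for $q\in\mathcal{N}[u]$, one has $R(u,\grad u)=0$; in particular the continuous Euler--Lagrange identity~(\ref{eq:mixed-Eu-La}) holds trivially.

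With $(e_u,\vec e_g):=(u,\grad u)-(\fe u_{\fespace u},\vecfe g_{\fespace g})$ and an arbitrary discrete pair $(\varphi,\vec\psi)\in\fespace u\times\fespace g$, I would start from $C_{\ref{ineq:smi-monotonicity}}\Norm{(e_u,\vec e_g)}_{\sobh1(\W)}^2\leq\Norm{R(u,\grad u)-R(\fe u_{\fespace u},\vecfe g_{\fespace g})}^2$, insert $R(\varphi,\vec\psi)$, and split the right-hand side into $\langle R(u,\grad u)-R(\fe u_{\fespace u},\vecfe g_{\fespace g}),\,R(u,\grad u)-R(\varphi,\vec\psi)\rangle$, which is $\leq C_{\ref{ineq:continuity}}\Norm{(e_u,\vec e_g)}_{\sobh1(\W)}\Norm{(u,\grad u)-(\varphi,\vec\psi)}_{\sobh1(\W)}$ by~(\ref{ineq:continuity}), plus the ``orthogonality'' term $\langle R(u,\grad u)-R(\fe u_{\fespace u},\vecfe g_{\fespace g}),\,R(\varphi,\vec\psi)-R(\fe u_{\fespace u},\vecfe g_{\fespace g})\rangle$. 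Using $R(u,\grad u)=0$, the latter equals $-\langle R(\fe u_{\fespace u},\vecfe g_{\fespace g}),\,R(\varphi,\vec\psi)-R(\fe u_{\fespace u},\vecfe g_{\fespace g})\rangle$, which I would attack with the discrete equation~(\ref{eq:mixed-Eu-La-discrete}): the latter says precisely that $R(\fe u_{\fespace u},\vecfe g_{\fespace g})$, tested against the $\linop M_\theta^{q_h}$-residual tuple of \emph{any} discrete function, vanishes, where $q_h\in\tilde{\mathcal{N}}(\fe u_{\fespace u},\vecfe g_{\fespace g})$. Dividing by $\Norm{(e_u,\vec e_g)}_{\sobh1(\W)}$ and taking the infimum over $(\varphi,\vec\psi)\in\fespace u\times\fespace g$ then delivers~(\ref{ineq:quasi-optimality}).

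The technical crux, and what I expect to be the main obstacle, is exactly this last step: because the control $q$ ranges over $\tilde{\mathcal{N}}(\cdot)$ and therefore depends on the point at which the binonlinear form is evaluated, the fourth component of $R(\varphi,\vec\psi)-R(\fe u_{\fespace u},\vecfe g_{\fespace g})$ is \emph{not} exactly $\linop M_\theta^{q_h}$ applied to the discrete difference, so~(\ref{eq:mixed-Eu-La-discrete}) does not kill the orthogonality term outright but leaves a defect built from the HJB residuals $\hat{\nlop F}(\varphi,\vec\psi)$ and $\linop M_\theta^{q_h}(\fe u_{\fespace u},\vecfe g_{\fespace g})-f^{q_h}$. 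I would control this defect with the supremum property~(\ref{ineq:sup-property}) together with $\nlop F[u]=0$: pointwise a.e.\ one has $\bigl|\hat{\nlop F}(\fe u_{\fespace u},\vecfe g_{\fespace g})\bigr|=\bigl|\sup_{\alpha}\qp{\linop M_\theta^{\alpha}(\fe u_{\fespace u},\vecfe g_{\fespace g})-f^{\alpha}}-\sup_{\alpha}\qp{\linop M_\theta^{\alpha}(u,\grad u)-f^{\alpha}}\bigr|\leq\sup_{\alpha}\bigl|\linop M_\theta^{\alpha}(e_u,\vec e_g)\bigr|$, and by the uniform ellipticity~(\ref{def:uniformly-elliptic}) and the $\leb\infty(\W;\cont0(\mathcal A))$ bounds on $\mat A,\vec b,c$ the right-hand side is dominated in $\leb2(\W)$ by $\Norm{(e_u,\vec e_g)}_{\sobh1(\W)}$ (and similarly $\linop L^{q_h}u-f^{q_h}$, which is moreover $\leq0$ a.e.\ by the HJB inequality). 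A Cauchy--Schwarz step then turns the defect into a term that is either absorbed into the left-hand side or bounded by $C\Norm{(e_u,\vec e_g)}_{\sobh1(\W)}\Norm{(u,\grad u)-(\varphi,\vec\psi)}_{\sobh1(\W)}$, after which the constants collapse to the announced ratio $C_{\ref{ineq:continuity}}/C_{\ref{ineq:smi-monotonicity}}$; the remaining ingredients (uniform boundedness of $\linop M_\theta^{q}$ in $q$, the triangle inequality) are routine and I would simply quote them.
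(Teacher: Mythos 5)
You have located the real difficulty correctly, but the way you propose to dispose of it is exactly the step that fails. Writing, as you do, $(e_u,\vec e_g):=(u,\grad u)-(\fe u_{\fespace u},\vecfe g_{\fespace g})$, and using $R(u,\grad u)=0$ together with \eqref{eq:mixed-Eu-La-discrete} tested with $(\fe u_{\fespace u}-\varphi,\vecfe g_{\fespace g}-\vec\psi)$, the first three components of your ``orthogonality'' term cancel and what survives of it is precisely $-\ltwop{\linop M_\theta^{q_h}(\fe u_{\fespace u},\vecfe g_{\fespace g})-f^{q_h}}{G}$ with $G:=\qp{\linop M_\theta^{q_\varphi}(\varphi,\vec\psi)-f^{q_\varphi}}-\qp{\linop M_\theta^{q_h}(\varphi,\vec\psi)-f^{q_h}}\geq0$, where $q_\varphi\in\tilde{\mathcal N}(\varphi,\vec\psi)$ and $q_h\in\tilde{\mathcal N}(\fe u_{\fespace u},\vecfe g_{\fespace g})$. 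The supremum property \eqref{ineq:sup-property} controls $G$ only through the distance of $(\varphi,\vec\psi)$ to the \emph{discrete} pair, because $q_h$ is an argmax for $(\fe u_{\fespace u},\vecfe g_{\fespace g})$ and not for $(\varphi,\vec\psi)$: from $G=\qb{\hat{\nlop F}(\varphi,\vec\psi)-\hat{\nlop F}(\fe u_{\fespace u},\vecfe g_{\fespace g})}-\linop M_\theta^{q_h}(\varphi-\fe u_{\fespace u},\vec\psi-\vecfe g_{\fespace g})$ one gets $\Norm{G}_{\leb2(\W)}\lesssim\Norm{(\varphi,\vec\psi)-(\fe u_{\fespace u},\vecfe g_{\fespace g})}_{\sobh1(\W)}\leq\Norm{(u,\grad u)-(\varphi,\vec\psi)}_{\sobh1(\W)}+\Norm{(e_u,\vec e_g)}_{\sobh1(\W)}$, and this is sharp: taking $(\varphi,\vec\psi)$ to be the best approximation, $G$ is essentially $f^{q_h}-\linop L^{q_h}u\geq0$, which measures the suboptimality of $q_h$ at $u$ and is of size $\Norm{(e_u,\vec e_g)}_{\sobh1(\W)}$, not of size $\Norm{(u,\grad u)-(\varphi,\vec\psi)}_{\sobh1(\W)}$. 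Combined with $\Norm{\hat{\nlop F}(\fe u_{\fespace u},\vecfe g_{\fespace g})}_{\leb2(\W)}\lesssim\Norm{(e_u,\vec e_g)}_{\sobh1(\W)}$, your defect is only bounded by $C\Norm{(e_u,\vec e_g)}_{\sobh1(\W)}\qp{\Norm{(u,\grad u)-(\varphi,\vec\psi)}_{\sobh1(\W)}+\Norm{(e_u,\vec e_g)}_{\sobh1(\W)}}$; the quadratic piece $C\Norm{(e_u,\vec e_g)}^2_{\sobh1(\W)}$ can be absorbed into $C_{\ref{ineq:smi-monotonicity}}\Norm{(e_u,\vec e_g)}^2_{\sobh1(\W)}$ only if $C<C_{\ref{ineq:smi-monotonicity}}$, which nothing guarantees ($C$ is built from the $\leb\infty$ bounds of $\mat A,\vec b,c$), and even then the advertised constant $C_{\ref{ineq:continuity}}/C_{\ref{ineq:smi-monotonicity}}$ would be lost.

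The paper's proof never manufactures this defect, because it never introduces the residual of $(\varphi,\vec\psi)$ with its own control. From \eqref{ineq:smi-monotonicity} applied to $(u,\grad u)$ and $(\fe u_{\fespace u},\vecfe g_{\fespace g})$, and using $\linop M_\theta^{q}(u,\grad u)-f^{q}=0$, $\rot\grad u=0$, $\tangentialto\W\grad u=0$, one bounds $C_{\ref{ineq:smi-monotonicity}}\Norm{(e_u,\vec e_g)}^2_{\sobh1(\W)}$ by the full squared residual of the discrete pair; then \eqref{eq:mixed-Eu-La-discrete}, tested with $(\fe u_{\fespace u}-\varphi,\vecfe g_{\fespace g}-\vec\psi)$ and using the linearity of $\linop M_\theta^{q_h}$ (the $f^{q_h}$ contributions cancel), converts that squared residual \emph{exactly} into the pairing with the tuple $\qp{\grad\varphi-\vec\psi,\ \rot\vec\psi,\ \tangentialto\W\vec\psi,\ \linop M_\theta^{q_h}(\varphi,\vec\psi)-f^{q_h}}$, i.e.\ with the same control $q_h$ kept in the fourth slot; rewriting everything as differences from $(u,\grad u)$ and invoking \eqref{ineq:continuity} once then gives $C_{\ref{ineq:smi-monotonicity}}\Norm{(e_u,\vec e_g)}^2_{\sobh1(\W)}\leq C_{\ref{ineq:continuity}}\Norm{(e_u,\vec e_g)}_{\sobh1(\W)}\Norm{(u,\grad u)-(\varphi,\vec\psi)}_{\sobh1(\W)}$, and the theorem follows on dividing and taking the infimum. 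Your instinct that the control-dependence of the fourth component is the delicate point is sound --- even in the paper's arrangement the continuity estimate is applied with $q_h$ attached to $(\varphi,\vec\psi)$ rather than a member of $\tilde{\mathcal N}(\varphi,\vec\psi)$ --- but that ordering confines the issue to a single application of Lemma \ref{lem:semi-monotonicity-continuity} and leaves no remainder to absorb, whereas your splitting produces a remainder you cannot control. To repair your argument, drop $R(\varphi,\vec\psi)$ and follow the paper's route.
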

\begin{Proof}
  Consider any arbitrary $(\varphi, \vec \psi) \in \fespace u \times \fespace g$. 
  Since for $q \in \tilde{\mathcal{N}}(u , \grad u)$, 
  $\linop{M}_\theta^{q}(u , \grad u) - f^{q} = 0$, 
  Lemma \ref{lem:semi-monotonicity-continuity} 
  and (\ref{eq:mixed-Eu-La-discrete}) imply that for 
  $q' \in \tilde{\mathcal{N}}(\fe u_{\fespace u}, \vecfe g_{\fespace g})$ we have
  \begin{multline}
    \label{ineq-proof-quasi-opt}
    C_{\ref{ineq:smi-monotonicity}}
    \Norm{(u,\grad u) - (\fe u_{\fespace u}, \vecfe g_{\fespace g})}_{\sobh1(\W)}^2 
    \\
    \leq
    \Norm{ \grad \fe u_{\fespace u} - \vecfe g_{\fespace g}}_{\leb2(\W)} ^2
    +
    \Norm{ \rot \vecfe g_{\fespace g} }_{\leb2(\W)} ^2 
    +
    \Norm{ \tangentialto\W \vecfe g_{\fespace g} }_{\leb2(\boundary\W)}^2
    +
    \Norm{ 
      \qp{ \linop{M}_{\theta}^{q'}(\fe u_{\fespace u} , \vecfe g_{\fespace g})  - f^{q'} }
    } _{\leb2(\W)}^2
    \\
    =
    \ltwop{
      \grad \fe u_{\fespace u} - \vecfe g_{\fespace g}
    }{
      \grad \varphi-\vec\psi
    }
    +
    \ltwop{
      \rot \vecfe g_{\fespace g}
    }{
      \rot\vec\psi
    }
    +
    \qa{
      \tangentialto\W \vecfe g_{\fespace g}
      ,
      \tangentialto\W \vec \psi 
    }_{\boundary\W}
    \\
    +
    \ltwop{
      \linop{M}_\theta^{q'}( u_{\fespace u} ,\vecfe g_{\fespace g} ) - f^{q'}
    }{
      \linop{M}_\theta^{q'}(\varphi,\vec\psi) - f^{q'}
    }   	
    \\
    = 
    \ltwop{
      \grad\qp{ u -  \fe u_{\fespace u}} - \qp{ \grad u - \vecfe g_{\fespace g}}
    }{
      \grad (u- \varphi) - ( \grad u -\vec\psi)
    }
    +
    \ltwop{
      \rot \qp{ \grad u - \vecfe  g_{\fespace g}}
    }{
      \rot \qp{\grad u - \vec\psi}
    }
    \\
    +
    \qa{
      \tangentialto\W \qp{ \grad u - \vecfe g_{\fespace g} }
      ,
      \tangentialto\W \qp{ \grad u - \vec \psi }
    }_{\boundary\W}
    \\
    +
    \ltwop{
      \linop{M}_\theta^{q}( u ,\grad u ) - f^{q}
      -
      \qp	{\linop{M}_\theta^{q'}( u_{\fespace u} ,\vecfe g_{\fespace g} ) - f^{q'} }
    }{
      \linop{M}_\theta^{q}( u ,\grad u ) - f^{q}
      -
      \qp{	\linop{M}_\theta^{q'}(\varphi,\vec\psi) - f^{q'} }
    }   
    \\
    \leq 
    C_{\ref{ineq:continuity}}
    \Norm{(u, \grad u) - (u_{\fespace u}, \vecfe g_{\fespace g}) }_{\sobh1(\W)}
    \Norm{(u, \grad u) - (\varphi,\vec\psi) }_{\sobh1(\W)}
  \end{multline}

\end{Proof}
\begin{Pro}[\apriori error estimate for HJB equation]
  \label{pro:convergence-rate-HJB}
  Suppose that the strong solution $u$ of 
  (\ref{eq:HJB-homogeneous}) satisfies $u \in \sobh{\rho+2}(\W)$,
  for some real $\rho > 0$. Then for some 
  $\constref{eqn:convergence-rate-HJB}>0$, independent 
  of $u$ and $h$ we have
  \begin{equation}
    \label{eqn:convergence-rate-HJB}
    \Norm{(u,\grad u) - (\fe u_{\fespace u}, \vecfe g_{\fespace g})}_{\sobh1(\W)} 
    \leq
    \constref{eqn:convergence-rate-HJB} h^{\mini{k}\rho}
    \Norm{u}_{\sobh{\rho+2}(\W)}.
  \end{equation}
\end{Pro}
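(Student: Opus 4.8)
The plan is to reduce the estimate to the quasi-optimality bound of Theorem~\ref{the:quasi-optimality} and then invoke standard polynomial interpolation theory. Since $(\fe u_{\fespace u},\vecfe g_{\fespace g})$ solves the discrete mixed problem~(\ref{eq:mixed-Eu-La-discrete}), Theorem~\ref{the:quasi-optimality} gives
\[
\Norm{(u,\grad u)-(\fe u_{\fespace u},\vecfe g_{\fespace g})}_{\sobh1(\W)}
\leq
\frac{C_{\ref{ineq:continuity}}}{C_{\ref{ineq:smi-monotonicity}}}
\inf_{(\varphi,\vec\psi)\in\fespace u\times\fespace g}
\Norm{(u,\grad u)-(\varphi,\vec\psi)}_{\sobh1(\W)},
\]
with the crucial feature that the ratio $C_{\ref{ineq:continuity}}/C_{\ref{ineq:smi-monotonicity}}$ is independent of $h$ and of the control maps selected along the iteration --- this is exactly what Lemma~\ref{lem:semi-monotonicity-continuity}, built on the supremum property~(\ref{ineq:sup-property}), provides. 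It therefore suffices to produce one admissible pair $(\varphi,\vec\psi)\in\fespace u\times\fespace g$ that approximates $(u,\grad u)$ at the desired rate.

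First I would take $\varphi$ to be a Scott--Zhang quasi-interpolant of $u$ into $\fespace u=\poly{k}\qp{\mesh T}\meet\sobhz1(\W)$ and $\vec\psi$ the componentwise quasi-interpolant of $\grad u$ into $\fespace g$; both choices are admissible because the quasi-interpolant respects the homogeneous Dirichlet condition encoded in $\fespace u$ and the $\sobh1$-conformity of $\fespace g$. Since $u\in\sobh{\rho+2}(\W)$ we have $\grad u\in\sobh{\rho+1}(\W;\R d)$, so the usual interpolation estimates on shape-regular simplicial meshes yield
\[
\Norm{u-\varphi}_{\sobh1(\W)}\leq C\,h^{\mini{k}{(\rho+1)}}\Norm{u}_{\sobh{\rho+2}(\W)},
\qquad
\Norm{\grad u-\vec\psi}_{\sobh1(\W)}\leq C\,h^{\mini{k}\rho}\Norm{u}_{\sobh{\rho+2}(\W)}.
\]
Because $h\leq1$, the first contribution is dominated by the second; adding them and inserting the result into the quasi-optimality bound delivers~(\ref{eqn:convergence-rate-HJB}), with $\constref{eqn:convergence-rate-HJB}$ absorbing $C_{\ref{ineq:continuity}}/C_{\ref{ineq:smi-monotonicity}}$ together with the interpolation constants.

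This is, up to the binonlinear setting, word for word the argument used for the linear equation in Theorem~\ref{the:convergence-rate} (cf.\ Theorem~4.6 of \cite{LakkisMousavi:21:article:A-least-squares}), and no estimate beyond those already in \S\ref{sec:error-analysis} is needed. I do not expect a genuine obstacle here; the only mild care points are (i) noting that the recovered gradient component $\vec g=\grad u$ lives one Sobolev order below $u$ and hence governs the final rate $h^{\mini{k}\rho}$, and (ii) when $\boundary\W$ is curved, replacing the affine interpolant by its isoparametric analogue and appealing to the remark following Theorem~\ref{the:convergence-rate-inhomogen} to retain the same order. Everything else is routine bookkeeping.
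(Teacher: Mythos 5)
Your argument is exactly the paper's: the authors also prove this by combining the quasi-optimality bound (\ref{ineq:quasi-optimality}) with the standard interpolation error estimate (they cite \citet[Th.~4.4.20]{BrennerScott:08:book:The-mathematical}), the final rate $h^{\mini{k}\rho}$ being dictated by the gradient component, just as in the linear case of Theorem~\ref{the:convergence-rate}. Your write-up merely spells out the interpolant choice and the dominance of the $\grad u$ term, which the paper leaves implicit; there is no gap.
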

\begin{Proof} 
  (\ref{ineq:quasi-optimality}) and the error bound of the interpolation 
  \citep[Th.4.4.20]{BrennerScott:08:book:The-mathematical} demonstrate the claim.
\end{Proof}
\begin{Pro}[error-residual \aposteriori estimates for HJB equation]
  \label{eqn:aposteriori-error-bound-HJB}
  Let $(\fe u_{\fespace u},\vecfe g_{\fespace g})$ be as considered 
  in (\ref{def:limit-approximation}) and 
  $q \in \tilde{\mathcal{N}}(\fe u_{\fespace u},\vecfe g_{\fespace g})$.
  \begin{itemize}[(i)\ ]
  \item
    The following a posteriori residual 
    upper bound holds 
    \begin{multline}
      \label{eqn:upper-error-bound-HJB}
      \Norm{ (u, \grad u)-(\fe u_{\fespace u},\vecfe g_{\fespace g})}_{\sobh1(\W)}^2
      \leq
      \constref{ineq:smi-monotonicity}^{-1}
      \Big(
      \Norm{ \grad \fe u_{\fespace u}- \vecfe g_{\fespace g} }_{\leb2(\W)}^2 
      +
      \Norm{ \rot \vecfe g_{\fespace g}}_{\leb2(\W)}^2
      \\
      +
      \Norm{ \tangentialto\W \vecfe g_{\fespace g} }_{\leb2(\boundary\W)}^2
      +
      \Norm{ \linop M^{q}_{\theta} (\fe u_{\fespace u}, \vecfe g_{\fespace g} ) - f^{q} }_{\leb2(\W)}^2 
      \Big).
    \end{multline}
  \item%
    For any open subdomain $\w \subseteq \Omega$ the following 
    a posteriori residual lower bound holds
    \begin{multline}
      \label{eqn:lower-error-bound-HJB}
      \Norm{ \grad \fe u_{\fespace u}- \vecfe g_{\fespace g} }_{\leb2(\w)}^2 
      +
      \Norm{ \rot \vecfe g_{\fespace g} }_{\leb2(\w)}^2
      +
      \Norm{ \tangentialto\W \vecfe g_{\fespace g} }_{\leb2(\boundary\w \meet \boundary\W)}^2
      \\
      +
      \Norm{ 
      	\linop M^{q}_\theta(\fe u_{\fespace u}, \vecfe g_{\fespace g}) -f^{q} 
      }_{\leb2(\w)}^2  
      \leq
      \constref[\w]{ineq:continuity}
      \Norm{ (u, \grad u)-(\fe u_{\fespace u},\vecfe g_{\fespace g})}_{\sobh1(\w)}^2,
    \end{multline}
    where \constref[\w]{ineq:continuity} is the constant 
    of (\ref{ineq:continuity}) for subdomain $\w \subseteq \W$.
  \end{itemize}
\end{Pro}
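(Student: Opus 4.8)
The plan is to read off both estimates directly from the weak monotonicity and Lipschitz bounds of Lemma~\ref{lem:semi-monotonicity-continuity}, using that the exact pair $(u,\grad u)$ has vanishing least-squares residual. First I would record this vanishing: since $u\in\sobh2(\W)\meet\sobhz1(\W)$ is the strong solution of~(\ref{eq:HJB-homogeneous}), and since $\tilde{\mathcal N}(u,\grad u)=\mathcal N[u]$ is nonempty by (the argument behind) Lemma~\ref{lem:lim-alpha-j}, any $q_u\in\tilde{\mathcal N}(u,\grad u)$ satisfies $\linop M_\theta^{q_u}(u,\grad u)-f^{q_u}=\sup_{\alpha\in\mathcal A}\qp{\linop L^\alpha u-f^\alpha}=\nlop F[u]=0$ almost everywhere, while trivially $\grad u-\grad u=0$, $\rot\grad u=0$, and $\tangentialto\W\grad u=0$ (the last because $\restriction u{\boundary\W}=0$). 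Hence all four terms defining $\hat{E_\theta}$ vanish at $(u,\grad u)$, so the discrete residual genuinely measures the error.

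For the upper bound~(\ref{eqn:upper-error-bound-HJB}) I would invoke the weak monotonicity~(\ref{ineq:smi-monotonicity}) with $(\varphi,\vec\psi)=(u,\grad u)$ and $(\varphi',\vec\psi')=(\fe u_{\fespace u},\vecfe g_{\fespace g})$, the policies being $q_u\in\tilde{\mathcal N}(u,\grad u)$ and the given $q\in\tilde{\mathcal N}(\fe u_{\fespace u},\vecfe g_{\fespace g})$. Substituting the vanishing identities above, the left side of~(\ref{ineq:smi-monotonicity}) collapses exactly to $\Norm{\grad\fe u_{\fespace u}-\vecfe g_{\fespace g}}_{\leb2(\W)}^2+\Norm{\rot\vecfe g_{\fespace g}}_{\leb2(\W)}^2+\Norm{\tangentialto\W\vecfe g_{\fespace g}}_{\leb2(\boundary\W)}^2+\Norm{\linop M^q_\theta(\fe u_{\fespace u},\vecfe g_{\fespace g})-f^q}_{\leb2(\W)}^2$, and dividing through by $\constref{ineq:smi-monotonicity}$ yields~(\ref{eqn:upper-error-bound-HJB}).

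For the lower bound~(\ref{eqn:lower-error-bound-HJB}) I would use the continuity estimate~(\ref{ineq:continuity}), in its form localized to the subdomain $\w$ --- that is, repeating the proof of Lemma~\ref{lem:semi-monotonicity-continuity} (hence of Theorem~\ref{the:Coercivity-continuity}) with $\W$ replaced by $\w$, exactly as in the lower-bound part of Theorem~\ref{the:residual-error-bound} --- applied with $(\varphi,\vec\psi)=(\varphi'',\vec\psi'')=(\fe u_{\fespace u},\vecfe g_{\fespace g})$ and $(\varphi',\vec\psi')=(\varphi''',\vec\psi''')=(u,\grad u)$, with matching policies $q=q''\in\tilde{\mathcal N}(\fe u_{\fespace u},\vecfe g_{\fespace g})$ (the $q$ of the statement) and $q'=q'''\in\tilde{\mathcal N}(u,\grad u)$. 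With these choices each bilinear pairing on the left of~(\ref{ineq:continuity}) becomes a square; using once more the vanishing of the exact residual it equals the sum of the four residual terms over $\w$, while the right side is $\constref[\w]{ineq:continuity}\Norm{(u,\grad u)-(\fe u_{\fespace u},\vecfe g_{\fespace g})}_{\sobh1(\w)}^2$, which is~(\ref{eqn:lower-error-bound-HJB}).

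The argument is short because Lemma~\ref{lem:semi-monotonicity-continuity} does the real work; what needs care is bookkeeping: that $\tilde{\mathcal N}$ is nonempty-valued so the policies $q_u,q$ may be chosen compatibly with the spaces appearing in~(\ref{ineq:smi-monotonicity}) and~(\ref{ineq:continuity}); that $\linop M_\theta^{q_u}(u,\grad u)=\linop L^{q_u}u$, so the exact-residual term is genuinely $\nlop F[u]=0$ and not merely nonpositive; and --- the only mildly delicate step --- that the continuity inequality~(\ref{ineq:continuity}) localizes to $\w$ with a constant depending only on $\w$, carrying no hidden global Poincar\'e or trace constant, mirroring the localization already carried out for Theorem~\ref{the:residual-error-bound}.
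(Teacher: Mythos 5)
Your proposal is correct and follows essentially the same route as the paper's (very terse) proof: the upper bound from the weak monotonicity estimate combined with the vanishing of the exact residual $\linop M_\theta^{q_u}(u,\grad u)-f^{q_u}=0$ for $q_u\in\tilde{\mathcal N}(u,\grad u)$, and the lower bound from the continuity estimate localized to the subdomain $\w$. Your write-up merely makes explicit the bookkeeping (vanishing of $\rot\grad u$ and $\tangentialto\W\grad u$, choice of matching policies) that the paper leaves implicit.
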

\begin{Proof}
  (\ref{ineq:smi-monotonicity}) and the fact that for $q \in \tilde{\mathcal{N}}(u , \grad u)$,
  $\linop M^q_\theta(u, \grad u) -f^q =0$ imply (\ref{eqn:upper-error-bound-HJB}). 
  \\
  One can easily check that (\ref{ineq:continuity}) holds for any subdomain $\w \subseteq \W$ as well, which implies (\ref{eqn:lower-error-bound-HJB}).
\end{Proof}
Lemma \ref{lem:semi-monotonicity-continuity} and consequently 
propositions \ref{pro:convergence-rate-HJB} and \ref{eqn:aposteriori-error-bound-HJB} 
can be easily adapted to the problem with non-zero boundary, $r\neq 0$.
As we see in propositions~\ref{pro:convergence-rate-HJB}, \ref{eqn:aposteriori-error-bound-HJB}, 
the error bounds of discretization for nonlinear problem are similar to the 
error bounds of discretization in linear case which have reported in 
theorems \ref{the:convergence-rate}--\ref{the:residual-error-bound}.
We use the a posteriori residual error bounds of Proposition \ref{eqn:aposteriori-error-bound-HJB} 
as an error indicator in the adaptive scheme. 
\subsection{Implementation and error indicators}
\label{subsec:Implementation}
Practically, corresponding to the problem with zero boundary, at each step of 
the recursive problem, for fixed control map $q$, we find 
$(\fe u_{\fespace u}, \vecfe g_{\fespace g}) \in \fespace u \times \fespace g$ 
such that
\begin{multline}
  \label{eq:discrete-homogen-practical}
  \ltwop{
    \grad \fe u_{\fespace u}-\vecfe g_{\fespace g}
  }{
    \grad \varphi-\vec\psi
  }
  +
  \ltwop{
    \rot\vecfe g_{\fespace g}
  }{
    \rot\vec\psi
  }
  +
  \ltwop{
    \linop{M}_\theta^q(\fe u_{\fespace u} ,\vecfe g_{\fespace g} )
  }{
    \linop{M}_\theta^q(\varphi,\vec\psi)
  }   
  \\ 
  +
  \qa{
    \tangentialto\W \vecfe g_{\fespace g}
    ,
    \tangentialto\W \vec \psi 
  }_{\boundary\W}
  =
  \qa{
    f^q , \linop{M}_\theta^{q}(\varphi , \vec \psi)
  }  
  \Foreach (\varphi, \vec\psi)\in \fespace u \times \fespace g,
\end{multline}
and corresponding to the problem with nonzero boundary ($r \neq 0$), due to 
(\ref{eq:discrete-non-zero-boundary}), we find 
$(\fe u_{\tilde{\fespace u}}, \vecfe g_{\fespace g}) \in \tilde{\fespace u} \times \fespace g$ 
such that
\begin{multline}
  \label{eq:discrete-nonhomogen-practical}
  \ltwop{
    \grad \fe u_{\tilde{\fespace u}} - \vecfe g_{\fespace g}
  }{
    \grad \varphi-\vec\psi
  }
  +
  \ltwop{
    \rot \vecfe g_{\fespace g}
  }{
    \rot\vec\psi
  }
  +
  \ltwop{
    \linop{M}_\theta^q(\fe u_{\tilde{\fespace u}} ,\vecfe g_{\fespace g} )
  }{
    \linop{M}_\theta^q(\varphi,\vec\psi)
  }  
  \\
  +
  \qa{\fe u_{\tilde{\fespace u}} , \varphi}_{\boundary \W} 
  +
  \qa{\tangentialto\W \vecfe g_{\fespace g} , \tangentialto\W \vec \psi}_{\boundary \W}
  =
  \qa{ r, \varphi}_{\boundary \W} 
  \\
  +
  \qa{ \tangentialto\W \grad r, \tangentialto\W \vec \psi}_{\boundary \W}
  +
  \qa{
    f^q, \linop{M}_\theta^{q}(\varphi , \vec \psi)
  }
  \Foreach (\varphi, \vec\psi)\in \tilde{\fespace u} \times \fespace g.
\end{multline}
Accordingly, for each $\mathit{K} \in \mesh T$, we consider the local error indicator by 
\begin{equation}
  \label{def:error-estimator-general}
  \eta^2(\mathit{K}):=
  \begin{cases}
    \Norm{ \grad \fe u_{\fespace u}- \vecfe g_{\fespace g} }_{\leb2(\mathit{K})}^2
    +
    \Norm{ \rot \vecfe g_{\fespace g} }_{\leb2(\mathit{K})}^2
    \\
    \quad +
    \Norm{ \linop{M}_\theta^{q}(\fe u_{\fespace u}, \vecfe g_{\fespace g}) -f^q }_{\leb2(\mathit{K})}^2  
    +
    \Norm{
      \tangentialto\W \vecfe g_{\fespace g} %
    }_{\leb2( \boundary \mathit{K} \meet \boundary \W) }^2    
    &\text{if } r=0	
    \vspace*{4mm}
    \\
    \Norm{ \grad \fe u_{\tilde{\fespace u}}- \vecfe g_{\fespace g} }_{\leb2(\mathit{K})}^2
    +
    \Norm{ \rot \vecfe g_{\fespace g} }_{\leb2(\mathit{K})}^2
    +
    \Norm{ \linop{M}_\theta^{q}(\fe u_{\tilde{\fespace u}}, \vecfe g_{\fespace g}) -f^q }_{\leb2(\mathit{K})}^2  
    \\
    \quad
    +
    \Norm{\fe u_{\tilde{\fespace u}} - r }_{\leb2( \boundary \mathit{K} \meet \boundary \W) }^2 
    +
    \Norm{\tangentialto\W \qp{\vecfe g_{\fespace g} - \grad r} 
    }_{\leb2( \boundary \mathit{K} \meet \boundary \W) }^2 
    &\text{if } r\neq0, 
  \end{cases}
\end{equation}
and the global error indicator by 
\begin{equation}
  \eta := \sqrt{\sum_{\mathit{K} \in \mesh T}{\eta^2(\mathit{K})}}.
\end{equation}
\subsection{Approximation of control problem}
As we see in Howard's algorithm, in addition to approximating a linear equation in 
nondivergence form, we also need to solve or approximate a control problem at each 
step of the iteration appropriately. 
In dealing with control problems, we approximate them elementwise on $\mesh T$. 
For each $K \in \mesh T$, and given $(u, \vec g)$, $q(K)$ is evaluated as a member of the set
\begin{equation}
	\underset{\substack{\alpha \in \mathcal{A}}}
	\Argmax  \int_K \left( \linop M_\theta^{\alpha}(u,\vec g) - f^\alpha \right)  .
\end{equation}
This makes $q$ belong to the finite element space of $\mathcal
A$-valued piecewise constants ${\fespace a}:= \poly{0}\qp{\mesh T; \mathcal{A}}$. 
We emphasize that the local control problem may not have a unique solution for each $K$ and thus the control $q$ is one of many possible choices.  But we do guarantee that the algorithm will approximate one of its solutions.

We summarize Howard's algorithm and the adaptive refinement algorithm in the next paragraphs.
\begin{Alg}[Howard least-squares gradient recovery Galerkin HJB solver]
  \label{alg:Howard-practical}
  The following pseudocode summarizes our code.%
  \begin{algorithmic}
    \Require{%
      data of the problem~(\ref{eq:HJB0-inhomogeneous}),
      parameter $\theta\in[0,1]$ to (\ref{op:on-cross-space})%
      mesh $\mesh t$ of domain $\W$, $k$ polynomial degree,
      initial guess $(\tilde u_0,\tildevec g_0)$, tolerance
      $\tol$ and maximum number of iterations $\maxiter$.}
    \Ensure{%
      approximate control map and approximate solution to
      (\ref{eq:HJB0-inhomogeneous}) either with
      $\Norm{(\fe[n+1] u,\vecfe[n+1] g)-(\fe[n] u, \vecfe[n] g)}_{\sobh1(\W)}\lesssim \tol$
      or after $\maxiter$ iterations.}
    \Procedure{Howard-GALS}{data of the problem~(\ref{eq:HJB0-inhomogeneous}), $\theta,\mesh t,k,u_0,\vec g_0,\tol,\maxiter$}
    \State{build the Galerkin spaces
      $\fespace u,\tildefespace u,\fespace g$ as (\ref{def:interpolation-space}) 
      and $\fespace a = \poly{0}\qp{\mesh T; \mathcal{A}}$
    }
    \State{\(\pair{\fe u_0}{\vecfe g_0}\gets\)
      projection of \((\tilde u_0,\tildevec g_0)\)
      onto \(\tildefespace u\times\fespace g\)}
    \State{$n\gets 1$}
    \State{$\res\gets \tol +1$}
    \While{$n<\maxiter$ and $\res>\tol$}
    \For{$K \in\mesh T$}\Comment{build piecewise constants control
      map $\fe[n]q\in\fespace a$}
    \State{%
      \textsf{\textbf{find}} $\fe q_n \in {\fespace a}$ such that
      \begin{equation*}
        \fe q_{n}(K) \in
        \Argmax[\alpha \in \mathcal A]
        \int_K
        \qb{
          \linop M_\theta^{\alpha}(\fe[n-1]u,\vecfe[n-1]g) - f^\alpha
        }
      \end{equation*}
    }
    \EndFor
    \State{%
      \textsf{\textbf{solve}} for
      $(\fe[n]u,\vecfe[n]g)%
      \gets
      (\fe u_{\tilde{\fespace u}},\vecfe g_{\fespace g})$
      satisfying problem~(\ref{eq:discrete-nonhomogen-practical})
      with $q\gets q_n$
    }
    \State{\({\res\gets
        \Normonsobh[\W]{
          \pair{\fe[n]u}{\vecfe[n]g}
          -
          \pair{\fe[n-1]u}{\vecfe[n-1]g}
        }1}\)}
    \State{$n\gets n+1$}
    \EndWhile
    \EndProcedure
  \end{algorithmic} 
\end{Alg}
\begin{Alg}[adaptive least-squares gradient recovery Galerkin HJB solver]
  \label{alg:adaptive}
  \begin{algorithmic}
    \Require
    data of the problem~(\ref{eq:HJB0-inhomogeneous}), %
    refinement fraction $\beta\in\opinter01$, tolerance
    $\tol_{\bf a}$,
    and maximum number of iterations 
    $\maxiter_{\bf a}$.
    \Ensure approximate solution to (\ref{eq:HJB0-inhomogeneous}) either with 
    \\ $\Norm{(u,\grad u)-(\fe[l] u,\vecfe[l] g)}_{\sobh1(\W)} \lesssim \tol_{\bf a}$ or 
    after $\maxiter_{\bf a}$ iterations
    \State construct an initial admissible partition $\mathcal{T}_0$
    \State{$\eta^2 \gets \tol_{\bf a} ^2 + 1$}
    \State{$l \gets 0$}
    \While{$l\leq\maxiter_{\bf a}$ and $\eta^2 >\tol_{\bf a}^2$}
    \State{%
      $(q_l,\fe[l] u,\vecfe[l] g)
      \gets$\Call{Howard-GALS}{data of the problem~(\ref{eq:HJB0-inhomogeneous}), $\theta,\mesh t,k,u_0,\vec g_0,\tol,\maxiter$}
    }
    \Comment{
      the output of the Algorithm~\ref{alg:Howard-practical} with $ \mathcal{T} \gets \mathcal{T}_l$ 
    }
    \For {${K} \in \mathcal{T}_l$}
    \State{%
      compute $\eta({K})^2$ via (\ref{def:error-estimator-general})
    }
    \EndFor
    \State{%
      \textsf{\textbf{estimate}} by computing
      $\eta^2 \gets \sums K{\mathcal{T}_l}\eta^2({K})$
    }%
    \State{%
      sort array
      $\seqs{\eta({K})^2}{K\in\mathcal{T}_l}$
      in decreasing order
    }
    \State{%
      \textsf{\textbf{mark}} the first $\ceil{\beta\card\mathcal{T}_l}$
      elements $K$ with the highest $\eta(K)^2$
    }
    \State{\textsf{\textbf{refine}}
      $\mathcal{T}_l$ ensuring split of all marked elements 
      \State{ $l \gets l+1$}
    } 
    \EndWhile 
  \end{algorithmic}
\end{Alg}
\section{Numerical experiments}
\label{sec:numerical-experiment}
In this section, to test the performance of our method, we report on two 
numerical experiments in a subset of $\R 2$ domains. In both experiments, 
as input Howard's algorithm~\ref{alg:Howard-practical}, we choose
$u_0 = 0$, $\vec g_0 =0$ and 
$\theta =0.5$. We set the stop criterion of the algorithm as either 
$\Norm{(\fe[n+1] u,\vecfe[n+1] g) - (\fe[n] u,\vecfe[n] g)}_{\sobh1(\W)}< 10^{-7}$ 
($\tol = 10^{-7}$) or maximum $8$ iterations ($\maxiter = 8$). 
All implementations were done by using the FEniCS package.

In the first experiment, we aim to test the convergence rate of the method. Thus 
the known solution is considered smooth enough, and the numerical results are 
presented on the uniform mesh. In the second experiment, hence we would like 
to test the performance of the adaptive scheme, the known solution is considered 
near singular. Through comparing the convergence rate via the adaptive with the 
uniform refinement, we observe the efficiency of the adaptive scheme. In both 
experiments, the results of the errors are plotted in logarithmic scale.
\subsection{Test problem with nonzero boundary condition}
\label{test:non-homogeneous-boundary}
We approximate (\ref{eq:HJB0-inhomogeneous})
with $\W= (-1,1) \times(-1,1)$ and
$\mathcal{A}:=\sogroup2$ (the special orthogonal group on $\R2$)
parametrized as
$\reals/2\pic\integers=\clopinter0{2\pic}\ni\alpha\mapsto\expp{\ic\alpha}$
(in matrix form), with the metric
\begin{align}
  d_{\mathcal
    A}(\alpha,\beta)
  &
  :=\norm{\expp{\ic\alpha}-\expp{\ic\beta}}=2(1-\cos(\alpha-\beta)),
  \\
  \label{coef:non-homogen}
  \mat{A}^\alpha(\vec x)
  &
  := 
  \begin{bmatrix}[r]
    \cos\alpha & \sin\alpha
    \\
    -\sin\alpha & \cos\alpha
  \end{bmatrix}
  \begin{bmatrix}
    2 & \fracl12
    \\
    \fracl12 & 1
  \end{bmatrix}
  \begin{bmatrix}[r]
    \cos\alpha & -\sin\alpha
    \\
    \sin\alpha & \cos\alpha
  \end{bmatrix},
  \\
  \label{data:non-homogen}
  \vec b^\alpha
  &
  :=\vec 0, 
  \quad
  c^\alpha
  := 2 - 0.5(\cos(2\alpha) + \sin(2\alpha)),
  \\ 
  f^\alpha
  &
  := \linop L^\alpha u  -(1- \cos(2\alpha - \pic(x_1+x_2))).
\end{align}
where the exact solution is
\begin{equation}
  \label{fun:exact-non-homogen}
  u(\vec x) = \sin(\pic x_1) \sin(\pic x_2) + \sin (\pic(x_1 + x_2)).
\end{equation}

$\mat A^\alpha$ together with $\vec b ^\alpha$, $c ^\alpha$ satisfy
the Cordes condition~(\ref{def:general-Cordes-condition}) with
$\lambda = 1$ and $\varepsilon = 0.45$.  We approximate a solution by
following Howard's algorithm~\ref{alg:Howard-practical}.  Different
measures of the error in the together linear($\poly{1}$) and
quadratic($\poly{2}$) finite element spaces are reported in
Fig.~\ref{fig:rate-convergence-non-homogen}. 
To benchmark this test we use the \indexemph{experimental orders of convergence} (\indexen{EOC}) associated with a numerical experiment
with errors $e_i$ and (uniform) meshsizes $h_i$, $i = 0, \cdots, I$, which is defined by
\begin{equation}
	\EOC := \frac{\log(e_{i+1}/e_i)}{\log(h_{i+1}/h_i)}.
\end{equation}
As we observe, even for a
nonhomogeneous problem, the numerical results confirm the analysis
convergence rate of Proposition~\ref{pro:convergence-rate-HJB}.
\begin{figure}[tbh]
  \begin{center}
    \subfloat[$\poly{1}$ elements]{%
      \includegraphics[scale=.45]{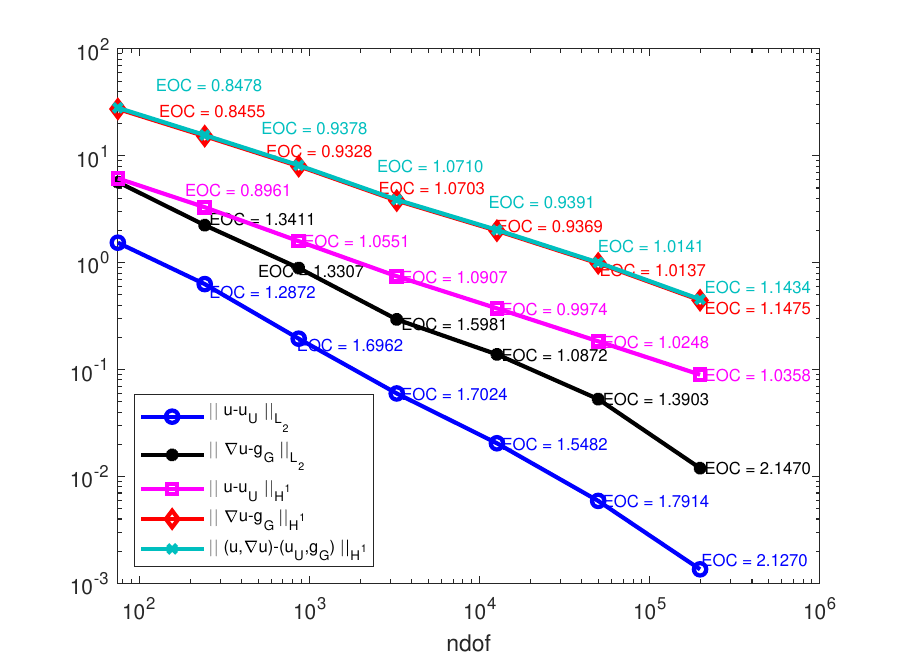}}
    \subfloat[$\poly{2}$ elements]{%
      \includegraphics[scale=.45]{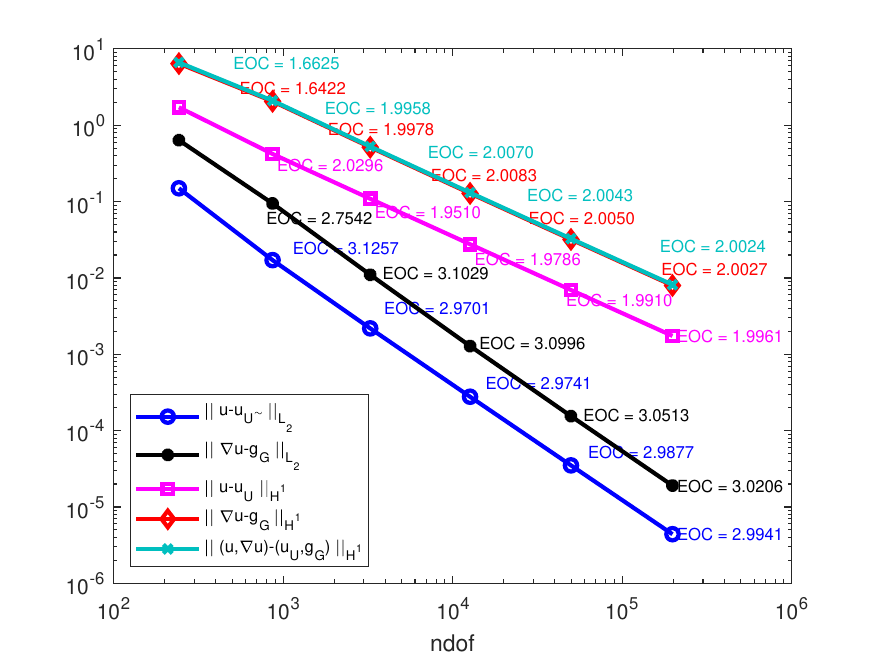}}
    \caption[Rates for Pbm \ref{test:non-homogeneous-boundary}]{%
      Convergence rates for Test problem~\ref{test:non-homogeneous-boundary}. 
    } 
    \label{fig:rate-convergence-non-homogen},
  \end{center} 
\end{figure}
\subsection{Test problem with singular solution}
\label{test:adaptive-disk-domain}
Consider (\ref{eq:HJB-homogeneous}) in the unit disk domain with
the exact solution
\begin{equation}
  \label{fanc:exact-solution-HJB}
  u(\vec x)
  =
  \begin{cases}
    \norm{\vec x}^{5/3} 
    (1-\norm{\vec x})^{5/2}
    (\sin(2\varphi(\vec x)/3))^{5/2}
    &\text{if } 0<  \varphi(\vec x) < 3\pic /2,
    \\
    0
    &\text{otherwise, }
  \end{cases}
\end{equation}
the control set
$\mathcal{A}=\sogroup2\isomorphicto\reals/2\pic\integers=\clopinter0{2\pic}$
and
\begin{equation*}
  \label{coef:test-HJB}
  \mat{A}^\alpha(\vec x) = 
\begin{bmatrix}[r]
  \cos\alpha & \sin\alpha
  \\
  -\sin\alpha & \cos\alpha
\end{bmatrix}
\begin{bmatrix}[r]
  1+(x_1^2 + x_2^2) & 0.005
  \\
  0.005 & 1.01-(x_1^2 + x_2^2)
\end{bmatrix}
\begin{bmatrix}[r]
  \cos\alpha & -\sin\alpha
  \\
  \sin\alpha & \cos\alpha
\end{bmatrix},

\end{equation*}
\begin{equation*}
  \label{data:test-HJB}
  \vec b^\alpha = \vec 0, 
  \quad
  c^\alpha = 0 %
  \quad 
  f^\alpha = \linop L^\alpha u  -(1 - \cos(2\alpha - \pic(x_1+x_2))),
\end{equation*}
where $(\norm{\vec x}, \varphi(\vec x))$ are polar coordinates centered at
the origin. The near degenerate diffusion $\mat A^\alpha$ satisfies
the Cordes condition~(\ref{def:special-Cordes-condition}) with
$\varepsilon = 0.008$. Note that the solution $u$ belongs to
$\sobh{s}(\W)$, for any $s<8/3$. To adaptive refinement, we follow
Algorithm~\ref{alg:adaptive} by choosing $\beta = 0.3$, $\tol_{\bf a}=
10^{-7}$ and $\maxiter_{\bf a}= 8$ and tracking Howard's algorithm to
quadratic($\poly{2}$) finite element space. We let a coarse
quasi-uniform, unstructured mesh as a initial and the mesh generated
by adaptive refinement is shown in
Fig.~\ref{fig:adaptive-uniform-disk-domain}.(A). Since $u\in
\sobh2(\W)$, we do not expect the advantage of the adaptive scheme
over than the uniform refinement for $\sobh1(\W)$-norm error of
$u_{\fespace u}$; it is shown in
Fig.~\ref{fig:adaptive-uniform-disk-domain}.(B). While, since $\vec g
= \grad u$ does not have such smoothness, we observe the superiority
performance of the adaptive scheme rather than the uniform refinement
for $\sobh1(\W)$-norm error of $\vec g_{\fespace g}$ and as well as
$\sobh1(\W)$-norm error of $(u_{\fespace u}, \vec g_{\fespace g})$.
in Fig.~\ref{fig:adaptive-uniform-disk-domain}.(C), (D).
\begin{figure}[h]
  \begin{center}
    \subfloat[]{\includegraphics[scale=.32]{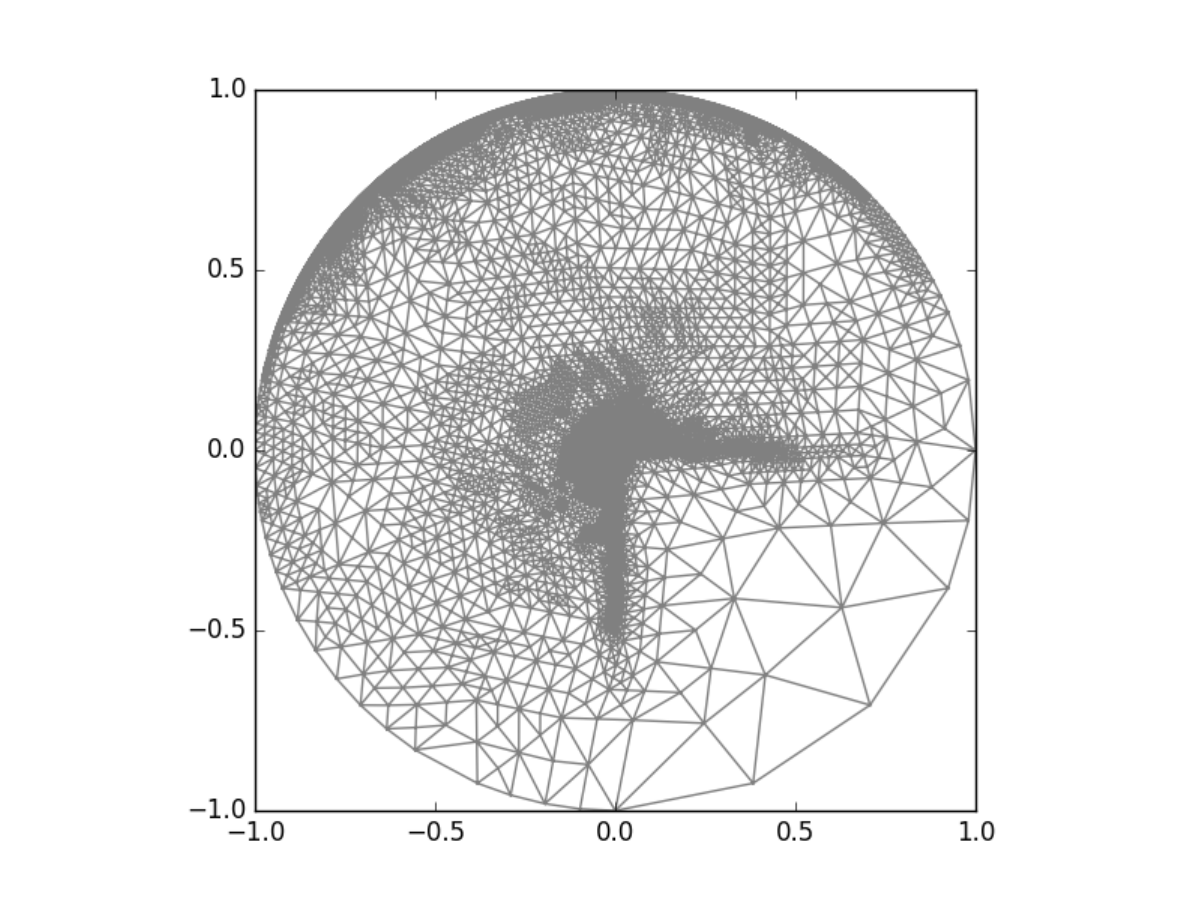}}
    \subfloat[]{\includegraphics[scale=.45]{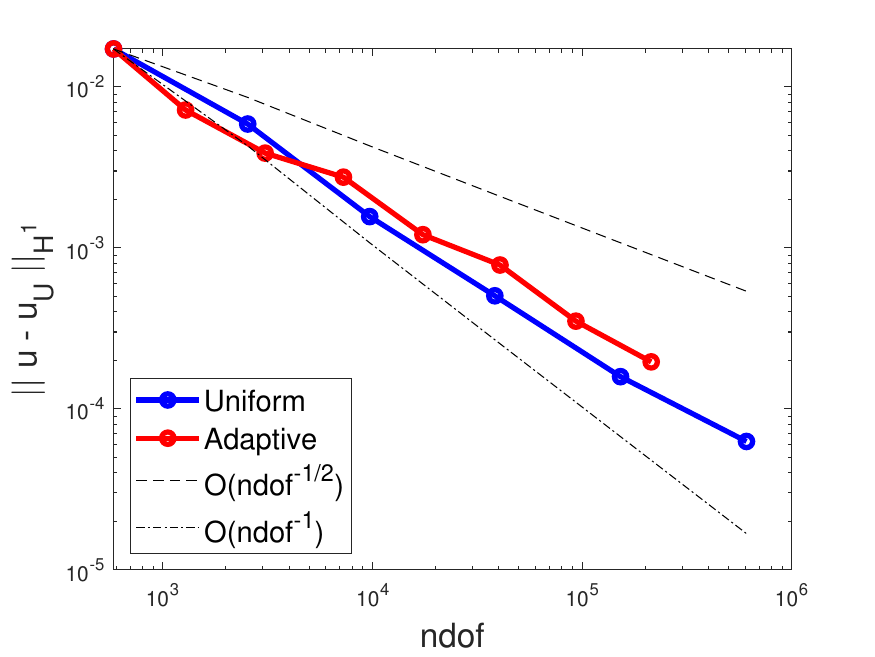}}
    \\
    \subfloat[]{\includegraphics[scale=.45]{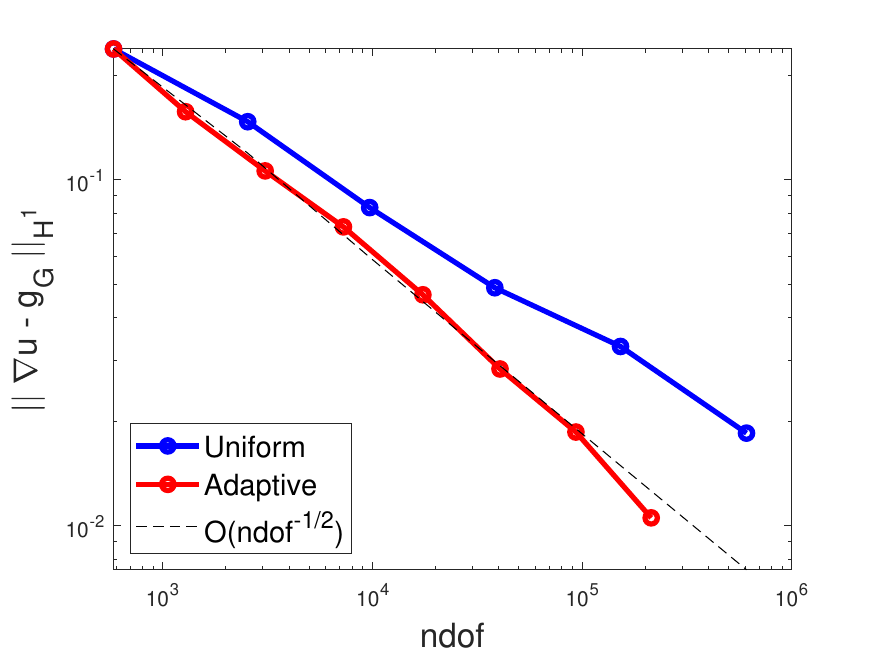}}
    \subfloat[]{\includegraphics[scale=.45]{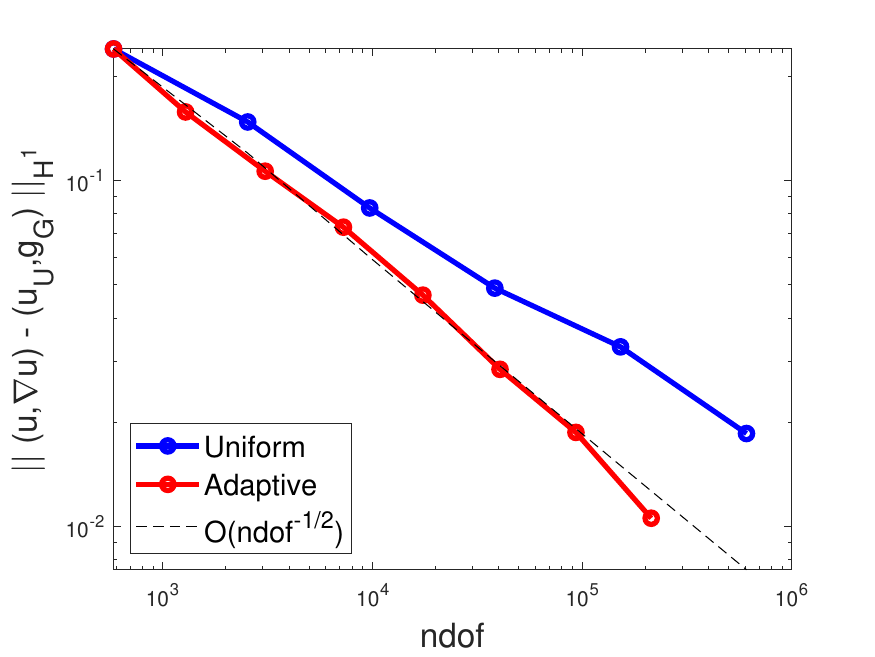}}
    \caption[Adaptive meshes for Pbm \ref{test:adaptive-disk-domain}]{%
      {\scriptsize (A)}: Generated adaptive mesh and {\scriptsize (B), (C), (D)}: convergence 
      rate in the uniform and adaptive refinement for Test problem~\ref{test:adaptive-disk-domain}
      with $\poly{2}$ elements.}
    \label{fig:adaptive-uniform-disk-domain}
  \end{center}
\end{figure} 
\section{Conclusion}
\label{sec:conclusion}
This study efficiently and practically approximated the strong
solution of the fully nonlinear HJB equation in two steps.
First, we linearized the nonlinear HJB equation using the semismooth
Newton method. We demonstrated the Newton differentiability of the HJB
operator from $\sobh2(\W)$ to $\leb2(\W)$ without requiring
ellipticity or the Cordes condition. This suggests that this
linearization method can be applied to approximate the strong solution
of a well-posed HJB equation under less stringent conditions.
Through semismooth Newton linearization, the fully nonlinear HJB
problem transforms into a recursive linear problem in nondivergence
form, with its strong solution converging to the strong solution of
the HJB equation with a superlinear rate.
\newline
With appropriate modifications, one can extend Newton
differentiability and the semismooth Newton linearization to nonconvex
Isaacs equations.

In the second step of the approximation, we discretized the recursive
linear problem in each iteration using a mixed finite element method, employing a least-squares approach with gradient recovery. 
A notable advantage of the least-squares approach is the ability to enforce constraints on the unknowns by incorporating square terms into the cost functional.
However, each discretization that works for the linear equation in nondivergence form can also be applied.

Additionally, we have conducted an error analysis of the approximation, obtaining both \apriori and \aposteriori error bounds. The a priori error bound serves to demonstrate the convergence of the discretization, while the a posteriori error bound facilitates the application of adaptive refinement procedures.
\textnote[AM]{Should we
  also mention here our considerations regarding the error analysis?}
\textnote[OL]{Yes, we need to say something}
\clearpage
\bibliographystyle{abbrvnat}%
\ifthenelse{\boolean{shownotes}}{\printindex}{}
\end{document}